\documentclass[11pt]{amsart}

\usepackage[letterpaper,margin=1in]{geometry}
\usepackage[T1]{fontenc}
\usepackage{lmodern}
\usepackage{microtype}
\usepackage{amsmath,amssymb,amsthm,mathtools}
\usepackage{enumitem}
\usepackage{aliascnt}   
\usepackage{mathrsfs}
\usepackage{etoolbox}
\usepackage{hyperref}
\usepackage[nameinlink,capitalise,noabbrev]{cleveref}

\hypersetup{
  colorlinks=true,
  linkcolor=blue,
  citecolor=blue,
  urlcolor=blue,
  pdftitle={A proof of the noncommutative discrete prime maximal inequality},
  pdfauthor={}
}

\numberwithin{equation}{section}
\setlist[itemize]{leftmargin=1.6em,itemsep=2pt,topsep=4pt}
\setlist[enumerate]{leftmargin=1.8em,itemsep=2pt,topsep=4pt}
\allowdisplaybreaks
\newtheorem{theorem}{Theorem}[section]

\newaliascnt{proposition}{theorem}
\newtheorem{proposition}[proposition]{Proposition}
\aliascntresetthe{proposition}
\crefname{proposition}{Proposition}{Propositions}
\Crefname{proposition}{Proposition}{Propositions}

\newaliascnt{lemma}{theorem}
\newtheorem{lemma}[lemma]{Lemma}
\aliascntresetthe{lemma}
\crefname{lemma}{Lemma}{Lemmas}
\Crefname{lemma}{Lemma}{Lemmas}

\newaliascnt{corollary}{theorem}
\newtheorem{corollary}[corollary]{Corollary}
\aliascntresetthe{corollary}
\crefname{corollary}{Corollary}{Corollaries}
\Crefname{corollary}{Corollary}{Corollaries}

\newaliascnt{remark}{theorem}
\newtheorem{remark}[remark]{Remark}
\aliascntresetthe{remark}
\crefname{remark}{Remark}{Remarks}
\Crefname{remark}{Remark}{Remarks}

\newaliascnt{assumption}{theorem}

\aliascntresetthe{assumption}
\crefname{assumption}{Assumption}{Assumptions}
\Crefname{assumption}{Assumption}{Assumptions}

\newcommand{\Acal}{\mathcal A}
\newcommand{\Mcal}{\mathcal M}
\newcommand{\Ncal}{\mathcal N}
\newcommand{\Tcal}{\mathcal T}
\newcommand{\Rcal}{\mathcal R}
\newcommand{\Pcal}{\mathcal P}

\newcommand{\supp}{\operatorname{supp}}
\newcommand{\Tr}{\operatorname{Tr}}
\newcommand{\e}{\mathrm e}
\newcommand{\1}{\mathbf 1}
\newcommand{\norm}[1]{\left\lVert #1\right\rVert}

\newcommand{\Lp}[2]{L_{#1}\!\left(#2\right)}
\newcommand{\Lpinf}[2]{L_{#1}\!\left(#2;\ell_\infty\right)}

\newcommand{\vphi}{\varphi}
\newcommand{\Z}{\mathbb Z}
\newcommand{\T}{\mathbb T}

\newcommand{\R}{\mathbb R}

\begin{document}
\title[Noncommutative prime ergodic averages]
{Pointwise convergence of noncommutative ergodic averages along the primes}

\author[]{Guixiang Hong}
\address{
Institute for Advanced Study in Mathematics\\
Harbin Institute of Technology\\
Harbin
150001\\
China}
\email{gxhong@hit.edu.cn}

\author[]{Liang Wang}
\address{Department of Mathematics\\ City University of Hong Kong\\Hong Kong SAR}
\email{L.Wang@cityu.edu.hk}

\thanks{}

\subjclass[2010]{Primary  46L51; Secondary 42B20}

\keywords{Noncommutative $L_p$ spaces, noncommutative maximal inequalities, bilateral uniform convergence, ergodic averages along primes, noncommutative quantitative maximal inequality
}

\date{}

\begin{abstract}
Let $\mathcal N$ be a von Neumann algebra equipped with a normal faithful
semifinite trace, and let $\gamma$ be a trace-preserving automorphism of
$\mathcal N$. We consider the ergodic averages along the prime numbers
\[
  A_N(x)
  :=
  \frac1{|P_N|}
  \sum_{q\in P_N}\gamma^q(x),
  \qquad
  P_N:=\{q\leq N:q\ \text{is prime}\}.
\]
For every $1<p<\infty$, we prove a strong maximal inequality for
$(A_N)_{N\geq2}$ on $L_p(\mathcal N)$ and  that $A_N(x)$
converges bilaterally almost uniformly for every
$x\in L_p(\mathcal N)$. 

The proof exploits the circle method and a noncommutative sampling principle. For the convergence result, Bourgain's commutative argument uses pointwise maximal functions
and exceptional sets. These tools are not available in the
noncommutative setting. Instead, we show that the tails of the ergodic averages tend to zero in
$L_2(\mathcal N;\ell_\infty)$ and that the difference from the limit
belongs to $L_2(\mathcal N;c_0)$. This gives the desired b.a.u.
convergence, and provides a positive answer to one question left open in \cite{ChenHongWang+arXiv2024}. 
\end{abstract}

\maketitle


\section{Introduction}

In the previous paper \cite{ChenHongWang+arXiv2024}, the authors initiated
the study of noncommutative analogues of Bourgain's seminal works
\cite{BourgainMaximal,BourgainLp,BourgainArithmetic} on ergodic averages
along arithmetic sequences. However, only a maximal ergodic inequality in
the range 
\[
  p>\frac{1+\sqrt5}{2}
\]
is available, and two problems were left open. The first problem is whether the
noncommutative maximal ergodic inequality could be extended to all $p>1$,
as in Bourgain's work \cite{BourgainArithmetic}. The second was to establish
a noncommutative version of Bourgain's pointwise ergodic theorem
\cite{BourgainMaximal}. The main difficulty comes from the absence of an
underlying notion of \emph{points} in the noncommutative setting. As a
result, Bourgain's arguments based on \emph{maximal functions} and
\emph{exceptional sets} do not appear to transfer directly to the
noncommutative setting.

In the present paper, we solve the
first problem for the sequence of primes, and the second one completely. Let $\mathcal N$ be a von Neumann
algebra equipped with a normal semifinite faithful trace $\tau$, and let
$\gamma$ be a trace-preserving automorphism of $\mathcal N$; that is,
\[
  \tau\circ\gamma=\tau.
\]
For $1\leq p\leq\infty$, let $L_p(\mathcal N)$ be the associated
noncommutative $L_p$ space. The automorphism $\gamma$ extends to an isometry
on every $L_p(\mathcal N)$. We consider the prime averages
\begin{equation*}
  A_N(\cdot)
  :=
  \frac1{|P_N|}
  \sum_{q\in P_N}\gamma^q(\cdot),
  \qquad N\geq2,
\end{equation*}
where
\[
  P_N:=\{q\leq N:q\ \text{is prime}\}.
\]

Our main theorem gives both the maximal inequality and bilaterally almost
uniform convergence in the full range $1<p<\infty$. Any notation not
defined here will be explained in the next section.

\begin{theorem}\label{thm:prime-bau-convergence}
Let $1<p<\infty$. There exists a constant $C_p>0$, depending only on
$p$, such that
\begin{equation*}
  \left\|
    \bigl(A_N(x)\bigr)_{N\geq2}
  \right\|_{L_p(\mathcal N;\ell_\infty)}
  \leq C_p\|x\|_{L_p(\mathcal N)}
\end{equation*}
for every $x\in L_p(\mathcal N)$. Moreover, for every
$x\in L_p(\mathcal N)$, there exists $y\in L_p(\mathcal N)$ such that
\[
  A_N(x)\longrightarrow y
  \qquad\text{b.a.u. as }N\to\infty.
\]
\end{theorem}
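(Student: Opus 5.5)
We will follow Bourgain's circle-method strategy for the prime averages, adapted to the noncommutative setting. By the noncommutative Calderón transference principle (valid for trace-preserving automorphisms and the $L_p(\mathcal N;\ell_\infty)$ norm), it suffices to prove the maximal inequality for the model operators on $\ell_p(\Z;L_p(\mathcal M))$, where $\mathcal M$ is an auxiliary semifinite von Neumann algebra. These operators are convolution with the kernel $K_N=|P_N|^{-1}\sum_{q\in P_N}\delta_q$. Before running the circle method, we replace $|P_N|^{-1}$ by the von Mangoldt weight $\frac1N\sum_{n\le N}\Lambda(n)\delta_n$. Restricting to dyadic $N=2^k$ loses only a constant, since the summands $\gamma^q(x)$ are positive for positive $x$ and we can decompose into positive parts.

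\textbf{Step 1: the multiplier decomposition.} Let $\widehat{K}_{2^k}(\theta)$ be the Fourier multiplier of the dyadic kernel. By the Vinogradov and Siegel--Walfisz estimates we write
\[
\widehat K_{2^k}=\sum_{s\le k^{C}}\ \sum_{a/q\in\mathcal R_s}\frac{\mu(q)}{\phi(q)}\,\widehat\Phi_{2^k}(\theta-a/q)\,\eta_s(\theta-a/q)+\mathcal E_k ,
\]
where $\mathcal R_s$ is the set of reduced fractions with denominator in $[2^s,2^{s+1})$, $\Phi$ is the continuous averaging kernel and $\eta_s$ is a smooth cutoff. The error satisfies $\|\mathcal E_k\|_\infty\lesssim k^{-A}$. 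The error part is controlled in $L_2$ by the square-function bound $\sup_k|T_k f|\le(\sum_k|T_kf|^2)^{1/2}$, which also holds in the noncommutative column sense. For other $p$ we interpolate against the trivial $L_{1+\epsilon}$ and $L_\infty$ bounds of order $k$ per term. The summability $\sum_k k^{-A\theta}k^{1-\theta}<\infty$ then gives boundedness for every $1<p<\infty$.

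\textbf{Step 2: major arcs, the key step.} For fixed $s$ we need
\[
\Bigl\|\sup_k\Bigl|\sum_{a/q\in\mathcal R_s}\widehat\Phi_{2^k}(\cdot-a/q)\eta_s(\cdot-a/q)\widehat f\Bigr|\Bigr\|_{L_p(\ell_\infty)}\lesssim s^{C}\,\|f\|_p
\]
in $L_p(\mathcal M;\ell_\infty)$. In $L_2$ the coefficient $\mu(q)/\phi(q)$ then gives decay $2^{-s(1-\epsilon)}$; interpolation against the polynomial $L_p$ bounds gives summable decay for every $p>1$. This is where the paper's noncommutative sampling principle is used: a Magyar--Stein--Wainger type transference from the $\R$-maximal operator of $\Phi_t$ to the periodized multipliers. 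The $\R$-maximal operator is bounded in $L_p(\ell_\infty)$ by Mei's noncommutative Hardy--Littlewood inequality. The previous paper's restriction $p>(1+\sqrt5)/2$ arose from the loss in this step. To remove it, we will first treat scales with $2^k\le 2^{2^{s}}$ (roughly) through a Rademacher--Menshov or variational argument, losing only $\log$ factors in $s$. For large scales we apply sampling, using Ionescu--Wainger type multi-frequency bounds in their noncommutative form. I expect this step to be the main obstacle: getting the polynomial-in-$s$ loss for $p$ near $1$ without pointwise (Ionescu--Wainger) tools.

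\textbf{Step 3: convergence.} With the maximal inequality in hand, b.a.u. convergence follows once we exhibit a dense class on which convergence holds in the sense of $L_2(\mathcal N;c_0)$. Concretely, we show that for $x$ in a dense subset of $L_2$,
\[
\bigl(A_N(x)-y\bigr)_N\in L_2(\mathcal N;c_0),\qquad \Bigl\|\bigl(A_N x-y\bigr)_{N\ge M}\Bigr\|_{L_2(\ell_\infty)}\to0\ \text{ as } M\to\infty .
\]
We take $y$ to be the limit given by the spectral decomposition: the projection onto rational-spectrum components weighted by $\mu(q)/\phi(q)$. We then decompose $x$ spectrally. For the major-arc pieces the averages are smooth continuous averages, and their differences form a noncommutative variational, hence $c_0$, sequence via Jones--Kaufman--Rosenblatt--Wierdl type square functions known in $L_2(\ell_\infty)$. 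The minor-arc tails are small by Step 1. Finally, the standard noncommutative Banach principle, which combines closedness of $L_p(c_0)$ with the maximal inequality, extends the result to all $x\in L_p$, $1<p<\infty$. For $p<2$ we use density of $L_p\cap L_2$.
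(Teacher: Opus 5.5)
\textbf{There is a genuine gap, and it is where you expected it: Step~2 is not proved.} The tools you propose for it, a noncommutative Ionescu--Wainger theory on large scales and Rademacher--Menshov on small ones, are not established in your proposal, and the argument does not need them.

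Your displayed target also drops the weights $\mu(q)/\varphi(q)$. Because these vary with $q$, you cannot reinsert them in $L_{p_0}$, $p_0<2$, after bounding the coefficient-free maximal operator. Doing so is itself an Ionescu--Wainger multiplier problem, and the trivial bound loses $2^{s}$.

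The missing idea is specific to the primes: the major-arc coefficient depends only on $q$, not on $a$, so you can work one denominator at a time.
\begin{itemize}
\item By M\"obius inversion, $\mathcal R_{q,s,N}=\sum_{d\mid q}\mu(d)\,\mathcal P_{q/d,D_s,N}$. Here $\mathcal P_{q',D,N}$ has symbol $\sum_{b=0}^{q'-1}\nu_N(\xi-b/q')\eta(D(\xi-b/q'))$, a sum over \emph{all} residues.
\item This symbol is exactly the $1/q'$-periodization of $\Phi(q'\xi)$, where $\Phi(\theta)=\nu_N(\theta/q')\eta(D\theta/q')$. On $\mathbb{R}$, the multiplier $\Phi$ is a Ces\`aro average of translation by $1/q'$ composed with a smooth multiplier.
\item The noncommutative maximal ergodic theorem plus the sampling principle therefore give an $L_p(\ell_\infty)$ bound independent of $q'$ and $D$. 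The loss per denominator is only $d(q)\lesssim_\epsilon q^{\epsilon}$, and $\sum_{q\sim 2^s}d(q)/\varphi(q)\lesssim_\epsilon 2^{\epsilon s}$ in every $L_{p_0}$, $p_0>1$.
\item In $L_2$, the enlarged arcs for distinct $q$ are disjoint, so orthogonality and Cauchy--Schwarz give $2^{-\delta s}$.
\item Fix $p_0<p$ and then choose $\epsilon$ small. Interpolation gives geometric decay in $s$ for every $1<p<2$.
\end{itemize}
A subexponential loss suffices because the $L_2$ gain is exponential; no polylogarithmic bound in $s$ is required.

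\textbf{Step~3 also has gaps, though your direct strategy can be rescued.}
\begin{itemize}
\item Step~1 gives summability of the circle-method error only along lacunary scales: $\sum_k k^{-A}<\infty$, but $\sum_N(\log N)^{-A}=\infty$. So ``the minor-arc tails are small'' does not control $\|(\mathcal E_Nx)_{N\ge M}\|_{L_2(\ell_\infty)}$ over all $N$.
\item There is no convenient dense class; for instance, coboundaries do not telescope along primes. What works is approximating the operators for each fixed $x$.
\item To reach all $N$, pass to the grid $\mathbb D_\varepsilon$ via the short-variation bound $\|(B_Nx-B_{N'}x)_N\|_{L_2(\ell_\infty)}\lesssim\varepsilon^{1/4}\|x\|_2$. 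This comes from interpolating the PNT bound $O(\varepsilon)$ in $L_\infty$ with the $L_{3/2}$ maximal inequality.
\item On the grid, split $B_{N'}$ into three parts. The first is the error, which is summable there. The second is the tail $\sum_{s\ge s_0}\mathcal T_{s,N'}$, whose $L_2(\ell_\infty)$-norm is $\lesssim 2^{-\delta s_0}\|x\|_2$ uniformly in $N'$. The third is a finite combination of modulated Ces\`aro averages, which converge in $L_2(c_0)$ by the noncommutative ergodic theorem; JKRW square functions are unnecessary.
\end{itemize}
This yields vanishing $L_2(\ell_\infty)$-tails directly. It is a shortcut compared with the paper, which proves a Bourgain-type oscillation inequality with constant $c_J\to0$ and argues by contradiction.

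Finally, you run the circle method for the weighted kernel but phrase Step~3 for $A_N$. To transfer convergence, and not only the maximal bound, you need the Abel-summation identity $A_N=\sum_{n\le N}\alpha_{N,n}B_n$, whose coefficients are nonnegative Toeplitz coefficients.
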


The range $1<p<\infty$ is optimal. Indeed, LaVictoire \cite{Lavictoire} proved that, already in the
commutative setting, the weak type $(1,1)$ maximal inequality and the
pointwise ergodic theorem for the prime averages fail in general on
$L_1$.

As in the classical case, Theorem \ref{thm:prime-bau-convergence} can be deduced from the corresponding result for the
logarithmically weighted family
\begin{equation*}
  B_N(\cdot)
  :=
  \frac1N
  \sum_{q\in P_N}(\log q)\gamma^q(\cdot),
  \qquad N\geq2.
\end{equation*}

\begin{theorem}\label{thm:weighted-prime-maximal-Lp}
Let $1<p<\infty$.  There exists a constant $C_p>0$, depending only on
$p$, such that
\begin{equation*}
  \left\|
    \bigl(B_N(x)\bigr)_{N\geq2}
  \right\|_{L_p(\mathcal N;\ell_\infty)}
  \leq C_p\|x\|_{L_p(\mathcal N)}
\end{equation*}
for every $x\in L_p(\mathcal N)$.  Moreover, for every
$x\in L_p(\mathcal N)$ there exists $y\in L_p(\mathcal N)$ such that
\[
  B_N(x)\longrightarrow y
  \qquad\text{b.a.u. as }N\to\infty.
\]
\end{theorem}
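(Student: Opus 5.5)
The plan is to prove the maximal inequality by transference plus the circle method, and then get b.a.u. convergence from a density argument in $L_2$ and a tail estimate in $L_2(\mathcal N;\ell_\infty)$.

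\emph{Transference.} By the noncommutative Calderón transference principle, applied to the trace-preserving $\Z$-action generated by $\gamma$ on $\mathcal N$, it suffices to prove the corresponding bound on $L_p(L_\infty(\Z)\bar\otimes\mathcal N;\ell_\infty)$. There the operators are the convolutions $K_N*f$ with
\[
K_N=\frac1N\sum_{q\in P_N}(\log q)\,\delta_q .
\]
By monotonicity, using positive $x$ and the positivity of $K_N$, we may restrict to dyadic $N=2^s$. The intermediate $N$ are controlled by $B_N\le 2B_{2^{s+1}}$ for $2^s\le N<2^{s+1}$.

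\emph{Circle method.} Write $\widehat{K_{2^s}}$ via Vinogradov/Siegel--Walfisz major/minor arc analysis.
\begin{itemize}
\item \emph{Minor arcs.} These give an error $E_s$ with $\|E_s\|_\infty\lesssim s^{-A}$ for every $A$.
\item \emph{Major arcs.} Near $a/q$ with $q\le s^{C}$, we have $\widehat{K_{2^s}}\approx\frac{\mu(q)}{\phi(q)}\,\widehat{\Phi_{2^s}}(\xi-a/q)$, where $\Phi$ is the continuous averaging kernel.
\end{itemize}
Decompose the major-arc part into dyadic scales of denominators $q\sim 2^k$. For each $k$ we need a maximal bound in $s\ge 2^{k/C}$ of the multiplier
\[
\sum_{q\sim2^k}\sum_{(a,q)=1}\frac{\mu(q)}{\phi(q)}\,\widehat{\Phi_{2^s}}(\xi-a/q)\,\eta(2^{Ck}(\xi-a/q)).
\]
This is handled by the noncommutative sampling principle (Magyar--Stein--Wainger type, whose noncommutative version is in the earlier paper). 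It reduces the problem to the noncommutative maximal inequality for the continuous averages, which is Junge--Xu. The extra factor from Ramanujan sums is handled as follows:
\begin{itemize}
\item In $L_2$ it gives decay $\frac{1}{\phi(q)}\lesssim 2^{-k(1-\epsilon)}$.
\item On $L_{p_0}$, for $p_0$ near $1$ or $\infty$, we only lose $2^{k\epsilon}$ or polynomial in $k$. This uses Ionescu--Wainger style multiplier bounds, whose noncommutative version I would take from the operator-valued Ionescu--Wainger theorem, with column/row square functions.
\end{itemize}
Interpolation between these gives geometric decay in $k$ for all $1<p<\infty$. The minor arc error is summed over $s$ in $L_2$ using $\sum_s s^{-2}<\infty$, combined with interpolation against the trivial $L_{p_0}$ bound $\|\sup_s|E_s f|\|\lesssim\sum_s\|E_s f\|$. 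The main obstacle is this last point: obtaining the $L_p$ bound for the minor-arc and high-$k$ pieces when $p$ is close to $1$. The earlier paper was restricted to $p>\frac{1+\sqrt5}{2}$ precisely because its interpolation lacked a logarithmic-loss Ionescu--Wainger bound. I would first try to prove that, for $p\in(1,\infty)$, the projection onto the major arcs at level $2^k$ has operator-valued $L_p$ norm $\lesssim k^{C}$.

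\emph{Convergence.} It suffices to show that $(B_N(x)-y)_N\in L_p(\mathcal N;c_0)$ for $x$ in a dense subset and then use the maximal inequality. For $p=2$, decompose
\[
L_2=\overline{\{x-\gamma x\}}\oplus\mathrm{Fix}(\gamma)
\]
using the spectral measure of $\gamma$ on $L_2(\mathcal N)$.
\begin{itemize}
\item \emph{Spectral side.} Show that the tails $\sup_{N\ge M}\|B_N x-y\|$ in $L_2(\mathcal N;\ell_\infty)$ tend to $0$ as $M\to\infty$. This follows from the same circle-method decomposition: the sum of the major-arc contributions with $q\le Q$ converges, since the rational spectrum $a/q$ gives $\gamma$-eigencomponents with limit $\frac{\mu(q)}{\phi(q)}P_{a/q}x$. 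The remaining pieces have $\ell_\infty$-norm $\lesssim Q^{-\delta}\|x\|_2$ uniformly.
\item \emph{$c_0$ property.} The finite major-arc part lies in $L_2(c_0)$ by the classical continuous ergodic theorem in Junge--Xu's form. Since $L_2(c_0)$ is closed in $L_2(\ell_\infty)$, the whole sequence is in $L_2(\mathcal N;c_0)$.
\end{itemize}
Membership in $L_2(c_0)$ implies b.a.u. convergence. Finally, extend to $L_p$ by density of $L_p\cap L_2$ and the maximal inequality, via the standard noncommutative Banach principle.
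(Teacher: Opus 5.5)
Your maximal argument has the right architecture (transference, positivity reduction to dyadic $N$, major/minor arcs, sampling, $L_2$--$L_{p_0}$ interpolation). However, the step you call the main obstacle is left open. You defer the $L_{p_0}$ bound for the level-$2^k$ major arcs to an operator-valued Ionescu--Wainger theorem that you would still have to prove. That tool is not needed; the missing idea is arithmetic.

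For a single denominator $q$, M\"obius inversion rewrites $\sum_{(a,q)=1}$ as $\sum_{d\mid q}\mu(d)$ times full-residue sums modulo $q/d$. The sampling principle bounds each full-residue maximal operator on $L_{p_0}(\ell_\infty)$ uniformly in the modulus, which costs only a factor $d(q)$ (\Cref{lem:reduced-residues}). Because the prime weights are $\mu(q)/\varphi(q)$, the plain triangle inequality over $Q/2\le q<Q$ then loses only $\sum_q d(q)/\varphi(q)\lesssim_\varepsilon Q^{\varepsilon}$.

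On the $L_2$ side, your $Q^{-1+\varepsilon}$ for the maximal function is not justified: it would require treating all $\sim Q^2$ fractions jointly. But disjointness of the arcs and Cauchy--Schwarz give $\bigl(\sum_q (d(q)/\varphi(q))^2\bigr)^{1/2}\lesssim Q^{-1/2+\varepsilon}$ (\Cref{prop:L2-major-arc-gain}), which is enough: interpolation gives geometric decay in the level for every $1<p<2$. The same crude count shows that your truncated approximant, hence $E_s$, has $L_{p_0}$ norm at most polynomial in $s$. That interpolates against $s^{-A}$ on $L_2$, and $p\ge2$ follows by interpolating with the trivial $L_\infty$ bound.

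The convergence part has a genuine gap at non-lacunary $N$. Your approximation error is controlled only along dyadic $N$: the bounds $\|\widehat K_N-\widehat L_N\|_{L_\infty(\T)}\lesssim(\log N)^{-A}$ are summable along a lacunary sequence but not over all $N\ge M$. So your claim that the remaining pieces have $L_2(\mathcal N;\ell_\infty)$-norm $\lesssim Q^{-\delta}\|x\|_2$ is false for this piece, whose size does not depend on $Q$. Nothing in your argument shows that its tail over all $N\ge M$ tends to $0$. In the maximal part you disposed of non-dyadic $N$ by the sandwich $B_N\le 2B_{2^{s+1}}$. Positivity says nothing about $B_N(x)-y$, and there is no pointwise limsup/liminf argument to fall back on.

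The paper closes exactly this hole in the proof of \Cref{lem:weighted-prime-L2-Cauchy}. It works on the grid $\mathbb D_\varepsilon=\{\lfloor(1+\varepsilon)^r\rfloor\}$, along which $\sum_{N\in\mathbb D_\varepsilon}(\log N)^{-A}<\infty$. It then passes to all $N$ via the prime-number-theorem bound $\|B_N(x)-B_{N'}(x)\|_\infty\lesssim\varepsilon\|x\|_\infty$ for $N'\le N<(1+\varepsilon)N'+1$. This is interpolated against the $L_{3/2}$ maximal inequality to get $\varepsilon^{1/4}$ in $L_2(\mathcal N;\ell_\infty)$. The splitting $\overline{\{x-\gamma x\}}\oplus\mathrm{Fix}(\gamma)$ plays no role for prime averages.

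With that step added, the rest of your convergence argument is sound and genuinely different from the paper's. The arcs with $q\le Q$ are fixed bounded operators composed with twisted averages $\frac1N\sum_{m\le N}e^{2\pi i m a/q}\gamma^m$. After subtracting their limits, these lie in $L_2(\mathcal N;c_0)$ by the Junge--Xu ergodic theorem applied to $\gamma^q$. The levels above $Q$ are uniformly small by the $L_2$ gain, and closedness of $L_2(\mathcal N;c_0)$ finishes. This trades the paper's Bourgain-type oscillation inequality (\Cref{prop:ergodic-grid-oscillation}), with its operator-valued Sobolev lemma and continuous oscillation estimates, for the ordinary noncommutative ergodic theorem.
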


In the commutative setting, Wierdl \cite{Wierdl} established the
equivalence between the convergence of the logarithmically weighted
and unweighted prime averages. The passage from the weighted averages to the unweighted averages for
maximal and variational estimates was later studied in
\cite[Section~5]{MTZK} and \cite[Appendix~A]{EisnerLin}.
In \Cref{sec:proof-main-theorem}, we adapt these classical arguments
to the noncommutative setting. For the maximal inequality, we combine
Abel summation with the characterization of
$L_p(\mathcal N;\ell_\infty)$. For convergence, we use the b.a.u. version of the Toeplitz lemma in
\Cref{lem:bau-toeplitz} to show that the weighted and unweighted
averages have the same limit.

To obtain the maximal inequalities in Theorem \ref{thm:weighted-prime-maximal-Lp}, we follow the strategy in our previous paper \cite{ChenHongWang+arXiv2024} by exploiting the sampling principle (\Cref{lem:sampling}). This considerably simplifies the classical proof in \cite{Wierdl}, see \Cref{rem:advantage-sampling} for more information. The reason why a similar argument as in \cite{ChenHongWang+arXiv2024} allows to get the maximal inequalities for all $1<p\leq \infty$ is an improved approximation estimate \eqref{eq:approximation-error}, and this estimate was established by Wierdl in \cite[(22)]{Wierdl} based on some results from number theory (see e.g. Proposition in \cite[Page 328]{Wierdl}).

To obtain the pointwise convergence result of Theorem \ref{thm:weighted-prime-maximal-Lp}, new ingredients are required compared to commutative argument. Indeed, Bourgain's approach involves not only a quantitative maximal inequality but also an argument based on pointwise defined maximal functions and exceptional sets. The latter argument is used to deduce almost everywhere convergence from quantitative maximal inequality via proof by contradiction. 
In the noncommutative setting, a similar quantitative maximal inequality is obtained in \Cref{prop:ergodic-grid-oscillation}, but the latter argument is not available. Our new observation is that the quantitative maximal inequality is strong enough so that it implies the $L_2(\mathcal M; \ell_\infty)$-norm of $(B_n(x)-B_N(x))_{n\geq N}$ converges to 0 as $N\rightarrow\infty$ for $x\in \mathcal{S}(\mathcal M)$, which in turn yields one $y\in L_2(\mathcal M)$ such that $(B_n(x)-y)_{n\geq 2}\in L_2(\mathcal M; c_0)$ and thus deduces the desired b.a.u. convergence. This result is stronger than the commutative one and thus  provides an alternative proof without involving the 'points'.  The
same method also applies to the polynomial averages studied in
\cite{ChenHongWang+arXiv2024}; see \Cref{rem:chw-pointwise-convergence}.

Note that for random sparse
averages, related b.a.u. convergence results were recently obtained by Le Merdy and Zadeh
\cite{LeMerdyZadeh} using probabilistic methods. But their approach does not apply to ergodic averages along primes or polynomials that are considered in the present paper.

The paper is organized as follows.  \Cref{pre} contains the needed facts
about noncommutative $L_p$ spaces, vector-valued maximal norms, b.a.u.
convergence, elementary number theory, and the major-arc approximation of
the prime multipliers.  In \Cref{sec:weighted-prime-maximal-proof}, we prove
the maximal part of \Cref{thm:weighted-prime-maximal-Lp}.  In
\Cref{sec:weighted-prime-bau-convergence}, we establish a noncommutative
Bourgain-type quantitative maximal inequality and use it to prove b.a.u.
convergence for the weighted averages.  Finally,
\Cref{sec:proof-main-theorem} passes from $B_N$ to $A_N$ and completes the
proof of \Cref{thm:prime-bau-convergence}.

\bigskip

{\bf Notation.}
Throughout the paper, $C$ denotes a positive absolute constant whose value
may change from one occurrence to the next.  When the dependence on
parameters is important, we write, for example, $C_{p,\varepsilon}$.  For
nonnegative quantities $X$ and $Y$, the notation $X\lesssim Y$ means that
$X\leq CY$, while $X\lesssim_{p,\varepsilon}Y$ allows the constant to depend
on the displayed parameters.  We write $X\simeq Y$ when both
$X\lesssim Y$ and $Y\lesssim X$ hold.  In contrast,
\[
  X_N\sim Y_N
  \quad\text{means}\quad
  \lim_{N\to\infty}\frac{X_N}{Y_N}=1.
\]
Thus $\sim$ includes the leading constant, whereas $\simeq$ does not.

\section{Preliminaries}\label{pre}

This section fixes the analytic and arithmetic notation used in the proof.
We first recall noncommutative $L_p$ spaces, then collect the elementary number-theoretic estimates needed
later, and finally describe the circle-method approximation of the prime
kernels.

\subsection{ Noncommutative \texorpdfstring{$L_p$}{Lp} spaces }
\label{subsec:nc-Lp}

Let $(\mathcal{M},\tau)$ be a semifinite von Neumann algebra equipped
with a normal semifinite faithful trace $\tau$. Denote by
$\mathcal{S}(\mathcal{M})_+$ the set of all $x\in\mathcal{M}_+$ such that
\[
  \tau\bigl(\supp(x)\bigr)<\infty,
\]
where $\supp(x)$ denotes the support projection of $x$. The linear span
of $\mathcal{S}(\mathcal{M})_+$ is denoted by
$\mathcal{S}(\mathcal{M})$. It is a weak$^*$-dense $*$-subalgebra of
$\mathcal{M}$.

For $1\le p<\infty$, define
\begin{equation}\label{eq:noncommutative-Lp-norm}
  \|x\|_p
  :=
  \bigl(\tau(|x|^p)\bigr)^{1/p},
  \qquad x\in\mathcal{S}(\mathcal{M}),
\end{equation}
where
\[
  |x|=(x^*x)^{1/2}
\]
is the modulus of $x$. The completion of
$\mathcal{S}(\mathcal{M})$ with respect to the norm in
\eqref{eq:noncommutative-Lp-norm} is called the noncommutative
$L_p$ space associated with $(\mathcal{M},\tau)$ and is denoted by
$L_p(\mathcal{M})$. As usual, we set
\[
  L_\infty(\mathcal{M})=\mathcal{M},
  \qquad
  \|x\|_\infty=\|x\|_{\mathcal{M}}.
\]
We refer to \cite{PX} for the basic theory of noncommutative
$L_p$ spaces.

\subsection{Noncommutative
\texorpdfstring{$\ell_\infty$}{l-infinity}-valued
\texorpdfstring{$L_p$}{Lp} spaces}
\label{subsec:vector-valued-Lp}

Let $1\le p\le\infty$, and let $\Lambda$ be an index set. The space
$L_p(\mathcal{M};\ell_\infty(\Lambda))$ (see e.g. \cite{Pisier+Asterique98, Junge+Crelle2002}) consists of all families
$x=(x_\lambda)_{\lambda\in\Lambda}\subset L_p(\mathcal{M})$ for which
there exist
\[
  a,b\in L_{2p}(\mathcal{M})
  \qquad\text{and}\qquad
  (y_\lambda)_{\lambda\in\Lambda}\subset L_\infty(\mathcal{M})
\]
such that
\begin{equation*}
  x_\lambda=ay_\lambda b,
  \qquad \lambda\in\Lambda,
\end{equation*}
and
\[
  \sup_{\lambda\in\Lambda}\|y_\lambda\|_\infty<\infty.
\]

The norm on $L_p(\mathcal{M};\ell_\infty(\Lambda))$ is defined by
\begin{equation*}
  \|x\|_{L_p(\mathcal{M};\ell_\infty(\Lambda))}
  :=
  \inf_{x_\lambda=ay_\lambda b}
  \left\{
    \|a\|_{2p}
    \sup_{\lambda\in\Lambda}\|y_\lambda\|_\infty
    \|b\|_{2p}
  \right\}.
\end{equation*}
It was shown in \cite{JX} that
$x\in L_p(\mathcal M;\ell_\infty(\Lambda))$ if and only if
\begin{equation*}
  \sup_{\substack{I\subset\Lambda\\ |I|<\infty}}
  \left\|
    (x_\lambda)_{\lambda\in I}
  \right\|_{L_p(\mathcal M;\ell_\infty(I))}
  <\infty.
\end{equation*}
Moreover,
\begin{equation}\label{eq:finite-subset-norm-identity}
  \left\|
    (x_\lambda)_{\lambda\in\Lambda}
  \right\|_{L_p(\mathcal M;\ell_\infty(\Lambda))}
  =
  \sup_{\substack{I\subset\Lambda\\ |I|<\infty}}
  \left\|
    (x_\lambda)_{\lambda\in I}
  \right\|_{L_p(\mathcal M;\ell_\infty(I))}.
\end{equation}

If every $x_\lambda$ is self-adjoint, then
$x\in L_p(\mathcal{M};\ell_\infty(\Lambda))$ if and only if there
exists $a\in L_p(\mathcal{M})_+$ such that
\[
  -a\le x_\lambda\le a,
  \qquad \lambda\in\Lambda.
\]
In this case,
\begin{equation*}
  \left\|
    (x_\lambda)_{\lambda\in\Lambda}
  \right\|_{L_p(\mathcal M;\ell_\infty(\Lambda))}
  =
  \inf
  \left\{
    \|a\|_p:
    a\in L_p(\mathcal M)_+,\,
    -a\le x_\lambda\le a
    \text{ for every }\lambda\in\Lambda
  \right\}.
\end{equation*}
When no confusion can arise, we abbreviate
$L_p(\mathcal{M};\ell_\infty(\Lambda))$ as
$L_p(\mathcal{M};\ell_\infty)$.

Finally, we recall the following interpolation identity.

\begin{lemma}\label{lem:maximal-interpolation}
Let $1\le p_0,p_1\le\infty$ and $0<\theta<1$. Define $p$ by
\begin{equation*}
  \frac{1}{p}
  =
  \frac{1-\theta}{p_0}
  +
  \frac{\theta}{p_1}.
\end{equation*}
Then
\begin{equation*}
  L_p(\mathcal{M};\ell_\infty)
  =
  \bigl(
    L_{p_0}(\mathcal{M};\ell_\infty),
    L_{p_1}(\mathcal{M};\ell_\infty)
  \bigr)_\theta
\end{equation*}
with equivalent norms.
\end{lemma}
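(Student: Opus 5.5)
The plan is to derive \Cref{lem:maximal-interpolation} from Junge's description of $L_p(\mathcal M;\ell_\infty)$ as an asymmetric (two-sided) amalgamated space, using Pisier's interpolation theorem for such products. It suffices to treat a finite index set $\Lambda$: by \eqref{eq:finite-subset-norm-identity} the norms for general $\Lambda$ are suprema over finite subsets. The interpolation bound also passes to the supremum, provided the constants do not depend on $|I|$; this is what the argument will give.

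For finite $\Lambda=\{1,\dots,n\}$, I would use the von Neumann algebra $\ell_\infty^n\bar\otimes\mathcal M$. The factorization definition identifies $L_p(\mathcal M;\ell_\infty^n)$ with the set of $a\,y\,b$, where $a,b\in L_{2p}(\mathcal M)$ act diagonally and $y\in \ell_\infty^n\bar\otimes\mathcal M$. In the language of \cite{JX}, this says $L_p(\mathcal M;\ell_\infty)=L_{2p}(\mathcal M)\,\ell_\infty(L_\infty(\mathcal M))\,L_{2p}(\mathcal M)$, i.e.\ a product of column and row Hardy-type spaces.

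\emph{Contractive inclusion.} The inclusion $(X_{p_0},X_{p_1})_\theta\subset X_p$ comes from a trilinear interpolation argument. Take $x$ in the interpolation space with $\|x\|_\theta<1$, and an analytic family $x(z)$ whose boundary values satisfy $\|x(it)\|_{X_{p_0}}<1$ and $\|x(1+it)\|_{X_{p_1}}<1$. One factorizes each boundary value as $x(j+it)=a_{j,t}\,y_{j,t}\,b_{j,t}$. Then, as in Pisier's proof for $L_p(\ell_\infty)$ in the hyperfinite case (and Junge--Xu in general), one builds outer functions $a(z),b(z)$ with prescribed boundary moduli. This uses the noncommutative Szeg\H{o}/Devinatz factorization for operator-valued functions on the strip, and gives $x(\theta)=a(\theta)y(\theta)b(\theta)$ with $\|a(\theta)\|_{2p}\,\|b(\theta)\|_{2p}\le1$ and $\sup\|y(\theta)\|\le 1$.

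\emph{Reverse inclusion.} The inclusion $X_p\subset(X_{p_0},X_{p_1})_\theta$ is easier. Given $x_\lambda=ay_\lambda b$ with $a,b\ge0$ normalized in $L_{2p}$ (polar decomposition allows positivity), I set
\[
  f(z)=a^{p(\frac{1-z}{p_0}+\frac{z}{p_1})}\,y_\lambda\, b^{p(\frac{1-z}{p_0}+\frac{z}{p_1})}.
\]
On $\Re z=j$ its $X_{p_j}$ norm is at most $1$, and $f(\theta)=x$. This gives the bound with constant $1$.

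\emph{Main obstacle.} The main difficulty is the inclusion $(X_{p_0},X_{p_1})_\theta\subset X_p$, which needs the operator-valued outer factorization; when $p_1=\infty$ or $p_0=1$ it also needs extra care with endpoints. Rather than redo this, I would cite the results of \cite{JX} (and \cite{Pisier+Asterique98} for the hyperfinite case), where this interpolation is established with equivalent norms. The paper's own contribution here is only the reduction to finite $\Lambda$ via \eqref{eq:finite-subset-norm-identity}, together with uniformity of constants in $|I|$.
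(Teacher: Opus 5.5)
Your proposal is in substance the same as the paper's treatment: the paper gives no argument of its own and simply quotes the identity from Junge--Xu for arbitrary index sets, and your plan likewise defers the hard inclusion $\bigl(L_{p_0}(\mathcal M;\ell_\infty),L_{p_1}(\mathcal M;\ell_\infty)\bigr)_\theta\subset L_p(\mathcal M;\ell_\infty)$ to that reference (so, contrary to your last sentence, the paper makes no reduction to finite $\Lambda$). One caveat: passing to finite $\Lambda$ via \eqref{eq:finite-subset-norm-identity} only justifies that inclusion, since restriction to $I$ is contractive on both endpoint spaces; it does not justify the reverse inclusion, which holds only because your analytic family $a^{p(\frac{1-z}{p_0}+\frac{z}{p_1})}y_\lambda b^{p(\frac{1-z}{p_0}+\frac{z}{p_1})}$ works directly for any $\Lambda$, after the usual approximation of $a,b$ by finite-spectrum operators to get continuity on the boundary lines.
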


The interpolation result in \Cref{lem:maximal-interpolation} can be
found in \cite{JX}.

\subsection{Almost uniform convergence and the space
\texorpdfstring{$L_p(\mathcal M;c_0)$}{Lp(M;c0)}}
\label{subsec:bau-c0}

We next recall the noncommutative analogues of almost everywhere
convergence; see e.g. \cite{JX} for more information.   

Let $L_0(\mathcal M)$ denote the space of all
$\tau$-measurable operators affiliated with $\mathcal M$.
A sequence $(x_n)_{n\ge1}\subset L_0(\mathcal M)$ is said to converge
\emph{bilaterally almost uniformly} to $x\in L_0(\mathcal M)$,
abbreviated as
\[
  x_n\longrightarrow x
  \qquad\text{b.a.u.},
\]
if, for every $\varepsilon>0$, there exists a projection
$e\in\mathcal M$ such that
\begin{equation*}
  \tau(e^\perp)<\varepsilon
  \qquad\text{and}\qquad
  \lim_{n\to\infty}\|e(x_n-x)e\|_\infty=0,
\end{equation*}
where $e^\perp=\mathbf 1-e$. 

For a sequence indexed by $\mathbb N$, the space
$L_p(\mathcal M;c_0)$ is the closure in
$L_p(\mathcal M;\ell_\infty)$ of the finite sequences.
Equivalently, $x=(x_n)_{n\ge1}$ belongs to $L_p(\mathcal M;c_0)$ if
and only if it admits a factorization
\begin{equation*}
  x_n=ay_n b,
  \qquad n\ge1,
\end{equation*}
where
\[
  a,b\in L_{2p}(\mathcal M),
  \qquad
  y_n\in L_\infty(\mathcal M),
  \qquad
  \lim_{n\to\infty}\|y_n\|_\infty=0.
\]
The relevance of this space to individual convergence is the implication
\begin{equation}\label{eq:c0-implies-bau}
  (x_n)_{n\ge1}\in L_p(\mathcal M;c_0)
  \quad\Longrightarrow\quad
  x_n\longrightarrow0
  \quad\text{b.a.u.}
\end{equation}
Consequently, if $x\in L_p(\mathcal M)$ and
$(x_n-x)_{n\ge1}\in L_p(\mathcal M;c_0)$, then $x_n\to x$ b.a.u.
This implication will be used repeatedly in
\Cref{sec:weighted-prime-bau-convergence}.

\subsection{Elementary number-theoretic estimates}
\label{subsec:number-theoretic-estimates}

We collect here the elementary arithmetic facts used throughout the paper.
Let
\begin{equation*}
  \pi(N):=\#\{q\le N:q\ \text{is prime}\},
  \qquad
  \vartheta(N):=\sum_{\substack{q\le N\\q\ \mathrm{prime}}}\log q.
\end{equation*}
The classical Chebyshev estimates imply that there exist absolute constants
$c,C>0$ such that
\begin{equation}\label{eq:chebyshev-pi-bounds}
  c\frac{N}{\log N}
  \le \pi(N)
  \le C\frac{N}{\log N},
  \qquad N\ge2,
\end{equation}
and
\begin{equation}\label{eq:chebyshev-theta-upper}
  \vartheta(N)\le CN,
  \qquad N\ge2.
\end{equation}
We shall also use the prime number theorem in either of the equivalent forms
\begin{equation}\label{eq:PNT-pi-form}
  \pi(N)\sim\frac{N}{\log N}
  \qquad\text{and}\qquad
  \vartheta(N)\sim N
  \quad\text{as }N\to\infty.
\end{equation}
See \cite[Chapters~4 and~6]{Apostol} or
\cite[Sections~22.1--22.2]{HW} for these facts.

We also record the precise asymptotic estimate
\begin{equation}\label{eq:reciprocal-log-estimates}
  \sum_{n=3}^N\frac1{\log n}
  \sim \frac{N}{\log N}.
\end{equation}
For example, this follows from the Stolz--Ces\`aro theorem.  The precise
leading constant is used in
\Cref{sec:proof-weighted-unweighted-equivalence}, while its weaker upper-bound
consequence is used in the maximal estimates below.

Finally, let $\mu$, $\vphi$, and $d$ denote the M\"obius function, Euler's
totient function, and the divisor-counting function, respectively.  Thus
\begin{equation}\label{eq:mobius-function}
  \mu(n)
  :=
  \begin{cases}
    1, & n=1,\\
    (-1)^k, & n \text{ is the product of $k$ distinct primes},\\
    0, & n \text{ is not square-free},
  \end{cases}
\end{equation}

\begin{equation}\label{eq:euler-totient-function}
  \vphi(n):=\#\{1\le a\le n:(a,n)=1\},
\end{equation}
and
\begin{equation*}
  d(n):=\#\{1\le d\le n:d|n\}.
\end{equation*}
For every $\varepsilon>0$,
\begin{equation}\label{eq:standard-divisor-totient-estimates}
  d(n)\lesssim_\varepsilon n^\varepsilon,
  \qquad
  \frac{n}{\vphi(n)}\lesssim_\varepsilon n^\varepsilon,
  \qquad n\ge1.
\end{equation}
 We refer to \cite[Chapter~18]{HW}.  The estimates in
\eqref{eq:standard-divisor-totient-estimates} are used in the summation of the
major-arc pieces in \Cref{lem:arithmetic-sum,prop:L2-major-arc-gain}.

\subsection{Exponential sums and approximate kernels}
\label{subsec:prime-kernels}

We identify $\mathbb T=\mathbb R/\mathbb Z$ with $[0,1]$, with the
endpoints identified. Differences in $\mathbb T$ are taken modulo $1$
and represented in $[-1/2,1/2)$.

For $N\geq2$, define the positive scalar kernel
\begin{equation*}
  K_N(m)
  :=
  \frac{\log m}{N}
  \1_{\{1\le m\le N,\ m\ \mathrm{prime}\}},
  \qquad m\in\mathbb{Z}.
\end{equation*}
The corresponding convolution operator is given by
\[
  K_N*f(n)
  =
  \frac{1}{N}
  \sum_{\substack{1\le m\le N\\ m\ \mathrm{prime}}}
  (\log m)f(n-m).
\]
Let $(\Mcal,\tau)$ be a semifinite von Neumann algebra.  We write
\[
 \Acal=\ell_\infty(\Z)\bar\otimes\Mcal,
 \qquad \Tr_{\Acal}=\#\otimes\tau,
\]
where $\#$ is the count measure.
The discrete maximal estimate that will be established in the following is
\begin{equation*}
  \bigl\|(K_N*f)_{N\ge2}\bigr\|_{\Lpinf p\Acal}
  \le
  C_p\norm{f}_{\Lp p\Acal},
  \qquad 1<p<\infty.
\end{equation*}

For $N\ge1$, let
\begin{equation*}
  \nu_N(\xi)
  :=
  \frac{1}{N}\sum_{m=1}^N \e^{-2\pi i m\xi},
  \qquad \xi\in\mathbb{T}.
\end{equation*}
Choose a smooth cutoff function $\eta$ supported in $(-1/2,1/2)$,
and a slightly larger smooth cutoff function $\widetilde\eta$ such that
\begin{align}\label{eta}
  0\le \widetilde\eta\le1
  \qquad\text{and}\qquad
  \widetilde\eta=1
  \quad\text{on }\supp \eta.
\end{align}
For $s\ge1$, set
\[
  Q_s:=2^{s+1}.
\]
Choose $D_s$ sufficiently large, and define the Fourier multiplier
operators $\Rcal_{q,s,N}$ and $\Tcal_{s,N}$ by
\begin{equation*}
  \widehat{\Rcal_{q,s,N}f}(\xi)
  :=
  \sum_{\substack{1\le a\le q\\(a,q)=1}}
  \nu_N\!\left(\xi-\frac{a}{q}\right)
  \eta\!\left(D_s\!\left(\xi-\frac{a}{q}\right)\right)
  \widehat f(\xi)
\end{equation*}
and
\begin{equation*}
  \Tcal_{s,N}f
  :=
  \sum_{Q_s/2\le q<Q_s}
  \frac{\mu(q)}{\vphi(q)}\Rcal_{q,s,N}f.
\end{equation*}
Here $\mu$ and $\vphi$ are as in
\eqref{eq:mobius-function} and \eqref{eq:euler-totient-function}.

We choose $D_s$ so that the following two conditions
are satisfied:
\begin{enumerate}[label=\textup{(\roman*)}]
  \item $D_s\ge4Q_s$;

  \item the supports of
  \[
    \widetilde\eta\!\left(
      D_s\!\left(\,\cdot-\frac{a}{q}\right)
    \right),
    \qquad
    Q_s/2\le q<Q_s,
    \quad (a,q)=1,
  \]
  are pairwise disjoint.
\end{enumerate}

One can then construct approximate kernels $L_N$ of the form
\begin{equation}\label{eq:LN-decomposition}
  L_N
  =
  L_N^{(0)}+\sum_{s\ge1}\Tcal_{s,N},
\end{equation}
where the Fourier multiplier associated with $L_N^{(0)}$ is
\begin{equation*}
  \widehat{L_N^{(0)}}(\xi)
  =
  \nu_N(\xi)\eta(\xi).
\end{equation*}
Moreover, for every $A>0$, there exists a constant $C_A>0$ such that
\begin{equation}\label{eq:approximation-error}
  \norm{\widehat K_N-\widehat L_N}_{L_\infty(\mathbb T)}
  \leq
  C_A(\log N)^{-A},
  \qquad N\geq2.
\end{equation}
For the original construction and the proof of
\eqref{eq:approximation-error}, see
\cite[pp.~321--334]{Wierdl}.

\section{Maximal inequality for the weighted prime averages}
\label{sec:weighted-prime-maximal-proof}

The aim of this section is to prove the maximal assertion of
\Cref{thm:weighted-prime-maximal-Lp}. The key ingredient is the following
dyadic maximal inequality for the logarithmically weighted prime averages. All the unexplained notation appearing below can be found in the previous section.

\begin{proposition}\label{prop:dyadic-prime-maximal}
Let $1<p<2$. Then
\begin{equation}\label{eq:dyadic-prime-maximal}
  \bigl\|(K_{2^k}*f)_{k\ge1}\bigr\|_{\Lpinf p\Acal}
  \le
  C_p\norm{f}_{\Lp p\Acal}
\end{equation}
for every $f\in L_p(\Acal)$.
\end{proposition}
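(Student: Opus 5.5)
We decompose $K_{2^k}=(K_{2^k}-L_{2^k})+L_{2^k}$ and use \eqref{eq:LN-decomposition} to split
\[
  K_{2^k}*f
  = (K_{2^k}-L_{2^k})*f + L^{(0)}_{2^k}*f + \sum_{s\ge1}\Tcal_{s,2^k}f ,
\]
estimating the three pieces separately in $\Lpinf p\Acal$ for $1<p<2$. For the main term, $L^{(0)}_{2^k}$ is the continuous average $\nu_{2^k}$ smoothly cut off near $0$. Its maximal function is controlled by the noncommutative Hardy--Littlewood maximal inequality on $\Z$ (Junge--Xu/Mei) composed with the bounded multiplier $\eta$. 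It is therefore bounded on $L_p(\Acal;\ell_\infty)$ for all $1<p<\infty$.

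For the error piece $E_k:=(K_{2^k}-L_{2^k})*f$, we dominate the maximal norm by an $\ell_p$-type sum. Since $\|(x_k)\|_{\Lpinf p\Acal}\le(\sum_k\|x_k\|_p^p)^{1/p}$ fails in general, we use instead the crude bound $\|(x_k)\|_{\Lpinf p\Acal}\le \sum_k\|x_k\|_p$. This bound is valid because each single sequence $(0,\dots,x_k,\dots)$ has maximal norm at most $\|x_k\|_p$ (factorize $x_k=u|x_k|^{1/2}\cdot|x_k|^{1/2}$). At $p=2$, Plancherel and \eqref{eq:approximation-error} give $\|E_k\|_2\le C_A k^{-A}\|f\|_2$, which is summable in $k$. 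At $p=1$ (and trivially at $p$ close to $1$), $\|K_{2^k}\|_{\ell_1}+\|L_{2^k}\|_{\ell_1}\lesssim 1+\sum_s(\text{bounded})$. More carefully, I would bound the operator norm of $E_k$ on $L_{p_0}$ by a polynomial in $k$ for $p_0$ near $1$, using $\|\Tcal_{s,N}\|_{p_0\to p_0}$ bounds for $s\lesssim\log k$ and noting that $L_{2^k}$ truncated to $s\le C\log k$ suffices in \eqref{eq:approximation-error}. Interpolating (Riesz--Thorin on $L_p(\Acal)$) gives $\|E_k\|_{p\to p}\lesssim k^{-A\theta}k^{C(1-\theta)}$, which is summable for $A$ large. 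Hence this piece is fine for every $1<p<2$.

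The main obstacle is the major-arc sum $\sum_s\Tcal_{s,2^k}$. For fixed $s$, I would apply the sampling principle (\Cref{lem:sampling}) to $\Rcal_{q,s,N}$. Writing $\xi=a/q+\zeta$, the operator factors as modulation/sampling at scale $q$ of the continuous multiplier $\nu_N(\zeta)\eta(D_s\zeta)$, whose maximal function in $N$ on $L_p(\R;L_p(\Mcal))$ is bounded by the noncommutative Hardy--Littlewood inequality. Sampling then transfers this to $\Z$ with the disjointly supported Ramanujan-type periodic factor $\sum_{q\sim Q_s}\frac{\mu(q)}{\vphi(q)}\sum_{(a,q)=1}\widetilde\eta(D_s(\xi-a/q))$, which is valid because of condition (ii). This yields $\|\sup_k|\Tcal_{s,2^k}f|\|_p\lesssim \|\Pi_s\|_{p\to p}$, where $\Pi_s$ is the periodic multiplier. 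The bound $\|\Pi_s\|_{p\to p}$ comes from a trivial $L_{p_0}$ estimate $\lesssim 2^{s(1+\varepsilon)}$ near $p_0=1$ (via \eqref{eq:standard-divisor-totient-estimates} and kernel size), together with the $L_2$ bound $\max_q|\mu(q)|/\vphi(q)\lesssim 2^{-s(1-\varepsilon)}$, obtained from disjointness of supports. Interpolation gives $2^{-\delta(p)s}$ for $p$ close to $2$ only. I expect this to be the delicate point, and I would use \Cref{lem:arithmetic-sum} and \Cref{prop:L2-major-arc-gain} to obtain the sharper $L_{p_0}$ bound $\lesssim s^{C}$ (via Ramanujan sum structure, i.e., $\Pi_s$ is essentially a convolution with an $\ell_1$-bounded periodic kernel up to divisor factors). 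Interpolation would then give geometric decay $2^{-\delta s}$ for every $1<p<2$. Summing over $s$ with the triangle inequality in $\Lpinf p\Acal$ concludes \eqref{eq:dyadic-prime-maximal}.
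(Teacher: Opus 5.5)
\textbf{There is a genuine gap, in the major-arc step.}

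Your treatment of $L^{(0)}_N$ and of the error term is fine. The truncation at $s\le C\log k$ works, because by condition (ii) the symbol of $\sum_{s>s_1}\Tcal_{s,N}$ is pointwise at most $\sum_{s>s_1}\max_{q\sim Q_s}\vphi(q)^{-1}\lesssim 2^{-(1-\varepsilon)s_1}$. It is also unnecessary: the paper uses the maximal bound for $L_N$ at some exponent $q_0\in(1,p)$, which already gives $\sup_N\|L_N*f\|_{q_0}\lesssim\|f\|_{q_0}$.

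The unsupported step is $\|\sup_k|\Tcal_{s,2^k}f|\|_p\lesssim\|\Pi_s\|_{p\to p}\|f\|_p$. Condition (ii) does give the identity $\Tcal_{s,N}=\mathcal U_{s,N}\Pi_s$, where $\mathcal U_{s,N}$ has symbol
\[
  \sum_{Q_s/2\le q<Q_s}\ \sum_{\substack{1\le a\le q\\(a,q)=1}}
  \nu_N\Bigl(\xi-\frac aq\Bigr)\eta\Bigl(D_s\Bigl(\xi-\frac aq\Bigr)\Bigr).
\]
But you then need $\|(\mathcal U_{s,N}g)_N\|_{\Lpinf p\Acal}\lesssim\|g\|_p$ uniformly in $s$, and nothing available gives this.

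\Cref{lem:sampling} works with a single sampling period $q$, so it controls only the full-residue operators $\Pcal_{q,D,N}$. By contrast, $\mathcal U_{s,N}$ involves about $Q_s^2$ frequencies with many different denominators. Putting them under one period means sampling at $\ell_s=\mathrm{lcm}\{q:Q_s/2\le q<Q_s\}\ge e^{cQ_s}$, which requires $D_s\ge 4\ell_s$. That is incompatible with \eqref{eq:approximation-error}. If $D_s\gg N$ for some $N$ with $Q_s\ll(\log N)^A$, then on $D_s^{-1}<|\xi-a/q|\ll N^{-1}$ one has $\widehat L_N(\xi)\approx0$, while $|\widehat K_N(\xi)|\approx\vphi(q)^{-1}\gtrsim Q_s^{-1}\gg(\log N)^{-A}$. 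So $D_s$ must be subexponential in $Q_s$.

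Summing per-$q$ sampling bounds instead costs $\sum_{q\sim Q_s}d(q)\approx Q_s\log Q_s$, which destroys the gain. This multi-denominator maximal estimate is exactly what Bourgain's logarithmic lemma, or Magyar--Stein--Wainger sampling with a $\log Q_s$ loss, handles in the commutative case; your plan assumes it. Two further points. Your $L_2$ input $\max_q\vphi(q)^{-1}$ is only the sup of the symbol of $\Pi_s$, so it is useless without the factorization. And \Cref{prop:L2-major-arc-gain} is an $L_2$ statement, so it cannot produce an $L_{p_0}$ bound.

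\textbf{How to repair it.} Keep the weights $\mu(q)/\vphi(q)$ outside the maximal function and never factor out a common $\Pi_s$:
\[
  \bigl\|(\Tcal_{s,N}f)_N\bigr\|_{\Lpinf{p_0}\Acal}
  \le\sum_{Q_s/2\le q<Q_s}\frac1{\vphi(q)}
  \bigl\|(\Rcal_{q,s,N}f)_N\bigr\|_{\Lpinf{p_0}\Acal}.
\]
By M\"obius inversion, $\Rcal_{q,s,N}=\sum_{d\mid q}\mu(d)\Pcal_{q/d,D_s,N}$. Each $\Pcal$ is a single-period operator, bounded uniformly by sampling (\Cref{lem:all-residues}). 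This gives growth $\sum_q d(q)/\vphi(q)\lesssim_\varepsilon 2^{\varepsilon s}$ on $L_{p_0}$.

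At $p=2$ the decay must be proved for the maximal operator itself. Write $\Rcal_{q,s,N}f=\Rcal_{q,s,N}P_{q,s}f$, use the orthogonality of the $P_{q,s}f$ (the enlarged arcs are disjoint), and apply Cauchy--Schwarz. This gives $(\sum_q(d(q)/\vphi(q))^2)^{1/2}\lesssim Q_s^{-3/8}$.

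Interpolating these two maximal estimates via \Cref{lem:maximal-interpolation} yields decay $2^{-c_ps}$ for every $1<p<2$. No $s^C$ improvement at $L_{p_0}$ is needed. This is precisely the paper's argument.
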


Inequality \eqref{eq:dyadic-prime-maximal} holds trivially for $2\leq p\leq\infty$ by interpolation with the obvious estimate \eqref{eq:full-discrete-prime-Linfty}. We first show that \Cref{prop:dyadic-prime-maximal} implies the maximal
assertion of \Cref{thm:weighted-prime-maximal-Lp}.

\begin{proof}[Proof of the maximal assertion in
\Cref{thm:weighted-prime-maximal-Lp}.]
We now remove the dyadic restriction and complete the transference to the
ergodic averages. Suppose first that $f\in L_p(\Acal)_+$ and
$2^{k-1}<N\le2^k$. Positivity gives
\begin{equation*}
  0\le K_N*f
  \le \frac{2^k}{N}K_{2^k}*f
  \le 2K_{2^k}*f.
\end{equation*}
The positive-majorant characterization of
$L_p(\Acal;\ell_\infty)$ and \Cref{prop:dyadic-prime-maximal} therefore yield
\begin{equation}\label{eq:full-discrete-prime-maximal-below-two}
  \bigl\|(K_N*f)_{N\ge2}\bigr\|_{\Lpinf p\Acal}
  \le C_p\norm{f}_{\Lp p\Acal},
  \qquad 1<p<2.
\end{equation}
The estimate for arbitrary $f$ follows by decomposing its real and imaginary
parts into their positive and negative parts.

It remains to cover $2\le p<\infty$. By
\eqref{eq:chebyshev-theta-upper},
\[
  \sup_{N\ge2}\norm{K_N}_{\ell_1(\mathbb Z)}<\infty,
\]
and hence
\begin{equation}\label{eq:full-discrete-prime-Linfty}
  \bigl\|(K_N*f)_{N\ge2}\bigr\|_{
    L_\infty(\Acal;\ell_\infty)}
  \le C\norm{f}_{L_\infty(\Acal)}.
\end{equation}
Interpolating \eqref{eq:full-discrete-prime-maximal-below-two}, for any fixed
exponent in $(1,2)$, with \eqref{eq:full-discrete-prime-Linfty}, and using
\Cref{lem:maximal-interpolation}, gives
\begin{equation}\label{eq:full-discrete-prime-maximal}
  \bigl\|(K_N*f)_{N\ge2}\bigr\|_{\Lpinf p\Acal}
  \le C_p\norm{f}_{\Lp p\Acal},
  \qquad 1<p<\infty.
\end{equation}

Finally, the standard noncommutative Calder\'on transference argument,
applied to the orbit of $x$ under $\gamma$ as in
\cite[Section~4.2]{ChenHongWang+arXiv2024}, transfers
\eqref{eq:full-discrete-prime-maximal} to
\[
  \bigl\|
    (B_N(x))_{N\ge2}
  \bigr\|_{L_p(\mathcal N;\ell_\infty)}
  \le
  C_p\|x\|_{L_p(\mathcal N)},
  \qquad 1<p<\infty.
\]
This proves the maximal assertion in
\Cref{thm:weighted-prime-maximal-Lp}.
\end{proof}

The rest of this section is devoted to the proof of
\Cref{prop:dyadic-prime-maximal}.  We use the approximate kernels $L_N$
arising from Wierdl's circle-method construction and the approximation
estimate \eqref{eq:approximation-error}.  The main new ingredient is a
noncommutative treatment of the rational major arcs.  More precisely, the
sampling principle provides estimates uniformly in the denominator, while
$L_2$ orthogonality and interpolation allow us to sum the contributions of the major-arc
.  A comparison with \cite[Lemma~$3'$]{Wierdl} is given in
\Cref{rem:advantage-sampling}.

\subsection{Maximal estimates for the approximate kernels}
\label{subsec:approximate-kernel-maximal}
We shall use the following more general form of the noncommutative
maximal sampling principle in \cite[Lemma~3.5]{ChenHongWang+arXiv2024}. We first introduce the required notation.

Let $q\geq1$ be an integer, and let $\phi$ be a smooth function on
$\mathbb R$ satisfying
\[
  \supp(\phi)
  \subset
  \left(-\frac{1}{2q},\frac{1}{2q}\right).
\]
Define the $1/q$-periodization of $\phi$ by
\[
  \phi_{\mathrm{per}}^q(\xi)
  :=
  \sum_{n\in\mathbb Z}
  \phi\left(\xi-\frac{n}{q}\right).
\]
Related to the Fourier multiplier $T_\phi$  on $L_2(\mathbb R)$ with symbol $\phi$, we introduce $(T_\phi^q)_{\mathrm{dis}}$ which is  the discrete Fourier multiplier
associated with the symbol $\phi_{\mathrm{per}}^q$; when $q=1$, we will omit $q$ and denote it as $(T_\phi)_{\mathrm{dis}}$ with symbol $\phi_{\mathrm{per}}$. Thus, for every
suitable function $f\colon\mathbb Z\to\mathcal M$,
\[
  \sum_{n\in\mathbb Z}
  \bigl((T_\phi^q)_{\mathrm{dis}}f\bigr)(n)
  \mathrm e^{-2\pi i n\xi}
  =
  \phi_{\mathrm{per}}^q(\xi)
  \sum_{n\in\mathbb Z}
  f(n)\mathrm e^{-2\pi i n\xi}.
\]
By the Fourier inversion formula,
\[
  \bigl((T_\phi^q)_{\mathrm{dis}}f\bigr)(n)
  =
  q\sum_{m\in\mathbb Z}
  f(n-mq)\,\phi^\vee(mq).
\]

\begin{lemma}\label{lem:sampling}
Let $1\leq p,r\leq\infty$, let $I$ be a finite set, and, for every
$i\in I$, let $\Lambda_i$ be a parameter set.  Suppose that
$\phi_{i,\lambda}\in C_c^\infty(\mathbb R)$ satisfies
\[
  \supp(\phi_{i,\lambda})
  \subset
  \left(-\frac12,\frac12\right),
  \qquad
  i\in I,\quad \lambda\in\Lambda_i.
\]
  Assume that there exists a constant $C>0$ such
that
\begin{equation*}
  \left(
    \sum_{i\in I}
    \left\|
      \bigl(
        T_{\phi_{i,\lambda}}g
      \bigr)_{\lambda\in\Lambda_i}
    \right\|_{
      L_p\bigl(
        L_\infty(\mathbb R)\overline{\otimes}\mathcal M;
        \ell_\infty
      \bigr)
    }^r
  \right)^{1/r}
  \leq
  C
  \|g\|_{
    L_p\bigl(
      L_\infty(\mathbb R)\overline{\otimes}\mathcal M
    \bigr)
  }
\end{equation*}
for every semifinite von Neumann algebra $\mathcal M$ and every $g\in
  L_p\bigl(
    L_\infty(\mathbb R)\overline{\otimes}\mathcal M
  \bigr).$
Then
\begin{equation*}
  \left(
    \sum_{i\in I}
    \left\|
      \bigl(
        (T_{\phi_{i,\lambda,q}}^q)_{\mathrm{dis}}f
      \bigr)_{\lambda\in\Lambda_i}
    \right\|_{
      L_p\bigl(
        L_\infty(\mathbb Z)\overline{\otimes}\mathcal M;
        \ell_\infty
      \bigr)
    }^r
  \right)^{1/r}
  \leq
  C_p C
  \|f\|_{
    L_p\bigl(
      L_\infty(\mathbb Z)\overline{\otimes}\mathcal M
    \bigr)
  }
\end{equation*}
for every $  f\in
  L_p\bigl(
    L_\infty(\mathbb Z)\overline{\otimes}\mathcal M
  \bigr)$, where 
$  \phi_{i,\lambda,q}(\xi)
  :=
  \phi_{i,\lambda}(q\xi)$ and
the constant $C_{p,r}$ depends only on $p$ and $r$.
\end{lemma}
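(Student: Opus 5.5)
We will prove \Cref{lem:sampling} in two steps: first the case $q=1$, then the general $q$ by a reduction to residue classes modulo $q$. For $q=1$ we follow the noncommutative sampling argument of \cite[Lemma~3.5]{ChenHongWang+arXiv2024}, which goes back to Magyar--Stein--Wainger. The aim is to embed $\ell_p(\Z;L_p(\Mcal))$ into $L_p(L_\infty(\R)\overline{\otimes}\Mcal)$, apply the hypothesis there, and restrict back to the integers.

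For the $q=1$ step, fix a Schwartz function $\psi$ with $\widehat\psi=1$ on $[-1/2,1/2]$ and $\supp\widehat\psi\subset(-1,1)$. For $f:\Z\to\Mcal$ set $F(x)=\sum_{n}f(n)\psi(x-n)$, and let $\chi=\1_{[0,1)}$.
\begin{enumerate}
\item Since $\supp\phi_{i,\lambda}\subset(-1/2,1/2)$, we have $\phi_{i,\lambda}\widehat\psi=\phi_{i,\lambda}$. Hence $T_{\phi_{i,\lambda}}F(x)=\sum_n f(n)\,\phi_{i,\lambda}^\vee(x-n)$, and its restriction to $\Z$ is exactly $(T_{\phi_{i,\lambda}})_{\mathrm{dis}}f$.
\item To pass from values on $\Z$ to an $L_p(\R)$ norm, we write $T_{\phi}F(n)=T_\phi F(n+t)-\int_0^t\partial_x T_\phi F(n+s)\,ds$ for $t\in[0,1)$, or more cleanly $(T_\phi)_{\mathrm{dis}}f(n)=\bigl(T_\phi F\ast\rho\bigr)(n+t)$-type identities. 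The natural choice is to use a second cutoff $\widetilde\psi$ with $\widehat{\widetilde\psi}=1$ on $\supp\widehat\psi$, so that $T_\phi F=T_\phi F\ast\widetilde\psi$. Then
\[
(T_\phi)_{\mathrm{dis}}f(n)=\int_{\R}T_\phi F(y)\,\widetilde\psi(n-y)\,dy=\sum_{k\in\Z}\int_0^1 T_\phi F(y+n-k)\,\widetilde\psi(k-y)\,dy .
\]
\item Each summand is a fixed weighted average, in $y$, of the translate by $n-k$ of $T_\phi F$. Translations are isometries, and the weights $\sup_{y}|\widetilde\psi(k-y)|$ are summable in $k$.
\end{enumerate}
Translation commutes with all the $T_{\phi_{i,\lambda}}$. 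Moreover, by the factorization definition of $L_p(\cdot;\ell_\infty)$, averaging against a positive kernel is bounded on these spaces. (For complex weights we split the weight into its real and imaginary parts.) This gives
\[
\bigl\|((T_{\phi_{i,\lambda}})_{\mathrm{dis}}f)_\lambda\bigr\|_{L_p(\ell_\infty(\Z)\overline{\otimes}\Mcal;\ell_\infty)}\lesssim\bigl\|(T_{\phi_{i,\lambda}}F)_\lambda\bigr\|_{L_p(L_\infty(\R)\overline{\otimes}\Mcal;\ell_\infty)} .
\]
The implicit constant is independent of $i$ and $\lambda$. This is the step I expect to be the main obstacle. We want to move the $\ell_\infty$ norm inside the sum over $k$ and the integral over $y$. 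To do so, we realize the map $G\mapsto \bigl(\int_0^1 G(y+n-k)w_k(y)\,dy\bigr)_n$ as a positive, hence completely bounded, map from $L_p(L_\infty(\R)\overline{\otimes}\Mcal)$ into $L_p(\ell_\infty(\Z)\overline{\otimes}\Mcal)$, applied entrywise, and then use that positive maps extend boundedly to the $\ell_\infty$-valued spaces; see \cite{JX}. Taking $r$-th powers and summing over $i$, the hypothesis yields the bound $C\|F\|_p$. Finally, $\|F\|_{L_p(L_\infty(\R)\overline{\otimes}\Mcal)}\lesssim_p\|f\|_p$ by the same kind of estimate: $F$ is a superposition of translates of $f$ weighted by $\sup_{[0,1]}|\psi(\cdot-k)|$.

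For general $q$, we observe that $(T^q_{\phi_{i,\lambda,q}})_{\mathrm{dis}}f(n)=q\sum_m f(n-mq)\,\phi_{i,\lambda,q}^\vee(mq)$, and that $\phi_{i,\lambda,q}^\vee(mq)=q^{-1}\phi_{i,\lambda}^\vee(m)$. Hence, writing $n=jq+b$ with $0\le b<q$ and $f_b(j)=f(jq+b)$, we get $(T^q_{\phi_{i,\lambda,q}})_{\mathrm{dis}}f(jq+b)=(T_{\phi_{i,\lambda}})_{\mathrm{dis}}f_b(j)$. We now use the identification $\ell_\infty(\Z)\overline{\otimes}\Mcal\cong\ell_\infty(\Z)\overline{\otimes}(\ell_\infty^q\overline{\otimes}\Mcal)$, which sends $f$ to $(f_b)_{b}$. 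Under it, the $q$-operators become the $q=1$ operators with coefficients in $\Mcal'=\ell_\infty^q\overline{\otimes}\Mcal$. Since the hypothesis holds for every semifinite $\Mcal$, in particular for $\Mcal'$, the $q=1$ case applied to $\Mcal'$ gives the claim with a constant independent of $q$.
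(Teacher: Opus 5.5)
Your proof is correct and takes essentially the paper's approach. The paper gives no argument beyond citing \cite[Lemma~3.5]{ChenHongWang+arXiv2024} for $|I|=1$ and noting that the outer $\ell_r$-norm is carried along; your uniform-in-$i$ comparison followed by the hypothesis does exactly that, with the standard Magyar--Stein--Wainger sampling for $q=1$ and a residue-class reduction in $\ell_\infty^q\overline{\otimes}\mathcal M$ that uses the hypothesis over all semifinite algebras. One small fix: ``positive, hence completely bounded'' is not a valid implication in general, but you do not need it, since the majorant characterization of $L_p(\cdot;\ell_\infty)$ for self-adjoint families already gives the bound once you split the (real) weights into positive and negative parts and the families into real and imaginary parts.
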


\begin{proof}
When $I$ contains only one element, this is exactly
\cite[Lemma~3.5]{ChenHongWang+arXiv2024}.  We omit the details for
general $I$, since the proof is almost identical.  The only difference
is that we keep the outer $\ell_r$-norm throughout the argument.
\end{proof}

We now apply the sampling principle to the approximate kernels.  In view
of \eqref{eq:LN-decomposition}, we first estimate the low-frequency term
$L_N^{(0)}$ in \Cref{lem:low-frequency-maximal}, and then control the
rational major-arc pieces $\Tcal_{s,N}$ in
\Cref{lem:summation-over-major-arcs}.

\begin{lemma}\label{lem:low-frequency-maximal}
Let $1<p\le\infty$. Then
\begin{equation}\label{eq:low-frequency-maximal}
  \bigl\|(L_N^{(0)}*f)_{N\ge1}\bigr\|_{\Lpinf p\Acal}
  \le
  C_p\norm{f}_{\Lp p\Acal}.
\end{equation}
\end{lemma}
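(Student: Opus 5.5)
Since the multiplier of $L_N^{(0)}$ is $\nu_N(\xi)\eta(\xi)$ with $\eta$ supported in $(-1/2,1/2)$, the natural route is to compare it with a continuous multiplier and apply \Cref{lem:sampling} with $q=1$ and $|I|=1$. We write $\nu_N(\xi)=\e^{-\pi i (N+1)\xi}\frac{\sin(\pi N\xi)}{N\sin(\pi\xi)}$. On $\supp\eta$, which stays away from $\pm1/2$, this agrees with the continuous average
\[
  m_N(\xi):=\frac1N\int_0^N \e^{-2\pi i t\xi}\,dt
\]
up to an error that can be controlled. Indeed, $\nu_N(\xi)-m_N(\xi)=O(\min(1,(N|\xi|)^{-1})\cdot |\xi|)+O(N^{-1})$ for $|\xi|\le 1/2$. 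The cleanest way to handle this is to replace $\nu_N$ by $m_N$ times the smooth factor $\frac{\pi\xi}{\sin \pi \xi}\e^{-\pi i\xi}$. Up to this factor $\nu_N$ is exactly $m_N$: one has $\nu_N(\xi)=m_N(\xi)\,\e^{-\pi i\xi}\frac{\pi\xi}{\sin\pi\xi}$, which follows from $\frac{1-\e^{-2\pi iN\xi}}{2\pi i N\xi}$ versus $\frac{\e^{-2\pi i\xi}(1-\e^{-2\pi iN\xi})}{N(1-\e^{-2\pi i\xi})}$. This identity makes the comparison exact, with no error term at all.

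Accordingly, we set $\phi_N(\xi):=m_N(\xi)\,\psi(\xi)$, where $\psi(\xi):=\eta(\xi)\e^{-\pi i\xi}\frac{\pi\xi}{\sin \pi\xi}$ is a fixed $C_c^\infty$ function supported in $(-1/2,1/2)$. By the identity above, the periodization of $\phi_N$ coincides with $\nu_N\eta$ on $\T$, so $L_N^{(0)}*f=(T_{\phi_N})_{\mathrm{dis}}f$. Strictly, $\phi_N$ is smooth but not required to be anything else, and $m_N\psi\in C_c^\infty$, so the hypotheses of the lemma are met. By \Cref{lem:sampling} it therefore suffices to prove the continuous maximal inequality
\[
  \bigl\|(T_{\phi_N}g)_{N\ge1}\bigr\|_{L_p(L_\infty(\R)\bar\otimes\Mcal;\ell_\infty)}\le C_p\|g\|_p
\]
for every semifinite $\Mcal$.

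To prove the continuous estimate, we factor $T_{\phi_N}=M_N\circ T_\psi$, where $M_Ng(x)=\frac1N\int_0^N g(x-t)\,dt$ is the one-sided continuous average. The operator $T_\psi$ is convolution with the Schwartz function $\psi^\vee$, so it is bounded on $L_p(L_\infty(\R)\bar\otimes\Mcal)$ for $1\le p\le\infty$ with norm at most $\|\psi^\vee\|_{L_1}$. The family $(M_N)_N$ satisfies the noncommutative Hardy--Littlewood maximal inequality (Mei, or Junge--Xu via transference of the ergodic maximal theorem) on $L_p(L_\infty(\R)\bar\otimes\Mcal;\ell_\infty)$ for $1<p\le\infty$. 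For $p=\infty$ the bound is trivial since $\|\psi^\vee\|_1<\infty$, and restricting the supremum to integer $N$ only lowers the norm. Composing the two bounds gives the continuous estimate, and \Cref{lem:sampling} then yields \eqref{eq:low-frequency-maximal}.

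The main point needing care is the exact identity between $\nu_N\eta$ and the periodization of $m_N\psi$, which removes any need for error estimates. The one step that relies on outside input is the invocation of the noncommutative Hardy--Littlewood maximal inequality on $\R$ with values in an arbitrary $\Mcal$. I expect this to be standard and citable. Failing a direct citation, it can be obtained by comparing $M_N$ with dyadic martingale averages plus a shifted dyadic system.
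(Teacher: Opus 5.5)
Your proof is correct, but it takes a detour that the paper avoids. The paper never leaves $\Z$. There $\nu_N$ is already exactly the symbol of the Ces\`aro average $\frac1N\sum_{m=1}^N\sigma^m$ of the shift $\sigma f(j)=f(j-1)$ on $\Acal$, and the $1$-periodization of $\eta$ is the symbol of a convolution operator $B$ with kernel in $\ell_1(\Z)$. Hence $L_N^{(0)}*f=\frac1N\sum_{m=1}^N\sigma^m(Bf)$, and the Junge--Xu maximal ergodic theorem for the trace-preserving automorphism $\sigma$, together with boundedness of $B$, finishes the proof. No continuous model, no sampling principle, and no comparison of $\nu_N$ with $m_N$ is needed.

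You instead lift to $\R$ via the exact identity $\nu_N(\xi)=m_N(\xi)\,\e^{-\pi i\xi}\frac{\pi\xi}{\sin(\pi\xi)}$, which is valid for $|\xi|<1$. The paper uses the same identity in Step~3 of the proof of \Cref{prop:nc-prime-lacunary-oscillation}, but only to bound $\nu_N-\widehat k(N\,\cdot)$. Your use of it makes $\phi_N=m_N\psi$ an admissible symbol for \Cref{lem:sampling} with $q=1$, whose periodization is exactly $\nu_N\eta$. The continuous bound then follows from $\|\psi^\vee\|_{L_1}<\infty$ and the Hardy--Littlewood maximal inequality.

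One small point to add: Mei's theorem concerns centered averages, so for the one-sided $M_N$ you should note that $0\le M_Ng\le 2\cdot\frac1{2N}\int_{x-N}^{x+N}g$ for $g\ge0$, and then split a general $g$ into four positive parts. The paper is equally silent on this in \eqref{eq:continuous-model-global-maximal}. Alternatively, for integer $N$ one has $M_N=\frac1N\sum_{m=0}^{N-1}\sigma_1^mM_1$ with $\sigma_1$ the unit translation on $\R$. This shows that your continuous step is just the paper's ergodic argument transplanted to $\R$.

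So the paper's route is shorter and uses a single discrete input. Yours buys uniformity: the low-frequency term becomes the $q=1$ case of the sampling framework used for the major arcs in \Cref{lem:all-residues}. The exact factorization would also let one dispense with the error operators $\mathcal D_{a,q,s,N}$ in Step~3, by writing $\Rcal_{a,q,s,N}$ as $\widetilde{\Rcal}_{a,q,s,N}$ composed with an $N$-independent bounded multiplier.
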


\begin{proof}
Let $B$ be the convolution operator on $\Acal$ whose symbol is the 1-periodization of $\eta$ in \eqref{eta}, and let $\sigma$ be the shift on $\Acal$ defined by
\[
  (\sigma f)(j):=f(j-1),
  \qquad j\in\Z.
\]
Since the Fourier transform of  $L_N^{(0)}$ is
$\nu_N\eta$, we have
\begin{equation*}
  L_N^{(0)}*f
  =
  \frac{1}{N}\sum_{m=1}^N\sigma^m(Bf).
\end{equation*}
The shift $\sigma$ is a trace-preserving $*$-automorphism of $\Acal$.
Therefore, the noncommutative maximal ergodic theorem (see \cite{JX}) gives
\begin{equation}\label{eq:shift-maximal-estimate}
  \left\|
    \left(
      \frac{1}{N}\sum_{m=1}^N\sigma^m(Bf)
    \right)_{N\ge1}
  \right\|_{\Lpinf p\Acal}
  \le
  C_p\norm{Bf}_{\Lp p\Acal}.
\end{equation}
Since $\eta$ is smooth, the convolution kernel of $B$ belongs to
$\ell_1(\Z)$. Hence $B$ is bounded on $\Lp p\Acal$, and
\eqref{eq:low-frequency-maximal} follows from
\eqref{eq:shift-maximal-estimate}.
\end{proof}

We next turn to the rational arcs.  It is convenient to begin by summing over
all residue classes modulo $q$.  This removes the coprimality condition and
places the multiplier exactly in the form covered by
\Cref{lem:sampling}.

For $D\ge4q$, define the Fourier multiplier operator
$\Pcal_{q,D,N}$ by
\begin{equation}\label{eq:all-residue-multiplier}
  \widehat{\Pcal_{q,D,N}f}(\xi)
  :=
  \sum_{b=0}^{q-1}
  \nu_N\!\left(\xi-\frac{b}{q}\right)
  \eta\!\left(D\!\left(\xi-\frac{b}{q}\right)\right)
  \widehat f(\xi).
\end{equation}

\begin{lemma}\label{lem:all-residues}
Let $1<p<\infty$. Then
\begin{equation}\label{eq:all-residues-maximal}
  \bigl\|(\Pcal_{q,D,N}f)_{N\ge1}\bigr\|_{\Lpinf p\Acal}
  \le
  C_p\norm{f}_{\Lp p\Acal},
\end{equation}
where $C_p$ is independent of $q$ and $D$.
\end{lemma}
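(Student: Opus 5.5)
The plan is to write $\Pcal_{q,D,N}$ as the discrete operator $(T^q_{\phi_{N,q}})_{\mathrm{dis}}$ produced by \Cref{lem:sampling}, and then to verify the continuous hypothesis of that lemma uniformly in $q$ and $D$.

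\textbf{Step 1: identify the symbol.} Since $\nu_N$ is $1$-periodic, the multiplier in \eqref{eq:all-residue-multiplier} is the $1/q$-periodization of the single bump
\[
  \psi_{N}(\xi):=\nu_N(\xi)\,\eta(D\xi).
\]
Here $\nu_N$ is read as a function on $\R$ via $\nu_N(\xi)=\frac1N\sum_{m=1}^N \e^{-2\pi i m\xi}$. Because $\supp\eta\subset(-1/2,1/2)$ and $D\ge4q$, the bump satisfies $\supp\psi_N\subset(-\frac{1}{8q},\frac1{8q})\subset(-\frac1{2q},\frac1{2q})$. Hence the translates $\psi_N(\cdot-b/q)$, $b\in\Z$, are disjoint, and their sum over $b=0,\dots,q-1$ (with $\xi\in\T$) is exactly $(\psi_N)^q_{\mathrm{per}}(\xi)$.

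To match \Cref{lem:sampling}, write $\psi_N(\xi)=\phi_N(q\xi)$ with
\[
  \phi_N(\zeta):=\nu_N(\zeta/q)\,\eta\bigl(D\zeta/q\bigr).
\]
This $\phi_N$ is smooth with support in $(-1/8,1/8)\subset(-1/2,1/2)$. Consequently $\Pcal_{q,D,N}=(T^q_{\phi_{N,q}})_{\mathrm{dis}}$, and we apply the lemma with $|I|=1$ and $\Lambda=\{N\ge1\}$.

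\textbf{Step 2: the continuous maximal estimate.} It then suffices to prove
\[
  \bigl\|(T_{\phi_N}g)_{N\ge1}\bigr\|_{L_p(L_\infty(\R)\bar\otimes\Mcal;\ell_\infty)}\le C_p\|g\|_p,
\]
uniformly in $q$, $D$ and $\Mcal$. Factor
\[
  T_{\phi_N}=M_N\circ T_{\eta(D\cdot/q)},
  \qquad \widehat{M_N g}(\zeta)=\nu_N(\zeta/q)\,\widehat g(\zeta).
\]
The operator $M_N$ is an average of translations:
\[
  M_N g(x)=\frac1N\sum_{m=1}^N g(x-m/q),
\]
with the sign convention matching the Fourier transform. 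Translation by $1/q$ is a trace-preserving automorphism $\alpha$ of $L_\infty(\R)\bar\otimes\Mcal$, so $M_N g=\frac1N\sum_{m=1}^N\alpha^m g$. The noncommutative maximal ergodic theorem of \cite{JX}, used exactly as in \eqref{eq:shift-maximal-estimate}, then gives a bound $C_p$ independent of $q$.

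The remaining factor $T_{\eta(D\cdot/q)}$ is convolution with an $L_1(\R)$-normalized dilate of $\eta^\vee$. Its norm on $L_p(L_\infty(\R)\bar\otimes\Mcal)$ is therefore at most $\|\eta^\vee\|_{L_1(\R)}$, which is independent of $D/q$. Combining the two factors yields the hypothesis of \Cref{lem:sampling} with $C=C_p\|\eta^\vee\|_1$, and the lemma gives \eqref{eq:all-residues-maximal}.

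\textbf{Main obstacle.} The step needing most care is the bookkeeping in Step 1. One must check that the $1$-periodic factor $\nu_N$ is correctly absorbed: on the support of each translated bump, $\nu_N(\xi-b/q)$ must agree with the dilated single-bump symbol, and the normalization of the periodization in \Cref{lem:sampling} must match the sum over $b$ in \eqref{eq:all-residue-multiplier}.

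A more conceptual alternative is available if the dilation conventions cause trouble. Use $\nu_N(\xi-b/q)=\frac1N\sum_{m}\e^{2\pi i mb/q}\e^{-2\pi i m\xi}$. Summing over $b$ gives $q\,\1_{q\mid m}$, so $\Pcal_{q,D,N}$ equals a projection-type operator composed with ergodic averages along the shift $\sigma$, restricted to multiples of $q$. This route handles the maximal part directly via \cite{JX}, but reaching that form still requires separating off the $\eta(D\cdot)$ factor, which is why I would try the sampling route first.
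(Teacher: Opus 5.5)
Your proposal is correct and is essentially the paper's proof: the paper likewise sets $\Phi_{q,D,N}(\theta)=\nu_N(\theta/q)\eta(D\theta/q)$ and writes $T_{\Phi_{q,D,N}}$ as $\frac1N\sum_{m=1}^N\sigma_{1/q}^m$ applied after the multiplier with symbol $\eta(D\theta/q)$ (bounded by $\|\check\eta\|_{L_1(\R)}$). It then invokes the noncommutative maximal ergodic theorem uniformly in $q$ and $D$, checks via $n=a+q\ell$ and the $1$-periodicity of $\nu_N$ that the $1/q$-periodization of $\Phi_{q,D,N}(q\,\cdot)$ is exactly the symbol of $\Pcal_{q,D,N}$, and concludes by \Cref{lem:sampling} (the disjointness of translates you mention is not needed for that identification, only periodicity and the mod-$1$ convention for differences in $\T$).
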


\begin{proof}
For $\theta\in\R$, define
\begin{equation*}
  \Phi_{q,D,N}(\theta)
  :=
  \nu_N(\theta/q)\eta(D\theta/q).
\end{equation*}
Since $\supp(\eta)\subset(-1/2,1/2)$ and $D\ge4q$, we have
\[
  \supp(\Phi_{q,D,N})
  \subset
  \left(-\frac{q}{2D},\frac{q}{2D}\right)
  \subset
  \left(-\frac12,\frac12\right).
\]

Let $B_{q,D}$ be the Fourier multiplier on
$L_p(L_\infty(\R)\overline{\otimes}\Mcal)$ with symbol
$\eta(D\theta/q)$. Its convolution kernel is a dilation of
$\check\eta$, and consequently
\begin{equation}\label{eq:BqD-bound}
  \norm{B_{q,D}g}_{\Lp p{L_\infty(\R)\overline{\otimes}\Mcal}}
  \le
  \norm{\check\eta}_{L_1(\R)}
  \norm{g}_{\Lp p{L_\infty(\R)\overline{\otimes}\Mcal}}.
\end{equation}

Let $\sigma_{1/q}$ denote translation by $1/q$ on
$L_\infty(\R)\overline{\otimes}\Mcal$; namely,
\[
  (\sigma_{1/q}g)(x)
  :=
  g\!\left(x-\frac{1}{q}\right).
\]
The Fourier multiplier with symbol $\Phi_{q,D,N}$ can then be written as
\begin{equation*}
  T_{\Phi_{q,D,N}}g
  =
  \frac{1}{N}\sum_{m=1}^N\sigma_{1/q}^m(B_{q,D}g).
\end{equation*}
Since $\sigma_{1/q}$ is a trace-preserving $*$-automorphism, the
noncommutative maximal ergodic theorem, together with
\eqref{eq:BqD-bound}, yields
\begin{equation}\label{eq:continuous-Phi-maximal}
  \bigl\|(T_{\Phi_{q,D,N}}g)_{N\ge1}\bigr\|_{
    \Lpinf p{L_\infty(\R)\overline{\otimes}\Mcal}}
  \le
  C_p
  \norm{g}_{\Lp p{L_\infty(\R)\overline{\otimes}\Mcal}}.
\end{equation}
We next identify the discrete multiplier obtained from the sampling lemma.
In the notation of \Cref{lem:sampling},
define
\begin{equation*}
\begin{aligned}
  \Phi_{q,D,N}^{[q]}(\xi)
  =\Phi_{q,D,N}(q\xi)
  =\nu_N(\xi)\eta(D\xi).
\end{aligned}
\end{equation*}
The $1/q$-periodization of $\Phi_{q,D,N}^{[q]}$ is then
\begin{equation*}
\begin{aligned}
  \bigl(\Phi_{q,D,N}^{[q]}\bigr)_{\mathrm{per}}^q(\xi)
  =\sum_{n\in\mathbb Z}
    \Phi_{q,D,N}^{[q]}\!\left(\xi-\frac nq\right)=\sum_{n\in\mathbb Z}\Phi_{q,D,N}(q\xi-n).
\end{aligned}
\end{equation*}
Write every $n\in\mathbb{Z}$ uniquely in the form
\[
  n=a+q\ell,
  \qquad 0\le a\le q-1,
  \quad \ell\in\mathbb Z.
\]
Using the $1$-periodicity of $\nu_N$, we obtain
\begin{equation*}
\begin{aligned}
  \bigl(\Phi_{q,D,N}^{[q]}\bigr)_{\mathrm{per}}^q(\xi)
  &=\sum_{a=0}^{q-1}\sum_{\ell\in\mathbb Z}
    \Phi_{q,D,N}(q\xi-a-q\ell)\\
  &=\sum_{a=0}^{q-1}\sum_{\ell\in\mathbb Z}
    \nu_N\!\left(\xi-\frac aq-\ell\right)
    \eta\!\left(D\left(\xi-\frac aq-\ell\right)\right)\\
  &=\sum_{a=0}^{q-1}
    \nu_N\!\left(\xi-\frac aq\right)
    \sum_{\ell\in\mathbb Z}
    \eta\!\left(D\left(\xi-\frac aq-\ell\right)\right)\\
  &=\sum_{a=0}^{q-1}
    \nu_N\!\left(\xi-\frac aq\right)
    \eta\!\left(D\left(\xi-\frac aq\right)\right).
\end{aligned}
\end{equation*}
This is precisely the multiplier defining $\Pcal_{q,D,N}$ in
\eqref{eq:all-residue-multiplier}.  Therefore,
\begin{equation}\label{eq:sampled-operator-identification}
  (T_{\Phi_{q,D,N}^{[q]}}^q)_{\mathrm{dis}}
  =\Pcal_{q,D,N}.
\end{equation}
Applying \Cref{lem:sampling} to $(\Phi_{q,D,N})_{N\ge1}$ and using
\eqref{eq:continuous-Phi-maximal} and
\eqref{eq:sampled-operator-identification}, we conclude that
\[
  \bigl\|(\Pcal_{q,D,N}f)_{N\ge1}\bigr\|_{\Lpinf p\Acal}
  \le C_{p,\eta}\norm{f}_{\Lp p\Acal}.
\]
This proves \eqref{eq:all-residues-maximal}.
\end{proof}

\begin{remark}\label{rem:advantage-sampling}
\normalfont
The uniformity in $q$ in \Cref{lem:all-residues} is an important
advantage of our approach.  In Wierdl's scalar proof
\cite[Lemma~$3'$]{Wierdl}, the corresponding estimate is obtained through
a delicate decomposition into residue classes and a repeated comparison
between continuous and discrete norms.  By contrast, the sampling principle
in \Cref{lem:sampling} gives the required estimate directly, with a constant
independent of both $q$ and the ambient von Neumann algebra.  This
considerably simplifies the treatment of the rational translates, avoids
the delicate normalization arising in the residue-class decomposition, and
applies at the same time to operator-valued functions.
\end{remark}

\begin{lemma}\label{lem:reduced-residues}
Let $1<p<\infty$. Then
\begin{equation*}
  \bigl\|(\Rcal_{q,s,N}f)_{N\ge1}\bigr\|_{\Lpinf p\Acal}
  \le
  C_p d(q)\norm{f}_{\Lp p\Acal},
\end{equation*}
where $d(q)$ denotes the number of positive divisors of $q$.
\end{lemma}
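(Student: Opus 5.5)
We reduce the reduced-residue multiplier to the all-residue multipliers of \Cref{lem:all-residues} by Möbius inversion over divisors of $q$. The key identity is
\[
  \1_{(a,q)=1}=\sum_{d\mid (a,q)}\mu(d),
\]
applied to the sum over $0\le a\le q-1$. Writing $a=db$ with $d\mid q$ and $0\le b\le q/d-1$, we have $a/q=b/(q/d)$. Hence
\[
  \widehat{\Rcal_{q,s,N}f}(\xi)
  =\sum_{d\mid q}\mu(d)\sum_{b=0}^{q/d-1}
  \nu_N\!\left(\xi-\frac{b}{q/d}\right)
  \eta\!\left(D_s\!\left(\xi-\frac{b}{q/d}\right)\right)\widehat f(\xi).
\]
Here we use that the range $1\le a\le q$ with $(a,q)=1$ may be replaced by $0\le a\le q-1$, since the terms $a=q$ and $a=0$ coincide modulo $1$. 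This identifies
\[
  \Rcal_{q,s,N}=\sum_{d\mid q}\mu(d)\,\Pcal_{q/d,D_s,N}.
\]

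Next we check the hypothesis of \Cref{lem:all-residues} for each piece. We need $D_s\ge 4(q/d)$. Since $q<Q_s$ and $D_s\ge 4Q_s$ by condition (i), we get $D_s\ge 4q\ge 4q/d$. Therefore \Cref{lem:all-residues} applies to every $\Pcal_{q/d,D_s,N}$, with a constant $C_p$ independent of the denominator $q/d$ and of $D_s$.

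Finally, the triangle inequality in $\Lpinf p\Acal$ gives the claim. This is a genuine norm, so for each finite index set of $N$ we have
\[
  \bigl\|(\Rcal_{q,s,N}f)_N\bigr\|_{\Lpinf p\Acal}
  \le\sum_{d\mid q}|\mu(d)|\,\bigl\|(\Pcal_{q/d,D_s,N}f)_N\bigr\|_{\Lpinf p\Acal}
  \le C_p\, d(q)\,\norm{f}_{\Lp p\Acal}.
\]
Taking the supremum over finite index sets via \eqref{eq:finite-subset-norm-identity} extends this to all $N\ge1$.

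The main point to verify is the bookkeeping in the Möbius inversion step. For fixed $d$, the map $b\mapsto db$ must give exactly the residues $a\equiv 0 \pmod d$ in $[0,q-1]$, each once. Then each $d$-piece is literally the multiplier \eqref{eq:all-residue-multiplier} with $q$ replaced by $q/d$ and the same cutoff scale $D_s$. The piece must not involve $\eta(D_s d\,\cdot)$ or some other rescaled cutoff, and it does not, because the cutoff is evaluated at $\xi-a/q=\xi-b/(q/d)$. Once this is confirmed, the uniformity of \Cref{lem:all-residues} in both $q$ and $D$ does the rest.
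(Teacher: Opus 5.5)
Your proposal is correct and follows the paper's proof essentially verbatim. Both arguments use M\"obius inversion to write $\Rcal_{q,s,N}=\sum_{d\mid q}\mu(d)\Pcal_{q/d,D_s,N}$, check that $D_s\ge 4q\ge 4(q/d)$, and finish with the triangle inequality in $\Lpinf p\Acal$ together with the bound from \Cref{lem:all-residues}, which is uniform in both $q$ and $D$.
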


\begin{proof}
The M\"obius inversion identity (see \cite[Chapter~2]{Apostol})
\begin{equation*}
  \sum_{\substack{1\le a\le q\\(a,q)=1}}F(a/q)
  =
  \sum_{d\mid q}\mu(d)
  \sum_{b=0}^{q/d-1}
  F\!\left(\frac{b}{q/d}\right)
\end{equation*}
implies that
\begin{equation}\label{eq:R-as-P-sum}
  \Rcal_{q,s,N}
  =
  \sum_{d\mid q}\mu(d)\Pcal_{q/d,D_s,N}.
\end{equation}
Since $D_s\ge4q\ge4(q/d)$, \Cref{lem:all-residues},
\eqref{eq:R-as-P-sum}, and the triangle inequality give
\begin{equation*}
\begin{aligned}
  \bigl\|(\Rcal_{q,s,N}f)_{N\ge1}\bigr\|_{\Lpinf p\Acal}
  &\le
  \sum_{d\mid q}
  \bigl\|(\Pcal_{q/d,D_s,N}f)_{N\ge1}\bigr\|_{\Lpinf p\Acal}\\
  &\le
  C_p d(q)\norm{f}_{\Lp p\Acal}.
\end{aligned}
\end{equation*}
\end{proof}

To sum the reduced-residue estimates over denominators of comparable size,
we use the elementary bounds collected in
\Cref{subsec:number-theoretic-estimates}.

\begin{lemma}\label{lem:arithmetic-sum}
For every $\varepsilon>0$, there exists a constant $C_\varepsilon>0$
such that
\begin{equation}\label{eq:arithmetic-sum}
  \sum_{Q/2\le q<Q}\frac{d(q)}{\vphi(q)}
  \le
  C_\varepsilon Q^\varepsilon,
  \qquad Q\ge2.
\end{equation}
\end{lemma}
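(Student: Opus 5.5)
The bound follows directly from the estimates in \eqref{eq:standard-divisor-totient-estimates}. Fix $\varepsilon>0$ and set $\delta:=\varepsilon/3$. For each $q$ in the range $Q/2\le q<Q$ we bound the numerator and the denominator separately:
\[
  d(q)\lesssim_\delta q^\delta\le Q^\delta,
  \qquad
  \frac{1}{\vphi(q)}=\frac{q}{\vphi(q)}\cdot\frac1q\lesssim_\delta \frac{q^\delta}{q}\le \frac{2Q^\delta}{Q},
\]
where the last step uses $q\ge Q/2$ and $q<Q$. Hence every summand satisfies
\[
  \frac{d(q)}{\vphi(q)}\lesssim_\delta \frac{Q^{2\delta}}{Q}.
\]

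The number of integers $q$ with $Q/2\le q<Q$ is at most $Q/2+1\le Q$ for $Q\ge2$. Summing the bound over this range gives
\[
  \sum_{Q/2\le q<Q}\frac{d(q)}{\vphi(q)}\lesssim_\delta Q\cdot\frac{Q^{2\delta}}{Q}=Q^{2\delta}\le Q^{\varepsilon},
\]
since $Q\ge 2>1$. The implicit constant depends only on $\delta$, that is, only on $\varepsilon$. This gives \eqref{eq:arithmetic-sum}.

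The argument has no real obstacle. The only point to watch is that the lower bound $q\ge Q/2$ is what allows $1/q$ to be replaced by a multiple of $1/Q$ uniformly over the range. If a sharper bound were wanted, one could instead use $\sum_{q\le Q}d(q)/q\lesssim(\log Q)^2$ together with $q/\vphi(q)\lesssim\log\log q$, but the crude $\varepsilon$-loss is all that is needed here.
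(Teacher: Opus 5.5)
Your proof is correct and is essentially the paper's argument. Both combine $d(q)\lesssim_\delta q^\delta$ with $q/\vphi(q)\lesssim_\delta q^\delta$ to get the termwise bound $d(q)/\vphi(q)\lesssim_\delta Q^{-1+2\delta}$, then sum over the $O(Q)$ integers in $[Q/2,Q)$. The only difference is the choice of parameter: the paper takes $\alpha=\varepsilon/4$ and obtains $Q^{\varepsilon/2}$, whereas you take $\delta=\varepsilon/3$ and obtain $Q^{2\varepsilon/3}$.
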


\begin{proof}
By \eqref{eq:standard-divisor-totient-estimates}, for every $\alpha>0$,
\[
  \frac{d(q)}{\vphi(q)}
  =
  \frac{d(q)}{q}\frac{q}{\vphi(q)}
  \lesssim_\alpha q^{-1+2\alpha}.
\]
Taking $\alpha=\varepsilon/4$ and summing over
$Q/2\le q<Q$, we obtain
\[
  \sum_{Q/2\le q<Q}\frac{d(q)}{\vphi(q)}
  \lesssim_\varepsilon Q^{\varepsilon/2}
  \le
  C_\varepsilon Q^\varepsilon.
\]
This proves \eqref{eq:arithmetic-sum}.
\end{proof}

\begin{lemma}\label{prop:Lp-major-arc-growth}
Let $1<p_0<\infty$ and $\varepsilon>0$. Then
\begin{equation}\label{eq:Lp-major-arc-growth}
  \bigl\|(\Tcal_{s,N}f)_{N\ge1}\bigr\|_{\Lpinf {p_0}\Acal}
  \le
  C_{p_0,\varepsilon}2^{\varepsilon s}
  \norm{f}_{\Lp {p_0}\Acal},
  \qquad s\ge1.
\end{equation}
The constant is independent of $s$ and $N$.
\end{lemma}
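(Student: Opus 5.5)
The estimate follows from the triangle inequality in $\Lpinf{p_0}\Acal$, combined with the reduced-residue bound of \Cref{lem:reduced-residues} and the arithmetic sum of \Cref{lem:arithmetic-sum}.

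By definition,
\[
  \Tcal_{s,N}f=\sum_{Q_s/2\le q<Q_s}\frac{\mu(q)}{\vphi(q)}\Rcal_{q,s,N}f .
\]
The sum over $q$ is finite, and the coefficients $\mu(q)/\vphi(q)$ do not depend on $N$. So the sequence $(\Tcal_{s,N}f)_{N\ge1}$ is a finite linear combination, with scalar coefficients, of the sequences $(\Rcal_{q,s,N}f)_{N\ge1}$. Since $\Lpinf{p_0}\Acal$ is a normed space, the triangle inequality and $|\mu(q)|\le1$ give
\[
  \bigl\|(\Tcal_{s,N}f)_{N\ge1}\bigr\|_{\Lpinf{p_0}\Acal}
  \le\sum_{Q_s/2\le q<Q_s}\frac{1}{\vphi(q)}
  \bigl\|(\Rcal_{q,s,N}f)_{N\ge1}\bigr\|_{\Lpinf{p_0}\Acal}.
\]

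Next we apply \Cref{lem:reduced-residues} with $p=p_0$. Its hypothesis $D_s\ge4q\ge4(q/d)$ holds for every $q<Q_s$ by condition (i) on $D_s$. Its constant $C_{p_0}$ comes from \Cref{lem:all-residues}, so it is independent of $q$, $D_s$ and $N$. Each term is therefore bounded by $C_{p_0}\,d(q)/\vphi(q)\,\norm{f}_{\Lp{p_0}\Acal}$.

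Finally, \Cref{lem:arithmetic-sum} with $Q=Q_s=2^{s+1}$ gives
\[
  \sum_{Q_s/2\le q<Q_s}\frac{d(q)}{\vphi(q)}\le C_\varepsilon 2^{\varepsilon(s+1)} .
\]
Combining the three displays yields the claimed bound with constant $C_{p_0,\varepsilon}=2^{\varepsilon}C_\varepsilon C_{p_0}$.

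There is no real obstacle here. The only point to check is that every constant is uniform in $q$, $D_s$ and $N$. This uniformity is supplied by the sampling principle in \Cref{lem:all-residues}, and the triangle inequality is legitimate because the $q$-sum is finite.
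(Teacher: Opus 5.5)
Your proposal is correct and follows the paper's proof essentially step for step: the triangle inequality over the finite $q$-sum with $|\mu(q)|\le1$, the bound $C_{p_0}d(q)$ (uniform in $q$, $D_s$, $N$) from \Cref{lem:reduced-residues}, and \Cref{lem:arithmetic-sum} with $Q=Q_s=2^{s+1}$, where the extra factor $2^{\varepsilon}$ is absorbed into the constant. Your explicit check that $D_s\ge4Q_s>4q$, which justifies applying \Cref{lem:reduced-residues}, is a detail the paper leaves implicit.
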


\begin{proof}
By the triangle inequality, the definition of $\Tcal_{s,N}$, and
\Cref{lem:reduced-residues},
\begin{equation*}
\begin{aligned}
  \bigl\|(\Tcal_{s,N}f)_{N\ge1}\bigr\|_{\Lpinf {p_0}\Acal}
  &\le
  \sum_{Q_s/2\le q<Q_s}
  \frac{|\mu(q)|}{\vphi(q)}
  \bigl\|(\Rcal_{q,s,N}f)_{N\ge1}\bigr\|_{\Lpinf {p_0}\Acal}\\
  &\le
  C_{p_0}
  \sum_{Q_s/2\le q<Q_s}
  \frac{d(q)}{\vphi(q)}
  \norm{f}_{\Lp {p_0}\Acal}.
\end{aligned}
\end{equation*}
Here we have used $|\mu(q)|\le1$. Applying
\Cref{lem:arithmetic-sum} with $Q=Q_s=2^{s+1}$ gives
\begin{equation*}
  \bigl\|(\Tcal_{s,N}f)_{N\ge1}\bigr\|_{\Lpinf {p_0}\Acal}
  \le
  C_{p_0,\varepsilon}Q_s^\varepsilon
  \norm{f}_{\Lp {p_0}\Acal}
  =
  C_{p_0,\varepsilon}2^{\varepsilon s}
  \norm{f}_{\Lp {p_0}\Acal}.
\end{equation*}
\end{proof}

\begin{lemma}\label{prop:L2-major-arc-gain}
There exist constants $C,\delta>0$ such that
\begin{equation}\label{eq:L2-major-arc-gain}
  \bigl\|(\Tcal_{s,N}f)_{N\ge1}\bigr\|_{\Lpinf 2\Acal}
  \le
  C2^{-\delta s}\norm{f}_{\Lp 2\Acal},
  \qquad s\ge1.
\end{equation}
\end{lemma}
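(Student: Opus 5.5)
The plan is to exploit the disjointness of the supports in condition (ii) and $L_2$ orthogonality, so that the $L_2$ maximal norm of $\Tcal_{s,N}$ is controlled by the worst single denominator. The coefficient $|\mu(q)|/\vphi(q)\lesssim_\varepsilon q^{-1+\varepsilon}\simeq 2^{-(1-\varepsilon)s}$ then decays, while the per-denominator maximal bound is only $d(q)\lesssim 2^{\varepsilon s}$.

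\textbf{Step 1 (reassembling the pieces).} For each $Q_s/2\le q<Q_s$, let $\Pi_q$ be the Fourier multiplier with symbol
\[
  \sum_{(a,q)=1}\widetilde\eta\bigl(D_s(\xi-a/q)\bigr).
\]
By (ii), the projections $\Pi_q$, $Q_s/2\le q<Q_s$, have pairwise disjoint symbols, each symbol takes values in $[0,1]$, and $\Rcal_{q,s,N}=\Rcal_{q,s,N}\Pi_q$ because $\widetilde\eta=1$ on $\supp\eta$. Hence
\[
  \Tcal_{s,N}f=\sum_q \frac{\mu(q)}{\vphi(q)}\Rcal_{q,s,N}(\Pi_q f).
\]

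\textbf{Step 2 (the key $L_2$ maximal estimate).} I need
\[
  \Bigl\|\Bigl(\sum_q g_{q,N}\Bigr)_N\Bigr\|_{L_2(\Acal;\ell_\infty)}
  \lesssim
  \Bigl(\sum_q\|(g_{q,N})_N\|_{L_2(\Acal;\ell_\infty)}^2\Bigr)^{1/2}
\]
when the $g_{q,N}$ have frequency supports that are disjoint in $q$. This is false in general for maximal norms, so instead I would follow the standard route of Bourgain and Wierdl, via the $r=2$ form of \Cref{lem:sampling}. The idea is to write each $\Rcal_{q,s,N}$ through the sampling principle with the $\ell_2$-sum over $i=q$, and then pass to the continuous side, where the relevant operators are modulated copies of one fixed averaging multiplier. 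Concretely, the approach is this:
\begin{itemize}
  \item Factor $\Rcal_{q,s,N}$ as a fixed maximal operator $M_N$ (symbol $\nu_N(\xi)\eta(D_s\xi)$, bounded uniformly via \Cref{lem:all-residues} or \Cref{lem:low-frequency-maximal}) composed with the modulations $e^{2\pi i a n/q}$.
  \item Bound the sup over $N$ by a square function in the index $(a,q)$, using the Rademacher--Menshov/Bourgain $\log$-loss lemma. The number of fractions is $\lesssim Q_s^2=2^{2s+2}$, so the loss is only $\log(\#\text{fractions})^{2}\lesssim s^2$.
  \item Use Plancherel on the square function, which is legitimate because the supports are disjoint.
\end{itemize}
In the noncommutative setting, the $\ell_2\to\ell_\infty$ step uses the column/row square function, which dominates the $L_2(\ell_\infty)$ norm through $|\sum x_k|\le$ a column square function bound. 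This gives
\[
  \|(\Tcal_{s,N}f)_N\|_{L_2(\ell_\infty)}
  \lesssim
  s^{C}\max_q\frac{d(q)}{\vphi(q)}\,\|f\|_2
  \lesssim_\varepsilon s^{C}2^{-(1-2\varepsilon)s}\|f\|_2.
\]

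\textbf{Fallback for Step 2.} A cruder version also suffices. Apply \Cref{lem:reduced-residues} at $p=2$ to each $\Pi_qf$, and combine the pieces by the triangle inequality together with Cauchy--Schwarz:
\[
  \sum_q\frac{d(q)}{\vphi(q)}\|\Pi_qf\|_2
  \le
  \Bigl(\sum_q\frac{d(q)^2}{\vphi(q)^2}\Bigr)^{1/2}\Bigl(\sum_q\|\Pi_q f\|_2^2\Bigr)^{1/2}.
\]
By orthogonality, $\sum_q\|\Pi_qf\|_2^2\le\|f\|_2^2$. Moreover $\sum_{q\sim Q}q^{-2+2\varepsilon}\lesssim Q^{-1+2\varepsilon}$, so this already gives
\[
  \|(\Tcal_{s,N}f)_N\|_{L_2(\ell_\infty)}
  \lesssim 2^{-(1/2-\varepsilon)s}\|f\|_2,
\]
that is, $\delta=1/2-\varepsilon$, without any log-loss lemma. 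This is the version I would actually write. The only ingredients are the triangle inequality in $L_2(\Acal;\ell_\infty)$, \Cref{lem:reduced-residues} applied to $\Pi_q f$, the bounds \eqref{eq:standard-divisor-totient-estimates}, and Plancherel for the disjoint projections $\Pi_q$. The one point to verify carefully is $\|\Pi_q f\|_2$ summability, i.e. that $0\le$ symbol $\le1$ with disjoint supports. This check is the main (mild) obstacle, and it follows from (ii).
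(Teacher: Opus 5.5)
Your fallback argument is the paper's proof. The paper localizes each $\Rcal_{q,s,N}$ with the disjointly supported multipliers $P_{q,s}$ (your $\Pi_q$), applies \Cref{lem:reduced-residues} at $p=2$, and combines the pieces with the triangle inequality, Cauchy--Schwarz and Plancherel orthogonality; the bound $\sum_{Q/2\le q<Q}(d(q)/\vphi(q))^2\lesssim Q^{-1+4\rho}$ then gives the decay, with the paper taking $\rho=1/16$ to get $Q^{-3/8}$. The speculative square-function route in your Step 2 is not needed, so you are right to write only the fallback.
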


\begin{proof}
Fix $s\ge1$ and set $Q:=2^{s+1}$. For $Q/2\le q<Q$, let $P_{q,s}$ be
the Fourier multiplier defined on $L_2(\mathcal A)$ by
\begin{equation*}
  \widehat{P_{q,s}f}(\xi)
  :=
  \sum_{\substack{1\le a\le q\\(a,q)=1}}
  \widetilde\eta\!\left(
    D_s\!\left(\xi-\frac{a}{q}\right)
  \right)
  \widehat f(\xi),
  \qquad
  f_{q,s}:=P_{q,s}f.
\end{equation*}
The disjointness of the enlarged major arcs and Plancherel's theorem yield
\begin{equation}\label{eq:Pqs-orthogonality}
  \sum_{Q/2\le q<Q}\norm{f_{q,s}}_{\Lp 2\Acal}^2
  \le
  \norm{f}_{\Lp 2\Acal}^2.
\end{equation}
Since $\widetilde\eta=1$ on $\supp (\eta)$,
\begin{equation}\label{eq:Rqs-localization}
  \Rcal_{q,s,N}f
  =
  \Rcal_{q,s,N}f_{q,s}.
\end{equation}
Therefore, \Cref{lem:reduced-residues},
\eqref{eq:Rqs-localization}, the Cauchy--Schwarz inequality, and
\eqref{eq:Pqs-orthogonality} imply
\begin{equation}\label{eq:Tqs-L2-preliminary}
\begin{aligned}
  \bigl\|(\Tcal_{s,N}f)_{N\ge1}\bigr\|_{\Lpinf 2\Acal}
  &\le
  C\sum_{Q/2\le q<Q}
  \frac{d(q)}{\vphi(q)}\norm{f_{q,s}}_{\Lp 2\Acal}\\
  &\le
  C\left(
    \sum_{Q/2\le q<Q}
    \left(\frac{d(q)}{\vphi(q)}\right)^2
  \right)^{1/2}
  \left(
    \sum_{Q/2\le q<Q}\norm{f_{q,s}}_{\Lp 2\Acal}^2
  \right)^{1/2}\\
  &\le
  C\left(
    \sum_{Q/2\le q<Q}
    \left(\frac{d(q)}{\vphi(q)}\right)^2
  \right)^{1/2}
  \norm{f}_{\Lp 2\Acal}.
\end{aligned}
\end{equation}
By \eqref{eq:standard-divisor-totient-estimates}, for every $\rho>0$,
\begin{equation*}
  d(q)\lesssim_\rho q^\rho,
  \qquad
  \vphi(q)^{-1}\lesssim_\rho q^{-1+\rho}
\end{equation*}
give
\begin{equation*}
  \sum_{Q/2\le q<Q}
  \left(\frac{d(q)}{\vphi(q)}\right)^2
  \lesssim_\rho Q^{-1+4\rho}.
\end{equation*}
Taking $\rho=1/16$, we obtain from
\eqref{eq:Tqs-L2-preliminary} that
\[
  \bigl\|(\Tcal_{s,N}f)_{N\ge1}\bigr\|_{\Lpinf 2\Acal}
  \le
  CQ^{-3/8}\norm{f}_{\Lp 2\Acal}.
\]
Since $Q=2^{s+1}$, this proves \eqref{eq:L2-major-arc-gain}.
\end{proof}

We can now interpolate the preceding two estimates. 
\begin{lemma}\label{lem:summation-over-major-arcs}
Let $1<p<2$. Then
\begin{equation}\label{eq:sum-over-major-arcs}
  \sum_{s\ge1}
  \bigl\|(\Tcal_{s,N}f)_{N\ge1}\bigr\|_{\Lpinf p\Acal}
  \le
  C_p\norm{f}_{\Lp p\Acal}.
\end{equation}
Consequently,
\begin{equation}\label{eq:LN-maximal-r}
  \bigl\|(L_N*f)_{N\ge1}\bigr\|_{\Lpinf p\Acal}
  \le
  C_p\norm{f}_{\Lp p\Acal}.
\end{equation}
\end{lemma}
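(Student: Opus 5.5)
The plan is to interpolate, for each fixed $s$, between the $L_{p_0}$ growth bound of \Cref{prop:Lp-major-arc-growth} and the $L_2$ decay bound of \Cref{prop:L2-major-arc-gain}. The interpolated decay in $s$ will then be summed, and finally \eqref{eq:LN-decomposition} together with \Cref{lem:low-frequency-maximal} will give \eqref{eq:LN-maximal-r}.

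Fix $1<p<2$ and choose $p_0\in(1,p)$, say $p_0=(1+p)/2$. Define $\theta\in(0,1)$ by
\[
  \frac1p=\frac{1-\theta}{p_0}+\frac{\theta}{2}.
\]
For fixed $s$, the map $f\mapsto(\Tcal_{s,N}f)_{N\ge1}$ is linear. By \Cref{prop:Lp-major-arc-growth} it is bounded from $L_{p_0}(\Acal)$ into $L_{p_0}(\Acal;\ell_\infty)$ with norm at most $C_{p_0,\varepsilon}2^{\varepsilon s}$. By \Cref{prop:L2-major-arc-gain} it is bounded from $L_2(\Acal)$ into $L_2(\Acal;\ell_\infty)$ with norm at most $C2^{-\delta s}$. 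Complex interpolation of $L_p(\Acal)$, together with \Cref{lem:maximal-interpolation} on the target side, then gives
\[
  \bigl\|(\Tcal_{s,N}f)_{N\ge1}\bigr\|_{\Lpinf p\Acal}
  \le C_p\,C_{p_0,\varepsilon}^{1-\theta}C^{\theta}\,
  2^{\bigl((1-\theta)\varepsilon-\theta\delta\bigr)s}\norm{f}_{\Lp p\Acal}.
\]
The constant coming from the equivalence of norms in \Cref{lem:maximal-interpolation} depends only on $p_0$, $p$ and $\theta$, not on $s$. This $s$-independence is the main point to check, and it holds because the interpolation statement is about the fixed spaces, not about the operators. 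To make the exponent negative, choose $\varepsilon:=\theta\delta/(2(1-\theta))$; the exponent becomes $-\theta\delta s/2$. Summing the resulting geometric series over $s\ge1$ yields \eqref{eq:sum-over-major-arcs}.

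A technical point is that \Cref{lem:maximal-interpolation} is formulated for sequences indexed by a fixed set, and here the index set is $\mathbb N$. One can work with finite index sets $\{1,\dots,M\}$, where the interpolation is standard, and then pass to the supremum over $M$ using \eqref{eq:finite-subset-norm-identity}. The bounds obtained are uniform in $M$.

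For \eqref{eq:LN-maximal-r}, write
\[
  L_N*f=L_N^{(0)}*f+\sum_{s\ge1}\Tcal_{s,N}f.
\]
For each finite index set the series converges absolutely in $L_p(\Acal;\ell_\infty)$ by \eqref{eq:sum-over-major-arcs}, so the triangle inequality in $L_p(\Acal;\ell_\infty)$ applies. Combining this with \Cref{lem:low-frequency-maximal} for the low-frequency term gives the bound, uniformly in the finite index set, and \eqref{eq:finite-subset-norm-identity} again completes the argument. The main obstacle is the bookkeeping in the choice of $\varepsilon$ relative to $\theta$ and $\delta$; beyond that, only the uniformity of the interpolation constants in $s$ needs care.
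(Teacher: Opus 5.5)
Your proposal is correct and follows essentially the same route as the paper. For each fixed $s$ you interpolate the $L_{p_0}$ growth bound of \Cref{prop:Lp-major-arc-growth} with the $L_2$ decay bound of \Cref{prop:L2-major-arc-gain} via \Cref{lem:maximal-interpolation}, choose $\varepsilon$ small relative to $\theta\delta$, sum the geometric series, and then use \eqref{eq:LN-decomposition} together with \Cref{lem:low-frequency-maximal}. Your remarks on the $s$-independence of the interpolation constant and on reducing to finite index sets only make explicit what the paper leaves implicit.
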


\begin{proof}
Choose $p_0$ such that $1<p_0<p$, and let $\theta\in(0,1)$ be
determined by
\begin{equation*}
  \frac{1}{p}
  =
  \frac{1-\theta}{p_0}+\frac{\theta}{2}.
\end{equation*}
Interpolating \eqref{eq:Lp-major-arc-growth} with
\eqref{eq:L2-major-arc-gain}, using
\Cref{lem:maximal-interpolation}, gives
\begin{equation}\label{eq:interpolated-major-arc-bound}
  \bigl\|(\Tcal_{s,N}f)_{N\ge1}\bigr\|_{\Lpinf p\Acal}
  \le
  C_{p,\varepsilon}
  2^{((1-\theta)\varepsilon-\theta\delta)s}
  \norm{f}_{\Lp p\Acal}.
\end{equation}
Choose $\varepsilon>0$ sufficiently small so that
\begin{equation*}
  c_p
  :=
  \theta\delta-(1-\theta)\varepsilon
  >0.
\end{equation*}
Then \eqref{eq:interpolated-major-arc-bound} becomes
\begin{equation*}
  \bigl\|(\Tcal_{s,N}f)_{N\ge1}\bigr\|_{\Lpinf p\Acal}
  \le
  C_p2^{-c_ps}\norm{f}_{\Lp p\Acal}.
\end{equation*}
Summing over $s\ge1$ proves \eqref{eq:sum-over-major-arcs}.
Finally, combining \eqref{eq:sum-over-major-arcs}, the decomposition
\eqref{eq:LN-decomposition}, the triangle inequality in
$\Lpinf p\Acal$, and the low-frequency estimate
\eqref{eq:low-frequency-maximal}, we obtain
\eqref{eq:LN-maximal-r}.
\end{proof}

\subsection{The dyadic maximal inequality}
\label{subsec:proof-dyadic-prime-maximal}

We now prove the dyadic maximal inequality stated in
\Cref{prop:dyadic-prime-maximal}.

\begin{proof}[Proof of \Cref{prop:dyadic-prime-maximal}.]

Fix $1<p<2$, and choose $q$ such that
\begin{equation*}
  1<q<p<2.
\end{equation*}
By \Cref{lem:summation-over-major-arcs}, applied with exponent $q$,
\begin{equation*}
  \bigl\|(L_N*f)_{N\ge1}\bigr\|_{\Lpinf q\Acal}
  \le
  C_q\norm{f}_{\Lp q\Acal}.
\end{equation*}
In particular,
\begin{equation}\label{eq:LN-uniform-Lq}
  \sup_{N\ge1}\norm{L_N*f}_{\Lp q\Acal}
  \le
  C_q\norm{f}_{\Lp q\Acal}.
\end{equation}

For $k\ge1$, set
\begin{equation*}
  E_k:=K_{2^k}-L_{2^k}.
\end{equation*}
The Chebyshev estimate \eqref{eq:chebyshev-theta-upper} gives
\begin{equation*}
  \sup_{N\ge2}\norm{K_N}_{\ell_1(\Z)}<\infty.
\end{equation*}
Consequently, Young's inequality and \eqref{eq:LN-uniform-Lq} yield
\begin{equation}\label{eq:Ek-Lq-bound}
\begin{aligned}
  \norm{E_k*f}_{\Lp q\Acal}
  &\le
  \norm{K_{2^k}*f}_{\Lp q\Acal}
  +
  \norm{L_{2^k}*f}_{\Lp q\Acal}\\
  &\le
  C_q\norm{f}_{\Lp q\Acal},
  \qquad k\ge1.
\end{aligned}
\end{equation}

By the approximation estimate
\eqref{eq:approximation-error} and Plancherel's theorem imply that, for every $A>0$,
\begin{equation}\label{eq:Ek-L2-bound}
  \norm{E_k*f}_{\Lp 2\Acal}
  \le
  C_Ak^{-A}\norm{f}_{\Lp 2\Acal},
  \qquad k\ge1.
\end{equation}
Let $\vartheta\in(0,1)$ be determined by
\begin{equation*}
  \frac{1}{p}
  =
  \frac{1-\vartheta}{q}
  +
  \frac{\vartheta}{2}.
\end{equation*}
Interpolating \eqref{eq:Ek-Lq-bound} and \eqref{eq:Ek-L2-bound}, we
obtain
\begin{equation*}
  \norm{E_k*f}_{\Lp p\Acal}
  \le
  C_{p,A}k^{-A\vartheta}\norm{f}_{\Lp p\Acal}.
\end{equation*}
Choose $A>\vartheta^{-1}$. Since the sequence having $E_k*f$ in its
$k$-th coordinate and zero elsewhere has
$L_p(\Acal;\ell_\infty)$-norm equal to
$\norm{E_k*f}_{\Lp p\Acal}$, the triangle inequality in
$L_p(\Acal;\ell_\infty)$ gives
\begin{equation}\label{eq:Ek-vector-valued-bound}
\begin{aligned}
  \bigl\|(E_k*f)_{k\ge1}\bigr\|_{\Lpinf p\Acal}
  &\le
  \sum_{k\ge1}\norm{E_k*f}_{\Lp p\Acal}\\
  &\le
  C_{p,A}
  \sum_{k\ge1}k^{-A\vartheta}
  \norm{f}_{\Lp p\Acal}\\
  &\le
  C_p\norm{f}_{\Lp p\Acal}.
\end{aligned}
\end{equation}
Finally,
\[
  (K_{2^k}*f)_{k\ge1}
  =
  (L_{2^k}*f)_{k\ge1}
  +(E_k*f)_{k\ge1}.
\]
The first term is bounded by \eqref{eq:LN-maximal-r}, whereas the
second is bounded by \eqref{eq:Ek-vector-valued-bound}. This proves
\eqref{eq:dyadic-prime-maximal}.
\end{proof}

\section{Bilateral almost uniform convergence for the weighted prime averages}
\label{sec:weighted-prime-bau-convergence}

In this section, we prove the b.a.u. convergence assertion in
\Cref{thm:weighted-prime-maximal-Lp} for the weighted prime averages
\[
  B_N(x)
  :=
  \frac{1}{N}
  \sum_{q\in P_N}(\log q)\gamma^q(x),
  \qquad N\geq2.
\]
In Bourgain's scalar argument
\cite[Section~7]{BourgainMaximal}, the quantitative maximal inequality
is combined with pointwise maximal functions and exceptional sets to
obtain almost everywhere convergence by contradiction.  Such a pointwise argument is not available in the
noncommutative setting.  Our main observation is that the
noncommutative Bourgain-type quantitative maximal inequality below
already implies
\[
  \lim_{N\to\infty}
  \left\|
    \bigl(B_n(x)-B_N(x)\bigr)_{n\geq N}
  \right\|_{L_2(\Ncal;\ell_\infty)}
  =0
\]
for every $x\in \mathcal S(\mathcal N)$.  It follows that,
for some $y\in L_2(\Ncal)$,
\[
  \bigl(B_n(x)-y\bigr)_{n\geq2}
  \in L_2(\Ncal;c_0),
\]
which gives the desired b.a.u. convergence.  This $c_0$-valued conclusion is stronger than the
corresponding scalar almost everywhere convergence and also gives an
alternative proof in the commutative setting.  We first prove the
convergence result from the
quantitative maximal inequality and then prove that inequality in the
second part of the section.

Fix $0<\varepsilon<1/4$, and define
\begin{equation*}
  \mathbb D_\varepsilon
  :=
  \left\{
    \left\lfloor(1+\varepsilon)^r\right\rfloor:
    r=1,2,\ldots
  \right\}.
\end{equation*}
For
\[
  2\leq N_0<N_1<\cdots<N_{J+1},
  \qquad
  N_i\in\mathbb D_\varepsilon,
  \qquad
  N_{i+1}>2N_i,
\]
set
\[
  \mathbb D_{\varepsilon,i}
  :=
  \mathbb D_\varepsilon\cap[N_i,N_{i+1}).
\]

The key instrumental estimate of this section is the following noncommutative
Bourgain-type quantitative maximal inequality.

\begin{proposition}\label{prop:ergodic-grid-oscillation}
There exists a sequence $c_J=c_J(\varepsilon)$ satisfying
$c_J\to0$ as $J\to\infty$ such that
\begin{equation}\label{eq:ergodic-grid-oscillation}
  \frac{1}{J+1}
  \sum_{i=0}^{J}
  \left\|
    \bigl(
      B_N(x)-B_{N_i}(x)
    \bigr)_{N\in\mathbb D_{\varepsilon,i}}
  \right\|_{L_2(\Ncal;\ell_\infty)}^2
  \leq
  c_J\|x\|_{L_2(\Ncal)}^2
\end{equation}
for every $x\in \mathcal S(\mathcal N)$, uniformly over
all choices of $(N_i)_{i=0}^{J+1}$ satisfying the conditions above.
\end{proposition}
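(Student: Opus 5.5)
We prove \eqref{eq:ergodic-grid-oscillation} in the discrete model $\Acal=\ell_\infty(\Z)\bar\otimes\Mcal$ and then transfer it to $\Ncal$. For the transference we use the Calder\'on argument of \cite[Section~4.2]{ChenHongWang+arXiv2024}. It applies because the estimate is a finite sum of squared $L_2(\cdot;\ell_\infty)$-norms of families of convolution operators with finitely supported kernels. So it suffices to show
\[
  \frac{1}{J+1}\sum_{i=0}^{J}\bigl\|(K_N*f-K_{N_i}*f)_{N\in\mathbb D_{\varepsilon,i}}\bigr\|_{\Lpinf 2\Acal}^2\le c_J\norm{f}_{\Lp 2\Acal}^2 .
\]

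\emph{Step 1: replace $K_N$ by $L_N$.} The error $E_N=K_N-L_N$ satisfies \eqref{eq:approximation-error}. For $N\in\mathbb D_{\varepsilon,i}$ we have $N\ge N_i\ge 2^{i}$, since $N_{i+1}>2N_i$. By the triangle inequality in $L_2(\Acal;\ell_\infty)$ and Plancherel,
\[
  \|(E_N*f)_{N\in\mathbb D_{\varepsilon,i}}\|_{L_2(\ell_\infty)}\le\sum_{N\in\mathbb D_\varepsilon,N\ge N_i}C_A(\log N)^{-A}\|f\|_2\lesssim_{A,\varepsilon} i^{-A+1}\|f\|_2 .
\]
This uses the fact that $\mathbb D_\varepsilon$ has about $\log N/\log(1+\varepsilon)$ elements near scale $\log N$. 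Averaging the squares over $i\le J$ gives $O(J^{-1})$, which is acceptable.

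\emph{Step 2: split $L_N$ into scales.} Write $L_N=L_N^{(0)}+\sum_{s\le s_0}\Tcal_{s,N}+\sum_{s>s_0}\Tcal_{s,N}$. For the tail $s>s_0$, \Cref{prop:L2-major-arc-gain} gives a contribution of at most $C2^{-\delta s_0}\|f\|_2$ for each $i$, uniformly. We will choose $s_0=s_0(J)\to\infty$ slowly. For the finitely many remaining pieces we use the structure $\widehat{\Tcal_{s,N}}=\sum\frac{\mu(q)}{\vphi(q)}\sum_a\nu_N(\xi-a/q)\eta(D_s(\xi-a/q))$ with $q<2^{s_0+1}$, a bounded number of terms for fixed $s_0$.

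Fix such a rational $a/q$. Decompose $f$ in frequency around $a/q$ into annuli $|\xi-a/q|\sim 2^{-j}$, together with the part at frequency distance $\lesssim N_{i+1}^{-1}$.
\begin{itemize}
\item On the part where $|\xi-a/q|\ll N_{i+1}^{-1}$, all $\nu_N$ with $N<N_{i+1}$ are close to $\nu_{N_i}$ only up to the ratio $N/N_i$. Instead, note that for $|\xi-a/q|\ge 1/N_i$ \emph{both} $\nu_N$ and $\nu_{N_i}$ are small.
\item The difference $\nu_N-\nu_{N_i}$ is therefore essentially concentrated on $|\xi-a/q|\in[N_{i+1}^{-1}\lambda^{-1},\lambda N_i^{-1}]$ for a large parameter $\lambda$. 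Outside this window it is bounded by $O(\lambda^{-1})$, in a form with a gain that can be turned into a maximal bound by a Littlewood--Paley square function. In $L_2$ we use the variational/square-function estimate for the averages $\frac1N\sum\sigma^m$ restricted to the lacunary set. We want the noncommutative $L_2(\ell_\infty)$ bound by a square function, which holds since at $p=2$ one has $\|(x_k)\|_{L_2(\ell_\infty)}\le\|(\sum|x_k|^2)^{1/2}\|_2$ for column square functions.
\end{itemize}
So the contribution at index $i$ is at most $C_{s_0}(\lambda^{-1}\|f\|_2+\|\Pi_i f\|_2)$, where $\Pi_i$ is the frequency projection onto the windows $\bigcup_{a/q}\{|\xi-a/q|\in[(\lambda N_{i+1})^{-1},\lambda N_i^{-1}]\}$. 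The maximal control over $N\in\mathbb D_{\varepsilon,i}$ on this window comes from \Cref{lem:all-residues}, via the sampling principle \Cref{lem:sampling} for the fixed $q$. The windows for different $i$ overlap at most $O(\log\lambda)$ times because $N_{i+1}>2N_i$. Hence $\sum_i\|\Pi_i f\|_2^2\lesssim\log\lambda\,\|f\|_2^2$, and averaging gives $O(\log\lambda/J)$.

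\emph{Step 3: optimize.} Collecting the bounds,
\[
  c_J\lesssim J^{-1}+2^{-2\delta s_0}+C_{s_0}\bigl(\lambda^{-2}+\log\lambda/J\bigr).
\]
Choosing $s_0\to\infty$ and then $\lambda=\lambda(J)\to\infty$ slowly gives $c_J\to0$. The main obstacle is Step 2, the off-window part. We need a genuinely \emph{maximal} (not just single-$N$) $L_2(\ell_\infty)$ bound of size $o(1)$ for $(\nu_N-\nu_{N_i})(\xi-a/q)$ on frequencies where the multipliers are uniformly small or uniformly close. I would first try to handle this with the noncommutative square-function domination of $L_2(\ell_\infty)$ combined with the $\varepsilon$-grid: $\mathbb D_{\varepsilon,i}$ has $O(\varepsilon^{-1}\log(N_{i+1}/N_i))$ points. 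On the low-frequency side one bounds $\sum_N|\nu_N-\nu_{N_i}|^2$ by $C(\lambda N_{i+1}|\xi-a/q|)^2$-type geometric sums; on the high side one uses $|\nu_N(\theta)|\lesssim(N|\theta|)^{-1}$.
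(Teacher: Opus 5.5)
Your proposal is correct. It differs from the paper only in how the finitely many major arcs $s<s_0$ are handled.

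\textbf{Comparison with the paper.} For each fixed rational $a/q$, and for $L_N^{(0)}$, the paper proves the $J$-uniform oscillation inequality \eqref{eq:q-block-oscillation-estimate}. It does this in three moves:
\begin{enumerate}
\item it replaces $\nu_N$ by $\widehat k(N\,\cdot)$;
\item it proves the continuous estimate \Cref{lem:continuous-model-oscillation}, using the parameter Sobolev embedding \Cref{lem:nc-sobolev-maximal-parameter} and a Littlewood--Paley decomposition adapted to $(N_i)$;
\item it transfers the result to $\Z$ via \Cref{lem:sampling}.
\end{enumerate}
The major-arc contribution is then $O(4^{s_0})$ with no dependence on $J$. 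You instead prove only an $o(J)$ bound per rational:
\[
\sum_{i}\Omega_{i,a,q,s}(f)^2\lesssim_\varepsilon\bigl((J+1)\lambda^{-2}+\log\lambda\bigr)\|f\|_2^2 .
\]
The window part is controlled by the maximal inequality and by the $O(\log\lambda)$ overlap of the windows, which follows from $N_{i+1}>2N_i$. The off-window part is controlled by the domination $\|(x_k)_k\|_{L_2(\Acal;\ell_\infty)}\le(\sum_k\|x_k\|_2^2)^{1/2}$ together with geometric sums along $\mathbb D_\varepsilon$. The proposition only asks that $c_J\to0$, so this is enough after a two-parameter choice, for example $\lambda=J$ and $s_0\approx\kappa\log_2 J$ with $\kappa$ small.

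Your argument is more elementary: it uses only the maximal inequalities of Section~3 and Plancherel, with no continuous model. The paper's argument yields a genuine noncommutative oscillation inequality for each arc, which is stronger information and makes the bookkeeping in $s_0$ cleaner.

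\textbf{Points to tighten.}
\begin{itemize}
\item \emph{High side, $|\theta|>\lambda/N_i$.} Separate the single term $\nu_{N_i}$ by the triangle inequality before square-summing. The size of $\mathbb D_{\varepsilon,i}$ is unbounded, because $N_{i+1}/N_i$ is arbitrary, so $\sum_{N\in\mathbb D_{\varepsilon,i}}|\nu_{N_i}(\theta)|^2$ is not $O(\lambda^{-2})$. By contrast, $\sum_{N\in\mathbb D_\varepsilon,\,N\ge N_i}|\nu_N(\theta)|^2\lesssim_\varepsilon(N_i|\theta|)^{-2}$.
\item \emph{Low side, $|\theta|<(\lambda N_{i+1})^{-1}$.} The right estimate is $|\nu_N(\theta)-\nu_{N_i}(\theta)|\lesssim N|\theta|$; your first bullet garbles this. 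The number of points of $\mathbb D_{\varepsilon,i}$ plays no role; only the geometric decay along $\mathbb D_\varepsilon$ matters.
\item \emph{The $L_2$ domination.} No variational estimate is needed. Set $A^2=\sum_kx_kx_k^*$ and $B^2=\sum_kx_k^*x_k$. The two-sided Douglas factorization from the proof of \Cref{lem:nc-sobolev-maximal-parameter} gives $x_k=A^{1/2}y_kB^{1/2}$ with $\|y_k\|_\infty\le1$. Since $\tau(A^2)=\tau(B^2)=\sum_k\|x_k\|_2^2$, the bound follows.
\item \emph{Maximal bound for a single rational $a/q$.} This is not \Cref{lem:all-residues} at denominator $q$, which sums over all residues. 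It is the case $q=1$, $D=D_s$ of that lemma, conjugated by the modulation $n\mapsto\e^{2\pi i an/q}$. This modulation is a central unitary of $\Acal$, so it preserves $L_2(\Acal;\ell_\infty)$-norms.
\end{itemize}
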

The proof of \Cref{prop:ergodic-grid-oscillation} is postponed to
\Cref{subsec:proof-quantitative-maximal}.

\subsection{From the quantitative maximal inequality to b.a.u. convergence}
\label{subsec:vanishing-oscillation}

We first use \Cref{prop:ergodic-grid-oscillation} to prove the b.a.u.
convergence of the weighted prime averages.  

\begin{lemma}\label{lem:weighted-prime-L2-Cauchy}
For every $x\in \mathcal S(\mathcal N)$,
\begin{equation*}
  \lim_{n\to\infty}
  \left\|
    \bigl(B_m(x)-B_n(x)\bigr)_{m\geq n}
  \right\|_{L_2(\Ncal;\ell_\infty)}
  =0.
\end{equation*}
\end{lemma}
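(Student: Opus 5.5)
The plan is to argue by contradiction, converting a non-vanishing tail oscillation into infinitely many disjoint lacunary blocks, each carrying a fixed amount of $L_2(\Ncal;\ell_\infty)$-oscillation, which is then incompatible with \Cref{prop:ergodic-grid-oscillation}.

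\emph{Reductions.} Since $x\in\mathcal S(\Ncal)$ is a linear combination of four positive elements of $\mathcal S(\Ncal)$, and $B_m$ is linear, it suffices to treat $x\ge0$. Set
\[
  \omega(n):=\bigl\|(B_m(x)-B_n(x))_{m\ge n}\bigr\|_{L_2(\Ncal;\ell_\infty)}.
\]
This is finite by the maximal inequality of \Cref{thm:weighted-prime-maximal-Lp} with $p=2$. It is essentially monotone: for $n'\ge n$ we have
\[
  B_m-B_{n'}=(B_m-B_n)-(B_{n'}-B_n),
\]
so $\omega(n')\le2\omega(n)$. Hence, if the lemma fails, there is $\delta>0$ with $\omega(n)>\delta$ for all large $n$.

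\emph{Passing to the grid.} Next I would show that replacing indices by points of $\mathbb D_\varepsilon$ costs only $C\varepsilon\|x\|_2$.
\begin{itemize}
  \item For $x\ge0$, $M\in\mathbb D_\varepsilon$, and $M\le m<M'$ with $M'$ the next grid point, positivity gives
  \[
    -C\varepsilon B_{M'}(x)\le B_m(x)-B_M(x)\le C\varepsilon B_{M'}(x).
  \]
  The term $(1/m-1/M)\sum_{q\le M}$ is $O(\varepsilon)B_M$, and the extra primes in $(M,m]$ contribute at most $\frac{M'}{m}B_{M'}-\frac{M}{m}B_M$, which is $O(\varepsilon)B_{M'}$ plus a comparable term. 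Strictly speaking, the prime sum over $(M,M']$ needs the prime number theorem \eqref{eq:PNT-pi-form}, which is where $M$ large is used.
  \item By the positive-majorant characterization and the maximal inequality, this gives
  \[
    \bigl\|(B_m(x)-B_{M(m)}(x))_{m\ge n}\bigr\|_{L_2(\ell_\infty)}\le C\varepsilon\|x\|_2,
  \]
  where $M(m)$ denotes the largest grid point $\le m$.
\end{itemize}
Fix $\varepsilon$ with $C\varepsilon\|x\|_2<\delta/8$. Then for every large grid point $N$, the grid oscillation $\|(B_N'(x)-B_N(x))_{N'\in\mathbb D_\varepsilon,\,N'\ge N}\|_{L_2(\ell_\infty)}$ exceeds $\delta/2$, after also absorbing the replacement of the base point $n$ by a grid point via the quasi-monotonicity above.

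\emph{Building blocks.} By the finite-subset identity \eqref{eq:finite-subset-norm-identity}, for each large grid point $N_0$ there is a finite $N_1\in\mathbb D_\varepsilon$ such that the oscillation over $\mathbb D_\varepsilon\cap[N_0,N_1)$ is $>\delta/2$. Enlarging $N_1$ only increases the norm, so I may also require $N_1>2N_0$. Iterating gives $N_0<N_1<\dots<N_{J+1}$ in $\mathbb D_\varepsilon$ with $N_{i+1}>2N_i$, each block having oscillation $>\delta/2$. The left side of \eqref{eq:ergodic-grid-oscillation} is then at least $\delta^2/4$ for every $J$. This contradicts $c_J\|x\|_2^2\to0$, because $c_J$ depends only on $\varepsilon$, which was fixed before $J$.

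\emph{Main obstacle.} The hard step is the grid comparison, namely controlling $B_m-B_{M(m)}$ by $\varepsilon$ times a single positive majorant with $L_2$ norm $\lesssim\|x\|_2$. If the prime-counting step there proves awkward, I would instead bound it using $\sup_N$ of $K_N$-type averages over primes in short intervals $(M,(1+\varepsilon)M]$, which are dominated by $C\varepsilon\,B_{2M}(x)$ via Chebyshev/PNT.
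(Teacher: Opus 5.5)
Your overall architecture matches the paper's. You reduce to $x\ge0$, pass to the grid $\mathbb D_\varepsilon$ at a cost that is small in $\varepsilon$, build lacunary blocks with \eqref{eq:finite-subset-norm-identity}, and contradict $c_J\to0$ with $\varepsilon$ fixed before $J$. Your quasi-monotonicity $\omega(n')\le2\omega(n)$ and your ordering of the quantifiers are cleaner than the paper's.

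The step you flagged as the main obstacle, however, fails as written. The order inequality $B_m(x)-B_M(x)\le C\varepsilon B_{M'}(x)$ for $M\le m<M'$ is false; only the lower bound holds. The upper bound would need $\frac1m\sum_{M<q\le m}(\log q)\gamma^q(x)\le C\varepsilon B_{M'}(x)$, i.e.\ that the primes in a short interval carry a small fraction of the full sum ``at every point''.

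Here is a counterexample. Take the commutative model $\Ncal=\ell_\infty(\Z/L\Z)$ with $L>M'$, the rotation $(\gamma f)(j)=f(j+1)$, and $x=\1_{(M,M']}$. At $j=0$ the sum $\sum_{q\le M'}(\log q)\gamma^q(x)(0)$ sees only the primes in $(M,M']$. So for $m=M'-1$ and $M$ large, the left side is of order $\varepsilon^{-1}$ times the right side.

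Your ``comparable term'' $\frac Mm\bigl(B_{M'}(x)-B_M(x)\bigr)$ is itself a grid increment with no smallness. The fallback has the same defect. Chebyshev/PNT give $\frac{\vartheta((1+\varepsilon)M)-\vartheta(M)}{M}\lesssim\varepsilon$, which bounds the \emph{norm} of the short-interval averages on $L_\infty(\Ncal)$, not an operator domination by $\varepsilon B_{2M}(x)$. Through the positive-majorant characterization your argument therefore yields only an $O(1)$ bound, not $C\varepsilon\|x\|_2$.

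The missing idea, which the paper uses, is interpolation of the \emph{linear} map $\mathscr S\colon x\mapsto\bigl(B_m(x)-B_{M(m)}(x)\bigr)_{m\ge n}$; no positivity is needed.
\begin{itemize}
  \item For $n$ beyond a threshold depending on $\varepsilon$, the PNT computation gives $\|\mathscr Sx\|_{L_\infty(\Ncal;\ell_\infty)}\le C\varepsilon\|x\|_\infty$.
  \item For any fixed $1<p_0<2$, the maximal inequality of \Cref{thm:weighted-prime-maximal-Lp} gives $\|\mathscr Sx\|_{L_{p_0}(\Ncal;\ell_\infty)}\le C_{p_0}\|x\|_{p_0}$. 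Repeating the elements $B_{M(m)}(x)$ does not increase the norm.
\end{itemize}
\Cref{lem:maximal-interpolation} then yields
\[
  \|\mathscr Sx\|_{L_2(\Ncal;\ell_\infty)}\lesssim_{p_0}\varepsilon^{1-p_0/2}\|x\|_{L_2(\Ncal)}.
\]
The paper takes $p_0=3/2$ and obtains $\varepsilon^{1/4}$. If you use this bound in place of $C\varepsilon$ when choosing $\varepsilon$, the rest of your argument goes through unchanged.
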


\begin{proof}
By linearity, it is enough to consider $x\ge0$ with
$\|x\|_{L_\infty(\Ncal)}\le1$. Suppose, to the contrary, that the maximal norm
does not tend to zero; that is,
\[
  \left\|
    (B_m(x)-B_n(x))_{m\ge n}
  \right\|_{L_2(\Ncal;\ell_\infty)}
  \not\longrightarrow0.
\]
Then there exist $\rho>0$ and arbitrarily large integers $n$ for which
\begin{equation*}
  \left\|
    \bigl(B_m(x)-B_n(x)\bigr)_{m\ge n}
  \right\|_{L_2(\Ncal;\ell_\infty)}
  >4\rho.
\end{equation*}
Together with \eqref{eq:finite-subset-norm-identity}, one may consecutively choose a sequence $(N_i)_{i\geq0}\subset\mathbb D_\varepsilon$ with
$N_{i+1}>2N_i$ such that
\begin{equation}\label{eq:bad-full-blocks-new}
  \left\|
    \bigl(
      B_N(x)-B_{N_i}(x)
    \bigr)_{N_i\leq N<N_{i+1}}
  \right\|_{L_2(\Ncal;\ell_\infty)}
  >2\rho
\end{equation}
for every $i$.

Choose $0<\varepsilon<1/4$ sufficiently small so that
\begin{equation*}
  C\varepsilon^{1/4}\|x\|_{L_2(\Ncal)}<\rho,
\end{equation*}
where $C$ is the least constant appearing in \eqref{eq:short-variation-L2-semifinite} below.
For $N_i\leq N<N_{i+1}$, let
\[
  N'
  :=
  \max\left(
    \{N_i\}\cup
    \bigl(\mathbb D_\varepsilon\cap[2,N]\bigr)
  \right).
\]
Then $N'\in\mathbb D_{\varepsilon,i}$, and
\[
  N'\leq N<(1+\varepsilon)N'+1.
\]

For each fixed $i$, define the linear operator
\[
  \mathscr S_{\varepsilon,i}x
  :=
  \bigl(
    B_N(x)-B_{N'}(x)
  \bigr)_{N_i\leq N<N_{i+1}}.
\]
Then
\begin{equation}\label{eq:short-variation-Linfty-semifinite}
  \|\mathscr S_{\varepsilon,i}x\|_
    {L_\infty(\Ncal;\ell_\infty)}
  \leq
  C\varepsilon\|x\|_{L_\infty(\Ncal)}.
\end{equation}
Indeed, for each fixed
$0<\varepsilon<1/4$,  by \eqref{eq:PNT-pi-form}, there exists
$N_\varepsilon\geq2$ such that
\begin{equation}\label{eq:PNT-epsilon-form}
  |\vartheta(M)-M|
  \leq
  \varepsilon M,
  \qquad M\geq N_\varepsilon.
\end{equation}
For $N'\leq N<(1+\varepsilon)N'+1$, we have
\begin{align*}
  B_N(x)-B_{N'}(x)
  =
  \frac1N
  \sum_{\substack{N'<q\leq N\\q\ {\rm prime}}}
  (\log q)\gamma^q(x)-
  \frac{N-N'}{NN'}
  \sum_{\substack{q\leq N'\\q\ {\rm prime}}}
  (\log q)\gamma^q(x).
\end{align*}
Since $\gamma$ is a $*$-automorphism, taking operator norms gives
\begin{align*}
  \|B_N(x)-B_{N'}(x)\|_{L_\infty(\Ncal)}
  &\leq
  \left(
    \frac{\vartheta(N)-\vartheta(N')}{N}
    +
    \frac{N-N'}{NN'}\vartheta(N')
  \right)\|x\|_{L_\infty(\Ncal)}
  \notag\\
  &\leq
  C\varepsilon\|x\|_{L_\infty(\Ncal)},
\end{align*}
where the last inequality follows from \eqref{eq:PNT-epsilon-form},
provided that $N_i$ is sufficiently large (depending on
$\varepsilon$).
Therefore,
\begin{equation*}
  \|\mathscr S_{\varepsilon,i}x\|_
    {L_\infty(\Ncal;\ell_\infty)}
  =
  \sup_{N_i\leq N<N_{i+1}}
  \|B_N(x)-B_{N'}(x)\|_{L_\infty(\Ncal)}
  \leq
  C\varepsilon\|x\|_{L_\infty(\Ncal)}.
\end{equation*}

On the other hand, the maximal part of
\Cref{thm:weighted-prime-maximal-Lp} gives
\begin{align}
  \|\mathscr S_{\varepsilon,i}x\|_
    {L_{3/2}(\Ncal;\ell_\infty)}
  &\leq
  \left\|
    \bigl(B_N(x)\bigr)_{N_i\leq N<N_{i+1}}
  \right\|_{L_{3/2}(\Ncal;\ell_\infty)}
  \notag\\
  &\quad+
  \left\|
    \bigl(B_{N'}(x)\bigr)_{N_i\leq N<N_{i+1}}
  \right\|_{L_{3/2}(\Ncal;\ell_\infty)}
  \notag\\
  &\leq
  C\|x\|_{L_{3/2}(\Ncal)}.
  \label{eq:short-variation-Lthreehalves}
\end{align}

Interpolating
\eqref{eq:short-variation-Linfty-semifinite} and
\eqref{eq:short-variation-Lthreehalves}, 
we obtain
\begin{equation}\label{eq:short-variation-L2-semifinite}
  \left\|
    \bigl(
      B_N(x)-B_{N'}(x)
    \bigr)_{N_i\leq N<N_{i+1}}
  \right\|_{L_2(\Ncal;\ell_\infty)}
  \leq
  C\varepsilon^{1/4}\|x\|_{L_2(\Ncal)}
  <\rho.
\end{equation}

For every $N_i\leq N<N_{i+1}$,
\[
  B_N(x)-B_{N_i}(x)
  =
  \bigl(B_N(x)-B_{N'}(x)\bigr)
  +
  \bigl(B_{N'}(x)-B_{N_i}(x)\bigr).
\]
Consequently,
\begin{align*}
  &
  \left\|
    \bigl(
      B_N(x)-B_{N_i}(x)
    \bigr)_{N_i\leq N<N_{i+1}}
  \right\|_{L_2(\Ncal;\ell_\infty)}
  \\
  &\qquad<
  \rho+
  \left\|
    \bigl(
      B_N(x)-B_{N_i}(x)
    \bigr)_{N\in\mathbb D_{\varepsilon,i}}
  \right\|_{L_2(\Ncal;\ell_\infty)}.
\end{align*}
Combining this with \eqref{eq:bad-full-blocks-new} gives
\begin{equation*}
  \left\|
    \bigl(
      B_N(x)-B_{N_i}(x)
    \bigr)_{N\in\mathbb D_{\varepsilon,i}}
  \right\|_{L_2(\Ncal;\ell_\infty)}
 >\rho.
\end{equation*}
Consequently,
\begin{equation*}
  \frac1{J+1}
  \sum_{i=0}^{J}
  \left\|
    \bigl(
      B_N(x)-B_{N_i}(x)
    \bigr)_{N\in\mathbb D_{\varepsilon,i}}
  \right\|_{L_2(\Ncal;\ell_\infty)}^2
  >\rho^2.
\end{equation*}
This contradicts \eqref{eq:ergodic-grid-oscillation}, since
$c_J\to0$.
 Therefore
\[
  \lim_{n\to\infty}
  \left\|
    (B_m(x)-B_n(x))_{m\ge n}
  \right\|_{L_2(\Ncal;\ell_\infty)}
  =0.
\]

\end{proof}

\begin{proposition}\label{prop:weighted-prime-bau-bounded}
For every $x\in \mathcal S(\mathcal N)$, the sequence
$(B_N(x))_{N\ge2}$ converges b.a.u.
\end{proposition}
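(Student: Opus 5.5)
The plan is to convert the Cauchy property of \Cref{lem:weighted-prime-L2-Cauchy} into membership of $(B_m(x)-y)_{m\ge2}$ in $L_2(\Ncal;c_0)$ for a suitable limit $y$. Then \eqref{eq:c0-implies-bau} gives the b.a.u.\ convergence. Throughout, fix $x\in\mathcal S(\Ncal)\subset L_2(\Ncal)$.

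\emph{Step 1: constructing the limit $y$.} For $m\ge n$, the single term $B_m(x)-B_n(x)$ satisfies
\[
\|B_m(x)-B_n(x)\|_{L_2(\Ncal)}\le\bigl\|(B_k(x)-B_n(x))_{k\ge n}\bigr\|_{L_2(\Ncal;\ell_\infty)}.
\]
This holds because, for any factorization $B_k(x)-B_n(x)=a y_k b$, Hölder's inequality gives $\|a y_m b\|_2\le\|a\|_4\sup_k\|y_k\|_\infty\|b\|_4$. By \Cref{lem:weighted-prime-L2-Cauchy}, $(B_n(x))_n$ is therefore Cauchy in $L_2(\Ncal)$, and we let $y\in L_2(\Ncal)$ be its limit. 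Letting $m\to\infty$ in the display yields
\[
\|B_n(x)-y\|_2\le\bigl\|(B_k(x)-B_n(x))_{k\ge n}\bigr\|_{L_2(\Ncal;\ell_\infty)}\longrightarrow0 .
\]

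\emph{Step 2: approximating by finitely supported sequences.} First, $(B_m(x)-y)_{m\ge2}$ belongs to $L_2(\Ncal;\ell_\infty)$. Indeed, $(B_m(x))_m$ does by the maximal part of \Cref{thm:weighted-prime-maximal-Lp} with $p=2$, and the constant sequence $(y)_m$ does too.

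For the constant sequence we use the factorization $y=u|y|^{1/2}\cdot\1\cdot|y|^{1/2}$, where $u$ is the partial isometry in the polar decomposition. This gives
\[
\|(z)_{m\in\Lambda}\|_{L_2(\Ncal;\ell_\infty)}\le\|z\|_2
\]
for any $z\in L_2(\Ncal)$ and any index set $\Lambda$.

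Now set $z^{(n)}:=(B_m(x)-y)\1_{\{m<n\}}$, a finite sequence. Then
\[
(B_m(x)-y)_m-z^{(n)}=\bigl(B_m(x)-B_n(x)\bigr)_{m\ge n}+\bigl(B_n(x)-y\bigr)_{m\ge n},
\]
with both sequences extended by zero for $m<n$. Extending by zero does not increase the maximal norm, by \eqref{eq:finite-subset-norm-identity}.

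By the triangle inequality, \Cref{lem:weighted-prime-L2-Cauchy}, and Step 1, the $L_2(\Ncal;\ell_\infty)$-norm of this difference is at most $2\|(B_m(x)-B_n(x))_{m\ge n}\|_{L_2(\Ncal;\ell_\infty)}$, which tends to $0$. Since $L_2(\Ncal;c_0)$ is by definition the closure of the finite sequences, we get $(B_m(x)-y)_m\in L_2(\Ncal;c_0)$. Then \eqref{eq:c0-implies-bau} gives $B_m(x)\to y$ b.a.u.

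\emph{Main obstacle.} The only point needing care is the norm bound for constant and zero-extended sequences in $L_2(\Ncal;\ell_\infty)$. This is handled by the explicit polar-decomposition factorization above and the finite-subset identity \eqref{eq:finite-subset-norm-identity}. All the analytic weight is carried by \Cref{lem:weighted-prime-L2-Cauchy}.
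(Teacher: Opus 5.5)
Your proposal is correct and follows the paper's proof essentially step for step. You obtain the $L_2$-limit $y$ from the Cauchy property, split the tail $(B_m(x)-y)_{m\ge n}$ into $(B_m(x)-B_n(x))_{m\ge n}$ plus the constant sequence $B_n(x)-y$, approximate by the finitely supported truncations, and conclude via \eqref{eq:c0-implies-bau}. Your added justifications (Hölder's inequality for the single-term bound, the polar-decomposition factorization for constant sequences, and invariance under zero-extension) fill in details the paper leaves implicit.
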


\begin{proof}
By \Cref{lem:weighted-prime-L2-Cauchy},
$(B_N(x))_{N\geq2}$ is Cauchy in $L_2(\Ncal)$.
Hence there exists $y\in L_2(\Ncal)$ such that
\[
  B_N(x)\longrightarrow y
  \qquad\text{in }L_2(\Ncal).
\]
We now show that the convergence is in fact $c_0$-valued. For every
$k\geq2$, the triangle inequality in
$L_2(\Ncal;\ell_\infty)$ gives
\begin{align}
  &
  \left\|
    \bigl(
      B_m(x)-y
    \bigr)_{m\geq k}
  \right\|_{L_2(\Ncal;\ell_\infty)}
  \notag\\
  &\qquad\leq
  \left\|
    \bigl(
      B_m(x)-B_k(x)
    \bigr)_{m\geq k}
  \right\|_{L_2(\Ncal;\ell_\infty)}
  +
  \left\|
    \bigl(
      B_k(x)-y
    \bigr)_{m\geq k}
  \right\|_{L_2(\Ncal;\ell_\infty)}
  \notag\\
  &\qquad\leq
  \left\|
    \bigl(
      B_m(x)-B_k(x)
    \bigr)_{m\geq k}
  \right\|_{L_2(\Ncal;\ell_\infty)}
  +
   \|B_k(x)-y\|_{L_2(\Ncal)}.
  \label{eq:BN-limit-tail-estimate}
\end{align}
Consequently, \eqref{eq:BN-limit-tail-estimate} implies
\begin{align*}
  \lim_{k\to\infty}
  \left\|
    \bigl(
      B_m(x)-y
    \bigr)_{m\geq k}
  \right\|_{L_2(\Ncal;\ell_\infty)}
  =0,
\end{align*}
since
\[
  \lim_{k\to\infty}
  \left\|
    \bigl(
      B_m(x)-B_k(x)
    \bigr)_{m\geq k}
  \right\|_{L_2(\Ncal;\ell_\infty)}
  =0
\]
and $\|B_k(x)-y\|_{L_2(\Ncal)}\to0$.

For $k\geq2$, define the finitely supported sequence
\[
  z^{(k)}_m
  :=
  \begin{cases}
    B_m(x)-y, & 2\leq m<k,\\
    0,        & m\geq k.
  \end{cases}
\]
Then
\[
  \left\|
    \bigl(
      B_m(x)-y
    \bigr)_{m\geq2}
    -
    \bigl(
      z^{(k)}_m
    \bigr)_{m\geq2}
  \right\|_{L_2(\Ncal;\ell_\infty)}
  =
  \left\|
    \bigl(
      B_m(x)-y
    \bigr)_{m\geq k}
  \right\|_{L_2(\Ncal;\ell_\infty)}
  \longrightarrow0.
\]
Since $L_2(\Ncal;c_0)$ is the closure in
$L_2(\Ncal;\ell_\infty)$ of the finitely supported sequences, it
follows that
\begin{equation*}
  \bigl(
    B_N(x)-y
  \bigr)_{N\geq2}
  \in
  L_2(\Ncal;c_0).
\end{equation*}
Finally, by \eqref{eq:c0-implies-bau},
\[
  B_N(x)-y\longrightarrow0
  \qquad\text{b.a.u.},
\]
and hence $B_N(x)\to y$ b.a.u.
\end{proof}

\begin{proof}[Completion of the proof of
\Cref{thm:weighted-prime-maximal-Lp}]
The maximal assertion was proved at the end of
\Cref{sec:weighted-prime-maximal-proof}.  Fix $1<p<\infty$.  Since
$\mathcal S(\mathcal N)$ is dense in $L_p(\Ncal)$,
\Cref{prop:weighted-prime-bau-bounded} gives b.a.u. convergence on a
dense subspace of $L_p(\Ncal)$.  Together with the maximal inequality
\begin{equation*}
  \|(B_Nz)_{N\ge2}\|_{L_p(\Ncal;\ell_\infty)}
  \le
  C_p\|z\|_{L_p(\Ncal)}\quad\forall z\in L_p(\mathcal N),
\end{equation*}
the noncommutative Banach principle extends this convergence to every
$x\in L_p(\Ncal)$.  The limit belongs to $L_p(\Ncal)$ by the same
maximal bound.  This completes the proof.
\end{proof}

\subsection{The noncommutative Bourgain-type quantitative
maximal inequality}
\label{subsec:proof-quantitative-maximal}

Our aim in this subsection is to prove
\Cref{prop:ergodic-grid-oscillation}. The proof is based on the
corresponding estimate for the discrete averages on $\Z$, from which
the ergodic estimate follows by the standard noncommutative
Calder\'on transference argument; see
\cite[Section~4.2]{ChenHongWang+arXiv2024}.

Given a reasonable  $f:\mathbb Z\rightarrow\mathcal M$, define
\[
  \mathcal B_Nf(j)
  :=
  \frac{1}{N}
  \sum_{\substack{p\leq N\\ p\ \mathrm{prime}}}
  (\log p)f(j-p),
  \qquad j\in\Z.
\]
With the convolution convention fixed in
\Cref{subsec:prime-kernels}, we have
\[
  \mathcal B_Nf=K_N*f.
\]

The required discrete estimate is the following.

\begin{proposition}\label{prop:nc-prime-lacunary-oscillation}
There exists a sequence $c_J=c_J(\varepsilon)$ satisfying
$c_J\to0$ as $J\to\infty$ such that
\begin{equation*}
  \frac{1}{J+1}
  \sum_{i=0}^{J}
  \left\|
    \bigl(
      \mathcal B_Nf-\mathcal B_{N_i}f
    \bigr)_{N\in\mathbb D_{\varepsilon,i}}
  \right\|_{L_2(\Acal;\ell_\infty)}^2
  \leq
  c_J\|f\|_{L_2(\Acal)}^2.
\end{equation*}
\end{proposition}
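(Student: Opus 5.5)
We follow Bourgain's scheme for the quantitative inequality, replacing pointwise arguments by norm estimates in $L_2(\Acal;\ell_\infty)$. First we replace $K_N$ by the approximating kernel $L_N$ of \eqref{eq:LN-decomposition}. For $N\in\mathbb D_\varepsilon$ the error $K_N-L_N$ has multiplier of size $C_A(\log N)^{-A}$ by \eqref{eq:approximation-error}. Since $\mathbb D_\varepsilon$ contains only $O_\varepsilon(\log M)$ elements of size about $M$, we can sum over all $N\in\mathbb D_\varepsilon$ with $N\ge N_0$ via the triangle inequality and Plancherel. This gives a bound $\sum_{N\in\mathbb D_\varepsilon}C_A(\log N)^{-A}\|f\|_2\lesssim_\varepsilon \|f\|_2$, since $\sum_r r^{-A}<\infty$. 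The bound is uniform, but to get decay we should split differently: the blocks with $N_i\ge R$ contribute at most $C_\varepsilon (\log R)^{-A+1}\|f\|_2$ each. Blocks with $N_i<R$ number at most $O(\log R)$, because $N_{i+1}>2N_i$, and each is bounded by the maximal inequality. This yields a contribution $\lesssim (\log R)/J+(\log R)^{-2A+2}$ to the average, which tends to zero on choosing $R=R(J)$ appropriately. So it suffices to prove the estimate for $L_N$ in place of $K_N$.

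Next we decompose $L_N=L_N^{(0)}+\sum_{s\ge1}\Tcal_{s,N}$. The tail $s>s_0$ is controlled by \Cref{prop:L2-major-arc-gain}, summed to $C2^{-\delta s_0}\|f\|_2$ for every $i$, so it contributes $\lesssim 2^{-2\delta s_0}$ after averaging. For each fixed $s\le s_0$, we localize in frequency to the disjoint arcs around $a/q$ with $q<Q_s$; there are at most $O(Q_{s_0}^2)$ arcs in total. On each arc the multiplier is $\nu_N(\xi-a/q)\eta(D_s(\xi-a/q))$, a modulated average. By \Cref{lem:sampling}, with the outer $\ell_2$-sum over the index set $I=\{0,\dots,J\}$ of blocks ($r=2$), we reduce to the continuous operators $T_{\Phi_{q,D,N}}$. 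Through the ergodic representation in the proof of \Cref{lem:all-residues}, these are averages $\frac1N\sum_{m\le N}\sigma_{1/q}^m$ composed with a bounded operator. Equivalently, on $\Z$ we can use the representation $\Pcal_{q,D,N}$ and Möbius inversion as in \Cref{lem:reduced-residues}, which costs a factor $d(q)$.

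The key input is then a quantitative oscillation estimate for the usual ergodic averages $\frac1N\sum_{m\le N}\sigma^m g$ of a trace-preserving automorphism in $L_2(\cdot;\ell_\infty)$:
\[
\sum_{i=0}^J\bigl\|(M_Ng-M_{N_i}g)_{N\in\mathbb D_{\varepsilon,i}}\bigr\|_{L_2(\ell_\infty)}^2\le C_\varepsilon\|g\|_2^2
\]
with $C_\varepsilon$ independent of $J$. This is the noncommutative oscillation inequality for ergodic averages, equivalently a square-function or variational bound. It follows from the noncommutative martingale/ergodic oscillation inequalities of Jones–Kaufman–Rosenblatt–Wierdl type, available in the literature after Junge–Xu, via comparison with conditional expectations plus a square function handled by Plancherel. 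Applying it arc by arc and summing the $O_{s_0}(1)$ pieces gives a total of $C_{\varepsilon,s_0}\|f\|_2^2$ for the sum over $i$, hence $C_{\varepsilon,s_0}/(J+1)$ for the average. Setting $c_J=\inf_{s_0,R}\bigl[C_{\varepsilon,s_0}/J+C2^{-2\delta s_0}+(\log R)/J+(\log R)^{2-2A}\bigr]$ gives $c_J\to0$.

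The main obstacle is the last step: establishing the $J$-uniform oscillation inequality in $L_2(\Acal;\ell_\infty)$ and making it compatible with the sampling principle. \Cref{lem:sampling} must be applied with the $\ell_2$-sum over blocks and with differences $\phi_{i,N}-\phi_{i,N_i}$ as symbols, both of which are allowed by its general form. Within each block the dyadic range is used only through the lacunarity $N_{i+1}>2N_i$.
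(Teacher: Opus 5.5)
Your architecture is the paper's: remove $K_N-L_N$, discard $\sum_{s\ge s_0}\Tcal_{s,N}$ using \Cref{prop:L2-major-arc-gain}, treat the finitely many remaining arcs, and let $s_0\to\infty$ slowly with $J$. The difference is in how you handle the arcs.

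\textbf{How the paper handles the arcs.} It proves a continuous oscillation estimate (\Cref{lem:continuous-model-oscillation}). The tools are the operator-valued Sobolev inequality \Cref{lem:nc-sobolev-maximal-parameter} and a frequency decomposition with off-diagonal decay $2^{-|i-\ell|/2}$. It then transfers this to $\Z$ by sampling with $q=1$. Finally it pays for replacing $\widehat k(N\,\cdot)$ by $\nu_N$, using the sparsity of $\mathbb D_\varepsilon$.

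\textbf{How you handle them.} Demodulate by the central unitary $n\mapsto e^{2\pi i na/q}$, which is an isometry of $L_2(\Acal;\ell_\infty)$. Then $\Rcal_{a,q,s,N}$ is the Ces\`aro average $M_N$ of the shift composed with the bounded $\eta(D_s\,\cdot)$ multiplier. So one $J$-uniform oscillation inequality for $M_N$ over $\mathbb D_\varepsilon$ handles every arc, with no discrete--continuous comparison. Your detour through \Cref{lem:sampling}, $T_{\Phi_{q,D,N}}$ and M\"obius inversion is valid but unnecessary.

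\textbf{What each route buys.} Yours is more elementary for the flat model arising from primes. The paper's Sobolev argument is self-contained and controls all real $t$ in each block, uniformly in $\theta_0$ and $D$. It also adapts to kernels such as $k_d$ (\Cref{rem:chw-pointwise-convergence}).

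\textbf{The step you have not proved.} That oscillation inequality is the entire content of the main term, and a pointer to ``the literature'' is not a proof. It is also not ``equivalent'' to a noncommutative variational bound, which you should not rely on. At $p=2$ it is short:
\begin{itemize}
\item Let $E_k$ be the dyadic conditional expectations on $\Z$, tensored with $\mathrm{id}_{\Mcal}$, and set $k(N)=\lfloor\log_2N\rfloor$. Split $M_Ng-M_{N_i}g=(M_N-E_{k(N)})g+(E_{k(N)}-E_{k(N_i)})g-(M_{N_i}-E_{k(N_i)})g$.
\item The middle term equals $E_{k(N)}h_i-h_i$ with $h_i:=E_{k(N_i)}g-E_{k(N_{i+1}-1)}g$. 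Junge's noncommutative Doob inequality bounds its block norm by $C\|h_i\|_2$. The $h_i$ are sums of martingale differences over disjoint ranges, so $\sum_i\|h_i\|_2^2\le\|g\|_2^2$.
\item For the two outer terms, use $\|(x_N)_N\|_{L_2(\Acal;\ell_\infty)}\lesssim(\sum_N\|x_N\|_2^2)^{1/2}$. This holds because, for self-adjoint $x_N$, $\pm x_N\le(\sum_Mx_M^2)^{1/2}$.
\item Combine this with $\sum_{N\in\mathbb D_\varepsilon}\|(M_N-E_{k(N)})g\|_2^2\lesssim_\varepsilon\|g\|_2^2$. That bound follows from the scalar Jones--Kaufman--Rosenblatt--Wierdl estimate for $\sum_k\|(M_{2^k}-E_k)g\|_2^2$, plus Plancherel for $M_N-M_{2^{k(N)}}$. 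Both are Hilbertian, hence valid for $L_2(\Mcal)$-valued $g$. You also use that $\mathbb D_\varepsilon$ has $O(\varepsilon^{-1})$ points per dyadic scale, which is where the dependence on $\varepsilon$ enters.
\end{itemize}

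\textbf{Minor points.} In your first step the splitting at $R$ is unnecessary: $(\sum_ie_i^2)^{1/2}\le\sum_ie_i\lesssim_\varepsilon\|f\|_2$ already gives $O((J+1)^{-1})$ after averaging. Your soft choice of $s_0=s_0(J)$ via an infimum is legitimate, because for each fixed $s_0$ the main-term constant is finite.
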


Assuming
\Cref{prop:nc-prime-lacunary-oscillation}, whose proof will be given
at the end of this subsection, we now prove
\Cref{prop:ergodic-grid-oscillation}.

\begin{proof}[Reduction of \Cref{prop:ergodic-grid-oscillation} to
\Cref{prop:nc-prime-lacunary-oscillation}]
For $L>N_{J+1}$, define
\[
  f_L(j)
  :=
  \gamma^{-j}(x)\1_{\{1\leq j\leq L\}},
  \qquad j\in\Z.
\]
If $N\leq N_{J+1}$ and
$N_{J+1}+1\leq j\leq L$, then
\begin{equation}\label{eq:orbit-transference-identity-new}
  \mathcal B_Nf_L(j)
  =
  \gamma^{-j}(B_N(x)).
\end{equation}
Moreover, since $\gamma$ is trace-preserving,
\[
  \|f_L\|_{L_2(\Acal)}^2
  =
  \sum_{j=1}^{L}
  \|\gamma^{-j}(x)\|_{L_2(\Ncal)}^2
  =
  L\|x\|_{L_2(\Ncal)}^2.
\]
Applying \Cref{prop:nc-prime-lacunary-oscillation} to $f_L$,
restricting the spatial variable to
\[
  N_{J+1}+1\leq j\leq L,
\]
and using \eqref{eq:orbit-transference-identity-new}, we obtain
\begin{align*}
  \frac{L-N_{J+1}}{J+1}
  \sum_{i=0}^{J}
  \left\|
    \bigl(
      B_N(x)-B_{N_i}(x)
    \bigr)_{N\in\mathbb D_{\varepsilon,i}}
  \right\|_{L_2(\Ncal;\ell_\infty)}^2
  \leq
  c_JL\|x\|_{L_2(\Ncal)}^2.
\end{align*}
 Dividing by $L-N_{J+1}$ and letting
$L\to\infty$ gives \eqref{eq:ergodic-grid-oscillation}.
\end{proof}

It remains to prove
\Cref{prop:nc-prime-lacunary-oscillation}.
We follow the general
scheme of Bourgain's quantitative maximal argument, see
\cite{BourgainMaximal,BourgainApproach}. One key intermediate estimate is \eqref{periodic-continuous-model-oscillation} appearing in Corollary \ref{lem:periodic-continuous-model-oscillation}. We need several
auxiliary results on $\mathbb R$. 
The first one is an operator-space analogue of
the Sobolev inequality.
Let $W^{1,2}\bigl(\R;L_2(\Mcal)\bigr)$ denote the space of all
strongly measurable functions
$F\colon\R\to L_2(\Mcal)$ such that both $F$ and its weak derivative
$F'$ belong to $L_2\bigl(\R;L_2(\Mcal)\bigr)$.

\begin{lemma}
\label{lem:nc-sobolev-maximal-parameter}
Let $ F\in W^{1,2}\bigl(\R;L_2(\Mcal)\bigr)$. Then
\begin{equation}\label{eq:nc-sobolev-maximal-parameter}
\begin{aligned}
  \left\|
    (F(s))_{s\in\R}
  \right\|_{L_2(\Mcal;\ell_\infty)}^2
  &\leq
  2
  \left(
    \int_\R
    \|F(s)\|_{L_2(\Mcal)}^2\,ds
  \right)^{1/2}
  \left(
    \int_\R
    \|F'(s)\|_{L_2(\Mcal)}^2\,ds
  \right)^{1/2}.
\end{aligned}
\end{equation}
\end{lemma}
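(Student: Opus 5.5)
The plan is to build explicit dominating operators for $|F(s)|^2$ and $|F(s)^*|^2$ and then produce a factorization $F(s)=a\,y_s\,b$ directly, rather than passing through interpolation. The key ingredient is the following operator-valued version of the scalar identity $|F(s)|^2=-\int_s^\infty \frac{d}{dt}|F(t)|^2\,dt$. By density (truncation and mollification in $s$, which is continuous for both $\int\|F\|_2^2$ and $\int\|F'\|_2^2$), I will first assume $F$ is smooth with compact support. Lower semicontinuity of the $L_2(\Mcal;\ell_\infty)$-norm, together with \eqref{eq:finite-subset-norm-identity} applied to finite sets of parameters, then passes the bound back to general $F$.

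\textbf{Step 1 (pointwise operator majorants).} For smooth compactly supported $F$, the map $t\mapsto F(t)^*F(t)$ is differentiable in $L_1(\Mcal)$, with derivative $F'(t)^*F(t)+F(t)^*F'(t)$. Hence
\[
  F(s)^*F(s)=-\int_s^\infty\bigl(F'(t)^*F(t)+F(t)^*F'(t)\bigr)\,dt
\]
as a Bochner integral in $L_1(\Mcal)$. For every $\lambda>0$ we have the operator inequality
\[
  -(X^*Y+Y^*X)\le \lambda X^*X+\lambda^{-1}Y^*Y,
\]
which comes from expanding $(\lambda^{1/2}X+\lambda^{-1/2}Y)^*(\lambda^{1/2}X+\lambda^{-1/2}Y)\ge0$. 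Applying it with $X=F'(t)$, $Y=F(t)$, and enlarging the integration range to all of $\R$ (the integrand is then positive), I get for every $s$
\[
  |F(s)|^2\le A_\lambda:=\int_\R\bigl(\lambda|F'(t)|^2+\lambda^{-1}|F(t)|^2\bigr)\,dt .
\]
Similarly, $|F(s)^*|^2\le B_\lambda:=\int_\R\bigl(\lambda|F'(t)^*|^2+\lambda^{-1}|F(t)^*|^2\bigr)dt$. Both $A_\lambda$ and $B_\lambda$ are positive and lie in $L_1(\Mcal)$. Since $\tau(|x|^2)=\tau(|x^*|^2)=\|x\|_2^2$, they have the same trace,
\[
  \tau(A_\lambda)=\tau(B_\lambda)=\lambda\int_\R\|F'\|_2^2+\lambda^{-1}\int_\R\|F\|_2^2 .
\]

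\textbf{Step 2 (factorization).} Set $a=A_\lambda^{1/2}$ and $b=B_\lambda^{1/2}$, both in $L_2(\Mcal)_+$. From $x^*x\le a^2$ and $xx^*\le b^2$ I will use the standard two-sided Douglas/polar-decomposition lemma (as in Junge--Xu): such an $x$ can be written $x=b^{1/2}wa^{1/2}$ with $\|w\|_\infty\le1$. To prove it, write $x=u|x|$. Then $|x|\le a$ and $|x^*|\le b$ by operator monotonicity of the square root, and $|x^*|=u|x|u^*$. One then writes
\[
  x=|x^*|^{1/2}u|x|^{1/2},
\]
and absorbs $|x^*|^{1/2}=b^{1/2}c$ and $|x|^{1/2}=d\,a^{1/2}$ with contractions $c,d$ (Douglas lemma again, using $|x^*|\le b$ and $|x|\le a$). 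Applying this for each $s$ gives the factorization $F(s)=b^{1/2}w_sa^{1/2}$ with $\sup_s\|w_s\|_\infty\le1$ and $a^{1/2},b^{1/2}\in L_4(\Mcal)$. By the definition of the norm,
\[
  \|(F(s))_s\|_{L_2(\Mcal;\ell_\infty)}^2\le \|b^{1/2}\|_4^2\|a^{1/2}\|_4^2=\tau(B_\lambda)^{1/2}\tau(A_\lambda)^{1/2}=\lambda\!\int\!\|F'\|_2^2+\lambda^{-1}\!\int\!\|F\|_2^2 .
\]

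\textbf{Step 3 (optimize).} Choosing $\lambda=\bigl(\int\|F\|_2^2/\int\|F'\|_2^2\bigr)^{1/2}$ gives exactly the bound $2\bigl(\int\|F\|_2^2\bigr)^{1/2}\bigl(\int\|F'\|_2^2\bigr)^{1/2}$, which is \eqref{eq:nc-sobolev-maximal-parameter} with the constant $2$. The degenerate case $F'=0$ forces $F=0$ for $F\in W^{1,2}$.

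The main obstacle I expect is Step 2: making the factorization lemma rigorous for unbounded $\tau$-measurable operators, in particular when $a$ or $b$ has nontrivial kernel, so that the contractions are defined on the correct supports. I would handle this by replacing $a,b$ with $a+\delta e$, $b+\delta e$, where $e$ is a finite-trace projection carrying the supports; this is possible in the compactly supported smooth case after truncating $F$ to $\mathcal S(\Mcal)$-valued functions. I would then let $\delta\to0$. The differentiation of $|F(t)|^2$ in $L_1(\Mcal)$ in Step 1 is routine by comparison.
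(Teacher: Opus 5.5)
Your proposal is correct and follows essentially the same route as the paper: the fundamental theorem of calculus for $F^*F$ and $FF^*$, the inequality $\pm(X^*Y+Y^*X)\le\lambda X^*X+\lambda^{-1}Y^*Y$ giving $s$-independent majorants whose trace is $\lambda\int\|F'\|_2^2+\lambda^{-1}\int\|F\|_2^2$, the two-sided Douglas factorization into $L_4$ factors, and optimization in $\lambda$. The differences are cosmetic. You integrate over $(s,\infty)$ instead of $(-\infty,s)$, and you derive the two-sided factorization from the polar decomposition plus the one-sided Douglas lemma. You are also more explicit about the approximation step, which the paper handles by working with finite sums $\sum_j\phi_j x_j$, $x_j\in\Mcal\cap L_2(\Mcal)$, so that all operators involved are bounded.
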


\begin{proof}
Finite sums of the form
\[
  F_0(s)
  =
  \sum_{j=1}^m\phi_j(s)x_j,
  \qquad
  \phi_j\in C_c^\infty(\R),
  \quad
  x_j\in\Mcal\cap L_2(\Mcal),
\]
are dense in $W^{1,2}(\R;L_2(\Mcal))$. By approximation, it suffices to prove the estimate for such an $F_0$, which we simply denote by $F$.

For $s\in\R$, the fundamental theorem of calculus gives
\[
  F(s)^*F(s)
  =
  \int_{-\infty}^s
  \bigl(
    F'(u)^*F(u)+F(u)^*F'(u)
  \bigr)\,du.
\]
For every $\lambda>0$, the inequality
\[
  |\lambda^{1/2}X-\lambda^{-1/2}Y|^2\geq0
\]
implies
\[
  X^*Y+Y^*X
  \leq
  \lambda X^*X+\lambda^{-1}Y^*Y.
\]
Consequently,
\begin{equation}\label{eq:FstarF-majorant}
  F(s)^*F(s)
  \leq
  \lambda
  \int_\R F'(u)^*F'(u)\,du
  +
  \lambda^{-1}
  \int_\R F(u)^*F(u)\,du.
\end{equation}
Applying the same argument to $F(s)^*$ gives
\begin{equation}\label{eq:FFstar-majorant}
  F(s)F(s)^*
  \leq
  \lambda
  \int_\R F'(u)F'(u)^*\,du
  +
  \lambda^{-1}
  \int_\R F(u)F(u)^*\,du.
\end{equation}

Set
\begin{equation*}
\begin{aligned}
  A_\lambda^2
  &:=
  \lambda
  \int_\R F'(u)F'(u)^*\,du
  +
  \lambda^{-1}
  \int_\R F(u)F(u)^*\,du,\\
  B_\lambda^2
  &:=
  \lambda
  \int_\R F'(u)^*F'(u)\,du
  +
  \lambda^{-1}
  \int_\R F(u)^*F(u)\,du.
\end{aligned}
\end{equation*}
We use the following two-sided form of the Douglas factorization
lemma for positive operators $A$ and $B$; see \cite{Douglas} and
\cite[Proposition~1.3.2]{BhatiaPositive}:
\[
  xx^*\leq A^2,
  \qquad
  x^*x\leq B^2
  \quad\Longrightarrow\quad
  x=A^{1/2}yB^{1/2}
\]
for some $y$ satisfying
\[
  \|y\|_{L_\infty(\Mcal)}\leq1.
\]
Applying this implication to \eqref{eq:FstarF-majorant} and
\eqref{eq:FFstar-majorant}, we obtain contractions $Y_s\in\Mcal$
such that
\begin{equation*}
  F(s)
  =
  A_\lambda^{1/2}Y_sB_\lambda^{1/2},
  \qquad
  \sup_s\|Y_s\|_{L_\infty(\Mcal)}\leq1.
\end{equation*}
Therefore, by the factorization definition of
$L_2(\Mcal;\ell_\infty)$,
\begin{equation*}
  \|(F(s))_s\|_{L_2(\Mcal;\ell_\infty)}
  \leq
  \|A_\lambda^{1/2}\|_{L_4(\Mcal)}
  \|B_\lambda^{1/2}\|_{L_4(\Mcal)}.
\end{equation*}

Moreover,
\begin{equation*}
\begin{aligned}
  \tau(A_\lambda^2)
  &=
  \lambda
  \int_\R\|F'(u)\|_{L_2(\Mcal)}^2\,du
  +
  \lambda^{-1}
  \int_\R\|F(u)\|_{L_2(\Mcal)}^2\,du,\\
  \tau(B_\lambda^2)
  &=
  \lambda
  \int_\R\|F'(u)\|_{L_2(\Mcal)}^2\,du
  +
  \lambda^{-1}
  \int_\R\|F(u)\|_{L_2(\Mcal)}^2\,du.
\end{aligned}
\end{equation*}
Since
\[
  \|A_\lambda^{1/2}\|_{L_4(\Mcal)}^4
  =
  \tau(A_\lambda^2),
  \qquad
  \|B_\lambda^{1/2}\|_{L_4(\Mcal)}^4
  =
  \tau(B_\lambda^2),
\]
we obtain
\[
  \|(F(s))_s\|_{L_2(\Mcal;\ell_\infty)}^2
  \leq
  \lambda
  \int_\R\|F'(u)\|_{L_2(\Mcal)}^2\,du
  +
  \lambda^{-1}
  \int_\R\|F(u)\|_{L_2(\Mcal)}^2\,du.
\]
Optimizing in $\lambda$ proves the desired estimate
\eqref{eq:nc-sobolev-maximal-parameter}.
\end{proof}

Fix a smooth function $\eta$ supported in $(-1/2,1/2)$, and choose
$\eta_0\in C_c^\infty(-1/2,1/2)$ such that
\[
  \eta_0=1
  \qquad\text{on }\supp\eta.
\]
The second auxiliary result that we need is  the following estimate for Fourier multipliers on
$\R$.  Its proof is based on the preceding operator-valued Sobolev
inequality.

\begin{lemma}\label{lem:continuous-annular-maximal-multiplier}
Let $m\in C^1(\R\setminus\{0\})$ be supported in
\[
  \left\{
    u\in\R:
    \frac12\leq |u|\leq2
  \right\}
\]
and suppose that
\[
  \|m\|_{L_\infty(\R)}\leq A,
  \qquad
  \sup_{u\neq0}|u m'(u)|\leq B.
\]
For $r>0$ and $D\geq1$, let $T_{m,r,D}$ be the Fourier
multiplier on 
with symbol
\[
  \xi\longmapsto m(r\xi)\eta_0(D\xi).
\]
Then
\begin{equation}\label{eq:continuous-annular-maximal}
  \left\|
    \bigl(T_{m,r,D}h\bigr)_{r>0}
  \right\|_{
    L_2\bigl(
      L_\infty(\R)\overline{\otimes}\Mcal;
      \ell_\infty
    \bigr)
  }
  \lesssim_\eta
  (AB)^{1/2}
  \|h\|_{L_2(L_\infty(\R)\overline{\otimes}\Mcal)}.
\end{equation}
The implicit constant is independent of $D$.
\end{lemma}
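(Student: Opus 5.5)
The plan is to apply \Cref{lem:nc-sobolev-maximal-parameter} in the logarithmic scale variable, pointwise in the space variable of $\R$. First, by \eqref{eq:finite-subset-norm-identity} and density, we may assume $h$ is a nice Schwartz-type $\Mcal$-valued function, so that everything below is smooth in $r$. Then substitute $r=e^{t}$ and set
\[
  F(t):=T_{m,e^t,D}h\in L_2\bigl(L_\infty(\R)\overline{\otimes}\Mcal\bigr),\qquad t\in\R .
\]
The family $(T_{m,r,D}h)_{r>0}$ coincides with $(F(t))_{t\in\R}$. Apply \Cref{lem:nc-sobolev-maximal-parameter} with the von Neumann algebra $L_\infty(\R)\overline{\otimes}\Mcal$ in place of $\Mcal$. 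This gives
\[
  \bigl\|(F(t))_t\bigr\|_{L_2(\cdot;\ell_\infty)}^2\le 2\Bigl(\int_\R\|F(t)\|_2^2\,dt\Bigr)^{1/2}\Bigl(\int_\R\|F'(t)\|_2^2\,dt\Bigr)^{1/2}.
\]

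Both integrals are computed by operator-valued Plancherel. The key point is that the trace on $L_\infty(\R)\overline{\otimes}\Mcal$ is $\int\otimes\tau$, so $\|F(t)\|_2^2=\int_\R|m(e^t\xi)|^2|\eta_0(D\xi)|^2\|\widehat h(\xi)\|_{L_2(\Mcal)}^2\,d\xi$. Fubini then gives
\[
  \int_\R\|F(t)\|_2^2\,dt=\int_\R|\eta_0(D\xi)|^2\|\widehat h(\xi)\|_{L_2(\Mcal)}^2\Bigl(\int_\R|m(e^t\xi)|^2dt\Bigr)d\xi .
\]
For fixed $\xi\neq0$, the inner integral equals $\int_0^\infty|m(\pm u)|^2\,du/u$. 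Since $m$ is supported in $\{1/2\le|u|\le2\}$, this is at most $A^2\log 4$. Using $|\eta_0|\le C_\eta$, we get $\int\|F\|_2^2\lesssim_\eta A^2\|h\|_2^2$, uniformly in $D$. For the derivative, $\partial_t[m(e^t\xi)]=(e^t\xi)m'(e^t\xi)$, which is bounded by $B$ and supported where $1/2\le|e^t\xi|\le2$. The same computation therefore gives $\int\|F'\|_2^2\lesssim_\eta B^2\|h\|_2^2$. Multiplying the square roots yields $\|(F(t))\|^2\lesssim AB\|h\|_2^2$, which is \eqref{eq:continuous-annular-maximal}.

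The main technical point I expect is justifying that $t\mapsto F(t)$ lies in $W^{1,2}(\R;L_2(L_\infty(\R)\overline{\otimes}\Mcal))$, with weak derivative equal to the multiplier with symbol $e^t\xi\,m'(e^t\xi)\eta_0(D\xi)$. I plan to handle this by differentiating the Fourier side under the integral (dominated convergence, using $m\in C^1$ off $0$ and the support restriction). For $h$ with $\widehat h$ bounded and compactly supported in $L_2(\Mcal)$, this is routine, and $F$ is then in $L_2$ in $t$ by the computation above. The remaining step is to pass from such $h$ to general $h$. Because the bound is linear in $\|h\|_2$, the maximal operator extends by continuity. Finally, the restriction to finite index sets in \eqref{eq:finite-subset-norm-identity} ensures no measurability issue arises from the uncountable family $r>0$.
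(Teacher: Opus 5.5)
Your proposal is correct and follows essentially the same route as the paper: set $F(s)=T_{m,e^s,D}h$, bound $\int_\R\|F(s)\|_2^2\,ds\lesssim_\eta A^2\|h\|_2^2$ and $\int_\R\|F'(s)\|_2^2\,ds\lesssim_\eta B^2\|h\|_2^2$ via Plancherel and the substitution $u=e^s|\xi|$, then apply \Cref{lem:nc-sobolev-maximal-parameter} on $L_\infty(\R)\overline{\otimes}\Mcal$. Your extra remarks fill in details the paper leaves implicit, namely the $W^{1,2}$ regularity for nice $h$ and the passage to general $h$ through finite index sets via \eqref{eq:finite-subset-norm-identity}.
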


\begin{proof}
For
$h\in L_2(L_\infty(\R)\overline{\otimes}\Mcal)$, put
\[
  F(s)
  :=
  T_{m,e^s,D}h,
  \qquad s\in\R.
\]
By Plancherel's theorem and the change of variables
$u=e^s|\xi|$,
\[
  \int_\R
  \|F(s)\|_{L_2(L_\infty(\R)\overline{\otimes}\Mcal)}^2\,ds
  \lesssim_\eta
  A^2
  \|h\|_{L_2(L_\infty(\R)\overline{\otimes}\Mcal)}^2.
\]
Moreover,
\[
  \widehat{F'(s)}(\xi)
  =
  e^s\xi m'(e^s\xi)
  \eta_0(D\xi)\widehat h(\xi),
\]
and hence
\[
  \int_\R
  \|F'(s)\|_{L_2(L_\infty(\R)\overline{\otimes}\Mcal)}^2\,ds
  \lesssim_\eta
  B^2
  \|h\|_{L_2(L_\infty(\R)\overline{\otimes}\Mcal)}^2.
\]
Applying \Cref{lem:nc-sobolev-maximal-parameter} with
$L_\infty(\R)\overline{\otimes}\Mcal$ in place of $\Mcal$ proves
\eqref{eq:continuous-annular-maximal}.
\end{proof}

Let
\[
  \widehat k(\xi)
  :=
  \int_0^1e^{-2\pi i u\xi}\,du.
\]
For $t>0$ and $D\geq1$, let $S_{t,D}$ and
$T_{\eta,D}$ be the Fourier multipliers on
$L_2(L_\infty(\R)\overline{\otimes}\Mcal)$ with symbols
\[
  \xi\longmapsto
  \widehat k(t\xi)\eta_0(D\xi)
  \qquad\text{and}\qquad
  \xi\longmapsto\eta(D\xi),
\]
respectively.  Since $\eta_0=1$ on $\supp\eta$, the composition
$S_{t,D}T_{\eta,D}$ has symbol
\[
  \xi\longmapsto
  \widehat k(t\xi)\eta(D\xi).
\]

We shall use
\begin{equation}\label{eq:model-multiplier-elementary-estimates}
\begin{aligned}
  |\widehat k(u)|
  &\lesssim |u|^{-1},
  &
  |u\widehat k'(u)|
  &\lesssim1,
  && |u|\geq\frac12,\\
  |\widehat k(u)-1|
  &\lesssim |u|,
  &
  |u\widehat k'(u)|
  &\lesssim |u|,
  && |u|\leq2.
\end{aligned}
\end{equation}
Moreover, the noncommutative Hardy--Littlewood maximal inequality
\cite{Mei+MAMS2007} gives
\begin{equation}\label{eq:continuous-model-global-maximal}
  \left\|
    \bigl(S_{t,D}h\bigr)_{t>0}
  \right\|_{
    L_2\bigl(
      L_\infty(\R)\overline{\otimes}\Mcal;
      \ell_\infty
    \bigr)
  }
  \lesssim_\eta
  \|h\|_{L_2(L_\infty(\R)\overline{\otimes}\Mcal)},
\end{equation}
uniformly in $D$.

The key auxiliary result that we need is the following quantitative maximal inequality on $\mathbb R$.
\begin{lemma}\label{lem:continuous-model-oscillation}
Let $\theta_0\in\R$ and $D\geq1$, and let
\[
  1<N_0<N_1<\cdots<N_{J+1},
  \qquad
  N_{i+1}>2N_i.
\]
For $t>0$, define $S_{\theta_0,t,D}$ on
$L_2(L_\infty(\R)\overline{\otimes}\Mcal)$ by
\[
  \widehat{S_{\theta_0,t,D}f}(\xi)
  :=
  \widehat k\bigl(t(\xi-\theta_0)\bigr)
  \eta\bigl(D(\xi-\theta_0)\bigr)
  \widehat f(\xi).
\]
Then
\begin{equation}\label{eq:continuous-model-oscillation}
  \sum_{i=0}^{J}
  \left\|
    \bigl(
      S_{\theta_0,t,D}f
      -
      S_{\theta_0,N_i,D}f
    \bigr)_{N_i\leq t<N_{i+1}}
  \right\|_{
    L_2\bigl(
      L_\infty(\R)\overline{\otimes}\Mcal;
      \ell_\infty
    \bigr)
  }^2
  \lesssim_\eta
  \|f\|_{L_2(L_\infty(\R)\overline{\otimes}\Mcal)}^2,
\end{equation}
uniformly in $\theta_0$, $D$, $J$, and $(N_i)$.
\end{lemma}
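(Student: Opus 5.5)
First I would remove the frequency $\theta_0$. Multiplication by $e^{2\pi i\theta_0 x}$ is an isometry of $L_2(L_\infty(\R)\overline{\otimes}\Mcal)$, and it carries the family $(S_{\theta_0,t,D}f)_t$ to $(e^{2\pi i\theta_0 x}S_{0,t,D}g)_t$. Left multiplication by a fixed unimodular function keeps the factorization norm of $L_2(\cdot;\ell_\infty)$ unchanged. So it suffices to take $\theta_0=0$, where $S_{0,t,D}=S_{t,D}T_{\eta,D}$. Put $h:=T_{\eta,D}f$, so $\|h\|_2\lesssim_\eta\|f\|_2$.

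Next I would fix a Littlewood--Paley-type decomposition. Choose a smooth even $\psi_0$ with $\psi_0=1$ on $[-1,1]$ and support in $[-2,2]$. Set $\psi_j(\xi):=\psi_0(2^{-j}\xi)-\psi_0(2^{-j+1}\xi)$ for $j\ge1$. For the $i$-th block put
\[
  h=\psi_0(N_i\,\cdot)^\vee*h+\sum_{j\ge1}\psi_j(N_i\,\cdot)^\vee*h=:h_i^{\mathrm{low}}+\sum_{j\ge1}h_{i,j}.
\]
\textbf{Low frequencies} $|\xi|\lesssim N_i^{-1}$. Here $t\in[N_i,N_{i+1})$, and I would write $(S_{t,D}-S_{N_i,D})h_i^{\mathrm{low}}$ as $T_{m_i,t}h_i^{\mathrm{low}}$ with symbol $(\widehat k(t\xi)-\widehat k(N_i\xi))\eta_0(D\xi)\psi_0(N_i\xi)$. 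I would run the Sobolev argument of \Cref{lem:nc-sobolev-maximal-parameter} in $s=\log t$ on $[\log N_i,\log N_{i+1}]$. By \eqref{eq:model-multiplier-elementary-estimates}, the $s$-derivative $t\xi\widehat k'(t\xi)$ has size $\min(t|\xi|,1)$. Also $|F(s)|\lesssim\min(t|\xi|,1)+N_i|\xi|$. Integrating in $s$ and applying Plancherel bounds both $\int\|F\|^2$ and $\int\|F'\|^2$ by $\int w_i(\xi)|\widehat h(\xi)|^2\,d\xi$, with weight $w_i(\xi)\lesssim\min(N_{i+1}|\xi|,1)^2+$ (terms localized near $|\xi|\sim N_i^{-1}$). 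Because $N_{i+1}>2N_i$, the weights $\min(N_{i+1}|\xi|,1)^2\mathbf 1_{|\xi|\lesssim N_i^{-1}}$ are summable in $i$ uniformly: for fixed $\xi$ they are dominated by a geometric series in $N_{i+1}|\xi|$ over the $i$ with $N_i\lesssim|\xi|^{-1}$. Summing over $i$ then gives $\lesssim\|h\|_2^2$. The endpoint $t\to N_i$ causes no trouble, since the difference vanishes there; this lets me use the half-line version of the Sobolev lemma with $F(\log N_i)=0$.

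\textbf{High frequencies} $|\xi|\sim2^jN_i^{-1}$ with $j\ge1$. Here I would not difference, and bound each of $S_{t,D}h_{i,j}$ and $S_{N_i,D}h_{i,j}$ separately by \Cref{lem:continuous-annular-maximal-multiplier}. I apply it with rescaled symbols $m(u)=\widehat k(2^ju)\psi(u)$, over all $t\ge N_i$, after rescaling $t=2^{-j}N_i r$. The first row of \eqref{eq:model-multiplier-elementary-estimates} gives $A\lesssim2^{-j}$ and $B\lesssim2^{-j}\cdot$ const (up to $O(1)$ factors from $\psi$). So the $j$-th piece has $L_2(\ell_\infty)$-norm $\lesssim2^{-j/2}\|h_{i,j}\|_2$. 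Summing over $j$ with Cauchy--Schwarz gives, per block,
\[
  \bigl(\textstyle\sum_j 2^{-j/2}\|h_{i,j}\|_2\bigr)^2\lesssim\sum_j2^{-j/4}\|h_{i,j}\|_2^2 .
\]
Summing over $i$, each frequency $\xi$ appears in pieces $h_{i,j}$ with $2^j\sim N_i|\xi|$. Since the $N_i$ are lacunary, these $j$ are distinct up to bounded multiplicity, so $\sum_i\sum_j2^{-j/4}|\psi_j(N_i\xi)|^2\lesssim1$ and the total is $\lesssim\|h\|_2^2$. The triangle inequality in $L_2(\ell_\infty)$ combines the low and high parts. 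The cutoff $\eta_0(D\xi)$ is harmless throughout, since all bounds are uniform in $D$.

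The main obstacle is the low-frequency square summation. I need the Sobolev bound on each block to produce a weight whose sum over lacunary $i$ stays bounded. The product form in \Cref{lem:nc-sobolev-maximal-parameter} must be converted, via $2ab\le a^2+b^2$, into the additive bound $\int(|F|^2+|F'|^2)\,ds$. I would check carefully that the $F$-term, which integrates $\min(t|\xi|,1)^2$ over $\log t\in[\log N_i,\log N_{i+1}]$, does not introduce a factor $\log(N_{i+1}/N_i)$. If it does, I would use instead $|F|\lesssim t|\xi|$ with the weight $\int_{N_i}^{\min(N_{i+1},|\xi|^{-1})}(t\xi)^2\,dt/t\lesssim\min(N_{i+1}|\xi|,1)^2$. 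That weight is still geometrically summable over $i$.
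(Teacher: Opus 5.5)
Your modulation reduction and your high-frequency analysis are essentially fine, but the low-frequency step breaks down. On the block $N_i\le t<N_{i+1}$, the piece $h_i^{\mathrm{low}}$ contains the band $N_{i+1}^{-1}<|\xi|\le 2N_i^{-1}$. On this band $t|\xi|$ runs from $O(1)$ up to $N_{i+1}|\xi|$, and $N_{i+1}/N_i$ is unbounded, since only $N_{i+1}>2N_i$ is assumed. For $t|\xi|\gg1$ two things happen. First, $|t\xi\,\widehat k'(t\xi)|=|e^{-2\pi i t\xi}-\widehat k(t\xi)|\approx1$; it does not decay. Second, $|\widehat k(t\xi)-\widehat k(N_i\xi)|\approx|\widehat k(N_i\xi)|$, which is $\gtrsim1$ for instance when $N_i|\xi|\in[1/2,3/4]$. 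So for $h$ with frequencies there, both $\int\|F(s)\|^2\,ds$ and $\int\|F'(s)\|^2\,ds$ over $[\log N_i,\log N_{i+1})$ are of order $\log(N_{i+1}/N_i)\|h\|_2^2$. Consequently \Cref{lem:nc-sobolev-maximal-parameter}, on the half-line or not, can only give a block bound that grows with $\log(N_{i+1}/N_i)$. The weight $\min(N_{i+1}|\xi|,1)^2$ you claim is therefore not what the computation produces. The extra term is $\log(N_{i+1}|\xi|)$, spread over the whole band rather than localized near $N_i^{-1}$. Your fallback does not repair this. Integrating $(t\xi)^2$ only up to $\min(N_{i+1},|\xi|^{-1})$ simply discards the range $|\xi|^{-1}\le t<N_{i+1}$, which still belongs to the block, and it does not address the $F'$ term at all. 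No smoothness-in-$\log t$ argument can control a frequency band on which $t|\xi|$ crosses $1$ inside a block of unbounded logarithmic length.

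What is missing is a genuinely maximal estimate on that band: the Hardy--Littlewood bound \eqref{eq:continuous-model-global-maximal}, which controls $(S_{t,D}g)_{t>0}$ over all scales at once. It costs nothing in the square sum, because for each $\xi$ at most two blocks $i$ satisfy $N_{i+1}^{-1}\le|\xi|\le 2N_i^{-1}$. This is precisely the diagonal case $|i-\ell|\le1$ in the paper, whose pieces $E_\ell=\{N_{\ell+1}^{-1}<|\xi|\le N_\ell^{-1}\}$ are adapted to both endpoints of each block, not just to $N_i$. You can repair your argument by splitting $\psi_0(N_i\xi)=\psi_0(N_{i+1}\xi)+\bigl(\psi_0(N_i\xi)-\psi_0(N_{i+1}\xi)\bigr)$. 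For the first piece, $t|\xi|<2$ throughout the block and both integrands are $\lesssim t|\xi|$. Your half-line Sobolev argument then gives the weight $(N_{i+1}|\xi|)^2\mathbf 1_{\{|\xi|\le 2N_{i+1}^{-1}\}}$, which is geometrically summable in $i$; this is a legitimate alternative to the paper's annular-multiplier treatment of the case $\ell\ge i+2$. For the second piece, apply \eqref{eq:continuous-model-global-maximal} to $S_{t,D}$ and $S_{N_i,D}$ separately.

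There is also a smaller slip in your high-frequency step. \Cref{lem:continuous-annular-maximal-multiplier} needs the annular cutoff to dilate with $t$, so over all $t\ge N_i$ you must sum over shells $|t\xi|\sim2^k$, $k\ge j-O(1)$, each contributing $2^{-k/2}$. Here $B\simeq1$, not $2^{-j}$, because $|u\widehat k'(u)|\to1$. The total is still $2^{-j/2}$, as you state.
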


\begin{proof}
By modulation, it suffices to consider $\theta_0=0$.
Put
\[
  g:=T_{\eta,D}f.
\]
Then
\[
  S_{0,t,D}f
  =
  S_{t,D}g.
\]
Extend $(N_i)_{i=0}^{J+1}$ to a two-sided sequence
$(N_i)_{i\in\Z}\subset(0,\infty)$, without changing its prescribed
terms, such that $N_{i+1}>2N_i$ for every $i\in\Z$.

For $\ell\in\Z$, define
\[
  E_\ell
  :=
  \left\{
    \xi\in\R:
    N_{\ell+1}^{-1}<|\xi|\leq N_\ell^{-1}
  \right\},
  \qquad
  \widehat{g_\ell}
  :=
  \1_{E_\ell}\widehat g.
\]
The sets $E_\ell$ are pairwise disjoint and cover
$\R\setminus\{0\}$.  Therefore,
\begin{equation}\label{eq:continuous-frequency-orthogonality}
\begin{aligned}
  g
  &=
  \sum_{\ell\in\Z}g_\ell
  \quad\text{in }
  L_2(L_\infty(\R)\overline{\otimes}\Mcal),
  \\
  \sum_{\ell\in\Z}
  \|g_\ell\|_{L_2(L_\infty(\R)\overline{\otimes}\Mcal)}^2
  &=
  \|g\|_{L_2(L_\infty(\R)\overline{\otimes}\Mcal)}^2
  \lesssim_\eta
  \|f\|_{L_2(L_\infty(\R)\overline{\otimes}\Mcal)}^2.
\end{aligned}
\end{equation}

For $i,\ell\in\Z$, set
\[
  \Omega_i(g_\ell)
  :=
  \left\|
    \bigl(
      S_{t,D}g_\ell
      -
      S_{N_i,D}g_\ell
    \bigr)_{N_i\leq t<N_{i+1}}
  \right\|_{
    L_2\bigl(
      L_\infty(\R)\overline{\otimes}\Mcal;
      \ell_\infty
    \bigr)
  }.
\]
We claim that
\begin{equation}\label{eq:continuous-model-block-off-diagonal}
  \Omega_i(g_\ell)
  \lesssim_\eta
  2^{-|i-\ell|/2}
  \|g_\ell\|_{L_2(L_\infty(\R)\overline{\otimes}\Mcal)}.
\end{equation}
Assuming this claim, the triangle inequality gives
\begin{align*}
  &\left\|
    \bigl(
      S_{t,D}g
      -
      S_{N_i,D}g
    \bigr)_{N_i\leq t<N_{i+1}}
  \right\|_{
    L_2\bigl(
      L_\infty(\R)\overline{\otimes}\Mcal;
      \ell_\infty
    \bigr)
  }
  \\
  &\qquad\leq
  \sum_{\ell\in\Z}\Omega_i(g_\ell)
  \lesssim_\eta
  \sum_{\ell\in\Z}
  2^{-|i-\ell|/2}
  \|g_\ell\|_{L_2(L_\infty(\R)\overline{\otimes}\Mcal)}.
\end{align*}
Young's inequality and
\eqref{eq:continuous-frequency-orthogonality} now imply
\eqref{eq:continuous-model-oscillation}.

It remains to prove
\eqref{eq:continuous-model-block-off-diagonal}.  If
$|i-\ell|\leq1$, the triangle inequality and
\eqref{eq:continuous-model-global-maximal} give
\[
  \Omega_i(g_\ell)
  \lesssim_\eta
  \|g_\ell\|_{L_2(L_\infty(\R)\overline{\otimes}\Mcal)}.
\]

Suppose next that $\ell\leq i-2$.  Put
\[
  R:=\frac{N_i}{N_{\ell+1}}
\]
and write
\[
  t=N_ir,
  \qquad r\geq1.
\]
The lacunarity of $(N_i)$ gives
\[
  R\geq2^{i-\ell-1}.
\]
Choose $\rho\in C_c^\infty(\R)$ such that
\[
  \supp\rho
  \subset
  \left\{
    u\in\R:
    \frac12\leq|u|\leq2
  \right\},
  \qquad
  \sum_{j\in\Z}\rho(2^{-j}u)=1
  \quad(u\neq0),
\]
and choose $\psi_+\in C^\infty(\R)$ satisfying
\[
  \psi_+(u)=0\quad(|u|\leq1/2),
  \qquad
  \psi_+(u)=1\quad(|u|\geq1).
\]
For $j\geq-1$, define
\[
  m_{R,j}(u)
  :=
  \widehat k(Ru)\psi_+(u)\rho(2^{-j}u),
  \qquad
  \widetilde m_{R,j}(u)
  :=
  m_{R,j}(2^ju).
\]
Since
\[
  \supp\widehat{g_\ell}
  \subset
  \left\{
    |\xi|>N_{\ell+1}^{-1}
  \right\},
\]
we have, on $\supp\widehat{g_\ell}$,
\[
  \widehat k(t\xi)\eta_0(D\xi)
  =
  \sum_{j\geq-1}
  \widetilde m_{R,j}
  \left(
    2^{-j}rN_{\ell+1}\xi
  \right)
  \eta_0(D\xi).
\]
Moreover, \eqref{eq:model-multiplier-elementary-estimates} gives
\[
  \|\widetilde m_{R,j}\|_{L_\infty(\R)}
  \lesssim
  (R2^j)^{-1},
  \qquad
  \sup_{u\neq0}
  |u\widetilde m_{R,j}'(u)|
  \lesssim1.
\]
Applying \Cref{lem:continuous-annular-maximal-multiplier} and summing
over $j\geq-1$, we obtain
\[
  \left\|
    \bigl(S_{t,D}g_\ell\bigr)_{t\geq N_i}
  \right\|_{
    L_2\bigl(
      L_\infty(\R)\overline{\otimes}\Mcal;
      \ell_\infty
    \bigr)
  }
  \lesssim_\eta
  R^{-1/2}
  \|g_\ell\|_{L_2(L_\infty(\R)\overline{\otimes}\Mcal)}.
\]
Consequently,
\[
  \Omega_i(g_\ell)
  \lesssim_\eta
  2^{-(i-\ell-1)/2}
  \|g_\ell\|_{L_2(L_\infty(\R)\overline{\otimes}\Mcal)}.
\]

Finally, suppose that $\ell\geq i+2$.  Put
\[
  R:=\frac{N_{i+1}}{N_\ell}
\]
and write
\[
  t=N_{i+1}r,
  \qquad 0<r\leq1.
\]
The lacunarity of $(N_i)$ gives
\[
  R\leq2^{-(\ell-i-1)}.
\]
Choose $\psi_-\in C^\infty(\R)$ satisfying
\[
  \psi_-(u)=1\quad(|u|\leq1),
  \qquad
  \psi_-(u)=0\quad(|u|\geq2).
\]
For $j\leq1$, define
\[
  n_{R,j}(u)
  :=
  \bigl(\widehat k(Ru)-1\bigr)
  \psi_-(u)\rho(2^{-j}u),
  \qquad
  \widetilde n_{R,j}(u)
  :=
  n_{R,j}(2^ju).
\]
Since
\[
  \supp\widehat{g_\ell}
  \subset
  \left\{
    |\xi|\leq N_\ell^{-1}
  \right\},
\]
we have, on $\supp\widehat{g_\ell}$,
\[
  \bigl(\widehat k(t\xi)-1\bigr)\eta_0(D\xi)
  =
  \sum_{j\leq1}
  \widetilde n_{R,j}
  \left(
    2^{-j}rN_\ell\xi
  \right)
  \eta_0(D\xi).
\]
By \eqref{eq:model-multiplier-elementary-estimates},
\[
  \|\widetilde n_{R,j}\|_{L_\infty(\R)}
  +
  \sup_{u\neq0}|u\widetilde n_{R,j}'(u)|
  \lesssim
  R2^j.
\]
It follows from
\Cref{lem:continuous-annular-maximal-multiplier} that
\[
  \left\|
    \bigl(
      (S_{t,D}-I)g_\ell
    \bigr)_{0<t\leq N_{i+1}}
  \right\|_{
    L_2\bigl(
      L_\infty(\R)\overline{\otimes}\Mcal;
      \ell_\infty
    \bigr)
  }
  \lesssim_\eta
  R
  \|g_\ell\|_{L_2(L_\infty(\R)\overline{\otimes}\Mcal)}.
\]
Here we used the fact that $\eta_0(D\xi)=1$ on
$\supp\widehat{g_\ell}$.  Since
\[
  S_{t,D}g_\ell-S_{N_i,D}g_\ell
  =
  (S_{t,D}-I)g_\ell
  -
  (S_{N_i,D}-I)g_\ell,
\]
we obtain
\[
  \Omega_i(g_\ell)
  \lesssim_\eta
  2^{-(\ell-i-1)/2}
  \|g_\ell\|_{L_2(L_\infty(\R)\overline{\otimes}\Mcal)}.
\]
This proves \eqref{eq:continuous-model-block-off-diagonal} and
completes the proof.
\end{proof}

We now pass to Fourier multipliers on $\Z$. Recalling the discrete Fourier multiplier associated with a symbol supported in $(-\frac12,\frac12)$ that has been introduced in Subsection \ref{subsec:approximate-kernel-maximal}, we have 
$(S_{\theta_0,t,D})_{\mathrm{dis}}$ on $L_2(\Acal)$ with symbol 
$\bigl(
    \widehat k(t\,\cdot)\eta(D\,\cdot)
  \bigr)_{\mathrm{per}}(\cdot-\theta_0)$
for $\theta_0\in\T$, $t>0$, and $D\geq1$.

The corresponding discrete estimate is now an immediate consequence of the sampling
principle.

\begin{corollary}\label{lem:periodic-continuous-model-oscillation}
Let $\theta_0\in\T$ and $D\geq1$ and
\[
  2\leq N_0<N_1<\cdots<N_{J+1},
  \qquad
  N_{i+1}>2N_i.
\]
Then
\begin{equation}\label{periodic-continuous-model-oscillation}
  \sum_{i=0}^{J}
  \left\|
    \bigl(
      (S_{\theta_0,t,D})_{\mathrm{dis}}f
      -
      (S_{\theta_0,N_i,D})_{\mathrm{dis}}f
    \bigr)_{N_i\leq t<N_{i+1}}
  \right\|_{L_2(\Acal;\ell_\infty)}^2
  \lesssim_\eta
  \|f\|_{L_2(\Acal)}^2,
\end{equation}
uniformly in $\theta_0$, $D$, $J$, and $(N_i)$.
\end{corollary}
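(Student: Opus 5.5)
The plan is to deduce \eqref{periodic-continuous-model-oscillation} from \Cref{lem:continuous-model-oscillation} by the sampling principle \Cref{lem:sampling}, used with $q=1$, $p=2$, $r=2$.

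\textbf{Step 1: reduce to $\theta_0=0$.} Modulation by $\mathrm e^{2\pi i j\theta_0}$ is a trace-preserving isometry of $L_2(\Acal)$. It conjugates the multiplier with symbol $\bigl(\widehat k(t\,\cdot)\eta(D\,\cdot)\bigr)_{\mathrm{per}}(\cdot-\theta_0)$ into the one with symbol $\bigl(\widehat k(t\,\cdot)\eta(D\,\cdot)\bigr)_{\mathrm{per}}$. For a family $(x_t)$, multiplying every $x_t$ by the same unitary function $u(j)$ in $\ell_\infty(\Z)\subset\Acal$ does not change the $L_2(\Acal;\ell_\infty)$ norm: one simply absorbs $u$ into the left factor $a$ of the factorization. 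So we may take $\theta_0=0$. The continuous lemma is already stated uniformly in $\theta_0$, so alternatively one can apply it directly at $\theta_0=0$.

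\textbf{Step 2: set up the sampling lemma.} Take $I=\{0,\dots,J\}$ and $\Lambda_i=\{t: N_i\le t<N_{i+1}\}$. Define
\[
\phi_{i,t}(\xi):=\bigl(\widehat k(t\xi)-\widehat k(N_i\xi)\bigr)\eta(D\xi).
\]
Since $D\ge1$ and $\supp\eta\subset(-1/2,1/2)$, each $\phi_{i,t}$ is supported in $(-1/2,1/2)$.

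These functions are smooth, but they need not be compactly supported $C_c^\infty$ in a strict sense beyond this; since $\eta$ is compactly supported and $\widehat k$ is entire, each $\phi_{i,t}$ is in fact $C_c^\infty$, so the hypothesis of \Cref{lem:sampling} is met. One then checks that $T_{\phi_{i,t}}f=S_{0,t,D}f-S_{0,N_i,D}f$, and that the discrete multiplier $(T_{\phi_{i,t}})_{\mathrm{dis}}$ has symbol equal to the $1$-periodization of $\phi_{i,t}$. By linearity of periodization this is exactly $(S_{0,t,D})_{\mathrm{dis}}-(S_{0,N_i,D})_{\mathrm{dis}}$.

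\textbf{Step 3: verify the hypothesis and conclude.} \Cref{lem:continuous-model-oscillation} holds for every semifinite $\Mcal$ with a constant depending only on $\eta$. This is precisely the $\ell_2$-summed hypothesis of \Cref{lem:sampling}, with $C\lesssim_\eta 1$ and $r=2$, after taking square roots. The lemma then gives the discrete $\ell_2$-sum bound with constant $C_2C$, independent of $D$, $J$, $(N_i)$ and $\theta_0$. Squaring yields \eqref{periodic-continuous-model-oscillation}.

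\textbf{Main obstacle.} The only delicate point is that the parameter sets $\Lambda_i$ are uncountable intervals in $t$, whereas the sampling principle is stated for general parameter sets. If the proof of \Cref{lem:sampling} needs finite or countable index sets, I would first restrict to finite subsets of each $\Lambda_i$. The constants are uniform, and by \eqref{eq:finite-subset-norm-identity} the $L_2(\Acal;\ell_\infty)$ norm over $\Lambda_i$ is the supremum over finite subsets, so taking the supremum recovers the full estimate. Monotone convergence handles the outer sum, since $I$ is finite.
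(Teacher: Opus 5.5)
Your proposal is correct and follows the paper's proof: apply \Cref{lem:sampling} with $q=1$, $p=r=2$, $I=\{0,\dots,J\}$, $\Lambda_i=[N_i,N_{i+1})$ and $\phi_{i,t}(\xi)=\bigl(\widehat k(t\xi)-\widehat k(N_i\xi)\bigr)\eta(D\xi)$, with the hypothesis supplied by \Cref{lem:continuous-model-oscillation}. Your explicit modulation step reducing to $\theta_0=0$ is a useful addition. The paper leaves this reduction implicit when it asserts uniformity in $\theta_0$, since the sampling lemma only covers symbols supported in $(-1/2,1/2)$.
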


\begin{proof}
For $0\leq i\leq J$ and $N_i\leq t<N_{i+1}$, set
\[
  \phi_{i,t}(\xi)
  :=
  \bigl(
    \widehat k(t\xi)
    -
    \widehat k(N_i\xi)
  \bigr)
  \eta(D\xi).
\]
These functions are supported in $(-1/2,1/2)$.  By
\Cref{lem:continuous-model-oscillation}, the hypothesis of
\Cref{lem:sampling} holds with
\[
  p=r=2,
  \qquad
  I=\{0,\ldots,J\},
  \qquad
  \Lambda_i=[N_i,N_{i+1}).
\]
Applying \Cref{lem:sampling} with $q=1$ gives the desired estimate \eqref{periodic-continuous-model-oscillation} uniformly in $\theta_0$, $D$, $J$, and $(N_i)$.
\end{proof}

We now prove
\Cref{prop:nc-prime-lacunary-oscillation}.
\begin{proof}[Proof of
\Cref{prop:nc-prime-lacunary-oscillation}]
Let $L_N$ be the approximate prime kernel constructed in
\Cref{sec:weighted-prime-maximal-proof}, so that on the geometric grid
\begin{equation}\label{eq:grid-approximation-error}
  \|\widehat K_N-\widehat L_N\|_{L_\infty(\T)}
  \lesssim_{A,\varepsilon}
  r^{-A},
  \qquad
  N=\lfloor(1+\varepsilon)^r\rfloor.
\end{equation}
For $N\geq2$, define
\begin{equation*}
\begin{aligned}
  \mathcal E_Nf
  &:=
  f*(K_N-L_N),\\
  \mathcal R_N^{(s_0)}f
  &:=
  f*(L_N-M_N^{(s_0)}),\\
  \mathcal M_N^{(s_0)}f
  &:=
  f*M_N^{(s_0)},
\end{aligned}
\end{equation*}
where
\[
  M_N^{(s_0)}
  :=
  L_N^{(0)}
  +
  \sum_{1\leq s<s_0}\Tcal_{s,N}.
\]
Thus,
\begin{equation}\label{eq:BN-three-part-decomposition}
  \mathcal B_Nf
  =
  \mathcal E_Nf
  +
  \mathcal R_N^{(s_0)}f
  +
  \mathcal M_N^{(s_0)}f.
\end{equation}

For $0\leq i\leq J$, set
\[
\begin{aligned}
  e_i
  &:=
  \left\|
    \bigl(
      \mathcal E_Nf-\mathcal E_{N_i}f
    \bigr)_{N\in\mathbb D_{\varepsilon,i}}
  \right\|_{L_2(\Acal;\ell_\infty)},\\
  r_i
  &:=
  \left\|
    \bigl(
      \mathcal R_N^{(s_0)}f
      -
      \mathcal R_{N_i}^{(s_0)}f
    \bigr)_{N\in\mathbb D_{\varepsilon,i}}
  \right\|_{L_2(\Acal;\ell_\infty)},\\
  m_i
  &:=
  \left\|
    \bigl(
      \mathcal M_N^{(s_0)}f
      -
      \mathcal M_{N_i}^{(s_0)}f
    \bigr)_{N\in\mathbb D_{\varepsilon,i}}
  \right\|_{L_2(\Acal;\ell_\infty)}.
\end{aligned}
\]
By \eqref{eq:BN-three-part-decomposition} and the triangle inequality,
\begin{equation}\label{eq:outer-Minkowski-revised}
\begin{aligned}
  &\left(
    \sum_{i=0}^{J}
    \left\|
      \bigl(
        \mathcal B_Nf-\mathcal B_{N_i}f
      \bigr)_{N\in\mathbb D_{\varepsilon,i}}
    \right\|_{L_2(\Acal;\ell_\infty)}^2
  \right)^{1/2}\\
  &\qquad\leq
  \left(\sum_{i=0}^{J}e_i^2\right)^{1/2}
  +
  \left(\sum_{i=0}^{J}r_i^2\right)^{1/2}
  +
  \left(\sum_{i=0}^{J}m_i^2\right)^{1/2}.
\end{aligned}
\end{equation}

We estimate the three terms separately.

\medskip
\noindent
\emph{Step 1.}
For every $i$,
\[
\begin{aligned}
  e_i
  &\leq
  \left\|
    (\mathcal E_Nf)_{N\in\mathbb D_{\varepsilon,i}}
  \right\|_{L_2(\Acal;\ell_\infty)}
  +
  \|\mathcal E_{N_i}f\|_{L_2(\Acal)}\\
  &\leq
  \sum_{N\in\mathbb D_{\varepsilon,i}}
  \|\mathcal E_Nf\|_{L_2(\Acal)}
  +
  \|\mathcal E_{N_i}f\|_{L_2(\Acal)}.
\end{aligned}
\]
Hence
\[
\begin{aligned}
  \left(\sum_{i=0}^{J}e_i^2\right)^{1/2}
  &\leq
  \sum_{N\in\mathbb D_\varepsilon}
  \|\mathcal E_Nf\|_{L_2(\Acal)}
  +
  \sum_{i=0}^{J}
  \|\mathcal E_{N_i}f\|_{L_2(\Acal)}.
\end{aligned}
\]
By Plancherel's theorem and
\eqref{eq:grid-approximation-error},
\[
  \|\mathcal E_Nf\|_{L_2(\Acal)}
  \leq
  C_A(\log N)^{-A}\|f\|_{L_2(\Acal)}.
\]
Choosing $A>1$ and using the geometric growth of
$\mathbb D_\varepsilon$, we obtain
\begin{equation}\label{eq:error-square-sum-final-revised}
  \left(\sum_{i=0}^{J}e_i^2\right)^{1/2}
  \lesssim_{A,\varepsilon}
  \|f\|_{L_2(\Acal)}.
\end{equation}

\medskip
\noindent
\emph{Step 2.}
Since
\[
  \mathcal R_N^{(s_0)}f
  =
  \sum_{s\geq s_0}\Tcal_{s,N}f,
\]
\Cref{prop:L2-major-arc-gain} gives
\[
\begin{aligned}
  r_i
  &\leq
  \left\|
    (\mathcal R_N^{(s_0)}f)_{N\in\mathbb D_{\varepsilon,i}}
  \right\|_{L_2(\Acal;\ell_\infty)}
  +
  \|\mathcal R_{N_i}^{(s_0)}f\|_{L_2(\Acal)}\\
  &\lesssim
  2^{-\delta s_0}\|f\|_{L_2(\Acal)}.
\end{aligned}
\]
Therefore,
\begin{equation}\label{eq:tail-square-sum-final-revised}
  \left(\sum_{i=0}^{J}r_i^2\right)^{1/2}
  \lesssim
  (J+1)^{1/2}2^{-\delta s_0}
  \|f\|_{L_2(\Acal)}.
\end{equation}

\medskip
\noindent
\emph{Step 3.}
Recall that
\begin{equation}\label{eq:finite-major-arc-model-explicit}
\begin{aligned}
  M_N^{(s_0)}
  &=
  L_N^{(0)}
  +
  \sum_{1\leq s<s_0}
  \sum_{Q_s/2\leq q<Q_s}
  \frac{\mu(q)}{\vphi(q)}
  \sum_{\substack{1\leq a\leq q\\(a,q)=1}}
  \Rcal_{a,q,s,N},
\end{aligned}
\end{equation}
where
\[
  \widehat{\Rcal_{a,q,s,N}f}(\xi)
  =
  \nu_N\left(\xi-\frac{a}{q}\right)
  \eta\left(
    D_s\left(\xi-\frac{a}{q}\right)
  \right)
  \widehat f(\xi).
\]
For $0\leq i\leq J$, set
\[
  \Omega_{i,a,q,s}(f)
  :=
  \left\|
    \bigl(
      \Rcal_{a,q,s,N}f
      -
      \Rcal_{a,q,s,N_i}f
    \bigr)_{N\in\mathbb D_{\varepsilon,i}}
  \right\|_{L_2(\Acal;\ell_\infty)}.
\]
We claim that
\begin{equation}\label{eq:q-block-oscillation-estimate}
  \sum_{i=0}^{J}
  \Omega_{i,a,q,s}(f)^2
  \lesssim_\varepsilon
  \|f\|_{L_2(\Acal)}^2,
\end{equation}
uniformly in $a$, $q$, and $s$. The corresponding estimate for the
low-frequency term is
\begin{equation}\label{eq:low-frequency-block-oscillation}
  \sum_{i=0}^{J}
  \left\|
    \bigl(
      L_N^{(0)}f-L_{N_i}^{(0)}f
    \bigr)_{N\in\mathbb D_{\varepsilon,i}}
  \right\|_{L_2(\Acal;\ell_\infty)}^2
  \lesssim_\varepsilon
  \|f\|_{L_2(\Acal)}^2.
\end{equation}
Assuming these estimates, Minkowski's inequality and
\eqref{eq:finite-major-arc-model-explicit} give
\begin{equation}\label{eq:finite-model-oscillation-explicit}
\begin{aligned}
  \left(\sum_{i=0}^{J}m_i^2\right)^{1/2}
  &\leq
  \left(
    \sum_{i=0}^{J}
    \left\|
      \bigl(
        L_N^{(0)}f-L_{N_i}^{(0)}f
      \bigr)_{N\in\mathbb D_{\varepsilon,i}}
    \right\|_{L_2(\Acal;\ell_\infty)}^2
  \right)^{1/2}\\
  &\quad+
  \sum_{1\leq s<s_0}
  \sum_{Q_s/2\leq q<Q_s}
  \frac{|\mu(q)|}{\vphi(q)}
  \sum_{\substack{1\leq a\leq q\\(a,q)=1}}
  \left(
    \sum_{i=0}^{J}
    \Omega_{i,a,q,s}(f)^2
  \right)^{1/2}\\
  &\lesssim_\varepsilon
  4^{s_0}\|f\|_{L_2(\Acal)}.
\end{aligned}
\end{equation}

It remains to prove
\eqref{eq:q-block-oscillation-estimate} and
\eqref{eq:low-frequency-block-oscillation}. Define
\[
  \mathcal D_{a,q,s,N}
  :=
  \Rcal_{a,q,s,N}
  -
  \widetilde{\Rcal}_{a,q,s,N},
\]
where
\[
  \widehat{\widetilde{\Rcal}_{a,q,s,N}f}(\xi)
  :=
  \widehat k\left(
    N\left(\xi-\frac aq\right)
  \right)
  \eta\left(
    D_s\left(\xi-\frac aq\right)
  \right)
  \widehat f(\xi).
\]
For $|\theta|\leq1/2$, the identity
\[
  \nu_N(\theta)
  =
  \widehat k(N\theta)
  e^{-2\pi i\theta}
  \frac{2\pi i\theta}{1-e^{-2\pi i\theta}}
\]
implies
\begin{equation*}
  \left|
    \nu_N(\theta)-\widehat k(N\theta)
  \right|
  \lesssim
  \min\left\{
    |\theta|,\frac1N
  \right\}.
\end{equation*}
Since $\eta(D_s\theta)$ is supported where
$|\theta|\leq(2D_s)^{-1}$, Plancherel's theorem gives
\begin{equation}\label{eq:discrete-continuous-operator-error}
  \|\mathcal D_{a,q,s,N}f\|_{L_2(\Acal)}
  \lesssim_\eta
  \min\left\{
    \frac1{D_s},\frac1N
  \right\}
  \|f\|_{L_2(\Acal)}.
\end{equation}
For $0\leq i\leq J$, define
\[
\begin{aligned}
  \widetilde\Omega_{i,a,q,s}(f)
  &:=
  \left\|
    \bigl(
      \widetilde{\Rcal}_{a,q,s,N}f
      -
      \widetilde{\Rcal}_{a,q,s,N_i}f
    \bigr)_{N\in\mathbb D_{\varepsilon,i}}
  \right\|_{L_2(\Acal;\ell_\infty)},\\
  d_{i,a,q,s}(f)
  &:=
  \left\|
    \bigl(
      \mathcal D_{a,q,s,N}f
      -
      \mathcal D_{a,q,s,N_i}f
    \bigr)_{N\in\mathbb D_{\varepsilon,i}}
  \right\|_{L_2(\Acal;\ell_\infty)}.
\end{aligned}
\]
By the triangle inequality,
\[
  \Omega_{i,a,q,s}(f)
  \leq
  \widetilde\Omega_{i,a,q,s}(f)
  +
  d_{i,a,q,s}(f).
\]
Moreover,
\[
\begin{aligned}
  d_{i,a,q,s}(f)
  &\leq
  \sum_{N\in\mathbb D_{\varepsilon,i}}
  \|\mathcal D_{a,q,s,N}f\|_{L_2(\Acal)}
  +
  \|\mathcal D_{a,q,s,N_i}f\|_{L_2(\Acal)}.
\end{aligned}
\]
Therefore, by
\eqref{eq:discrete-continuous-operator-error},
\[
\begin{aligned}
  \left(
    \sum_{i=0}^{J}
    d_{i,a,q,s}(f)^2
  \right)^{1/2}
  &\leq
  \sum_{N\in\mathbb D_\varepsilon}
  \|\mathcal D_{a,q,s,N}f\|_{L_2(\Acal)}
  +
  \sum_{i=0}^{J}
  \|\mathcal D_{a,q,s,N_i}f\|_{L_2(\Acal)}\\
  &\lesssim_\varepsilon
  \frac{1+\log(2D_s)}{D_s}
  \|f\|_{L_2(\Acal)}\\
  &\lesssim_\varepsilon
  \|f\|_{L_2(\Acal)}.
\end{aligned}
\]
Here we used the geometric growth of
$\mathbb D_\varepsilon$, the lacunarity of $(N_i)$, and the fact that
$D_s\geq1$.

On the other hand, note that $\widetilde{\Rcal}_{a,q,s,N}=(S_{a/q,N,D_s})_{\mathrm{dis}}$, and then
\Cref{lem:periodic-continuous-model-oscillation}, applied with
\[
  \theta_0=\frac aq,
  \qquad
  D=D_s,
\]
gives
\[
  \sum_{i=0}^{J}
  \widetilde\Omega_{i,a,q,s}(f)^2
  \lesssim_\eta
  \|f\|_{L_2(\Acal)}^2.
\]
Minkowski's inequality now yields
\[
\begin{aligned}
  \left(
    \sum_{i=0}^{J}
    \Omega_{i,a,q,s}(f)^2
  \right)^{1/2}
  &\leq
  \left(
    \sum_{i=0}^{J}
    \widetilde\Omega_{i,a,q,s}(f)^2
  \right)^{1/2}
  +
  \left(
    \sum_{i=0}^{J}
    d_{i,a,q,s}(f)^2
  \right)^{1/2}\\
  &\lesssim_\varepsilon
  \|f\|_{L_2(\Acal)}.
\end{aligned}
\]
This proves \eqref{eq:q-block-oscillation-estimate}. The same
argument, with $\theta_0=0$ and $D=1$, proves
\eqref{eq:low-frequency-block-oscillation}, and hence completes the
proof of \eqref{eq:finite-model-oscillation-explicit}.

\medskip
\noindent
\emph{Step 4.}
Substituting
\eqref{eq:error-square-sum-final-revised},
\eqref{eq:tail-square-sum-final-revised}, and
\eqref{eq:finite-model-oscillation-explicit} into
\eqref{eq:outer-Minkowski-revised}, we obtain
\begin{equation}\label{eq:bourgain-combined-oscillation-revised}
\begin{aligned}
  &\left(
    \sum_{i=0}^{J}
    \left\|
      \bigl(
        \mathcal B_Nf-\mathcal B_{N_i}f
      \bigr)_{N\in\mathbb D_{\varepsilon,i}}
    \right\|_{L_2(\Acal;\ell_\infty)}^2
  \right)^{1/2}\\
  &\qquad\lesssim_\varepsilon
  \left(
    1
    +4^{s_0}
    +(J+1)^{1/2}2^{-\delta s_0}
  \right)
  \|f\|_{L_2(\Acal)}.
\end{aligned}
\end{equation}
Choose
\begin{equation}\label{eq:s0-choice-revised}
  s_0
  :=
  \max\left\{
    1,
    \left\lfloor
      \kappa\log_2(J+2)
    \right\rfloor
  \right\},
  \qquad
  0<\kappa<\frac14.
\end{equation}
After squaring
\eqref{eq:bourgain-combined-oscillation-revised} and dividing by
$J+1$, we may take
\begin{equation*}
  c_J
  \lesssim_\varepsilon
  \frac1{J+1}
  +
  \frac{16^{s_0}}{J+1}
  +
  2^{-2\delta s_0}.
\end{equation*}
By \eqref{eq:s0-choice-revised},
\[
  c_J
  \lesssim_\varepsilon
  \frac1{J+1}
  +(J+2)^{4\kappa-1}
  +(J+2)^{-2\delta\kappa},
\]
which tends to zero because $\kappa<1/4$ and $\delta>0$. This proves
\Cref{prop:nc-prime-lacunary-oscillation}. 
\end{proof}
\begin{remark}\label{rem:chw-pointwise-convergence}
\normalfont
The argument developed in this section also applies to the polynomial
averages
\[
  A_N^{(d)}(x)
  :=
  \frac1N\sum_{k=1}^N\gamma^{k^d}(x)
\]
studied in \cite{ChenHongWang+arXiv2024}. Indeed, the required continuous oscillation estimate follows by
repeating the proof of \Cref{lem:continuous-model-oscillation} with
$k$ replaced by
\[
  k_d(u)
  :=
  \frac1d u^{1/d-1}\1_{[0,1]}(u).
\]
This is possible because
\[
  |\widehat{k_d}(u)|
  \lesssim_d |u|^{-1/d},
  \qquad
  |u\widehat{k_d}'(u)|
  \lesssim_d 1
\]
for large $|u|$, while
\[
  |\widehat{k_d}(u)-1|
  +
  |u\widehat{k_d}'(u)|
  \lesssim_d |u|
\]
near the origin.
With this estimate, we can follow the proof of
\cite[Lemma~4.4]{ChenHongWang+arXiv2024} with only minor
modifications. Choose the parameters $\rho$ and $\rho'$ appearing in
\cite[Lemma~2.4]{ChenHongWang+arXiv2024} so that
$1<\rho'<\rho<2$. Then the approximation estimate in that lemma,
together with \cite[(4.16)--(4.18)]{ChenHongWang+arXiv2024}, gives the
analogue of \Cref{prop:ergodic-grid-oscillation}.

Finally, if
$N'\leq N<(1+\varepsilon)N'+1$, then
\[
  \|A_N^{(d)}(x)-A_{N'}^{(d)}(x)\|_\infty
  \lesssim
  \varepsilon\|x\|_\infty.
\]
Combining this estimate with the maximal inequality in
\cite[Theorem~1.1]{ChenHongWang+arXiv2024} and repeating the
$L_2(\Ncal;c_0)$ argument above gives b.a.u. convergence for
\[
  p>\frac{1+\sqrt5}{2}.
\]
Thus this also solves the pointwise convergence problem left open in
\cite{ChenHongWang+arXiv2024}.
\end{remark}

\section{From weighted to unweighted prime averages}
\label{sec:proof-main-theorem}

We now complete the proof of \Cref{thm:prime-bau-convergence}. The
maximal inequality and b.a.u. convergence for the logarithmically
weighted averages $(B_N)$ have been established in
\Cref{thm:weighted-prime-maximal-Lp}. It remains to pass from $(B_N)$
to the original unweighted averages $(A_N)$. The argument follows the
same general idea as in the classical setting; see
\cite{Wierdl}, \cite[Section~5]{MTZK}, and
\cite[Appendix~A]{EisnerLin}. For completeness, we give the details of
the argument in the noncommutative setting.

\subsection{The unweighted maximal inequality}
\label{subsec:unweighted-maximal}
We first deduce the maximal inequality for the unweighted prime averages from the logarithmically weighted maximal inequality in \Cref{thm:weighted-prime-maximal-Lp}. 
Let $x\in L_r(\Ncal)_+$.
Recall that
\[
  A_N(x)
  =
  \frac{1}{|P_N|}\sum_{p\le N}\gamma^p(x)
\]
and
\[
  B_N(x)
  =
  \frac{1}{N}\sum_{p\le N}(\log p)\gamma^p(x),
  \qquad N\ge2.
\]
For $n\ge2$, set
\begin{equation*}
  T_n(x)
  :=
  \sum_{p\le n}(\log p)\gamma^p(x)
  =
  nB_n(x).
\end{equation*}
Applying the discrete Abel summation formula to
\[
  a_m
  :=
  \1_{\{m\ \mathrm{prime}\}}(\log m)\gamma^m(x)
  \qquad\text{and}\qquad
  b_m
  :=
  \frac{1}{\log m},
  \qquad m\ge2,
\]
we obtain
\begin{equation}\label{eq:abel-unweighted-from-weighted}
\begin{aligned}
  \sum_{p\le N}\gamma^p(x)
  &=
  \sum_{m=2}^N a_mb_m\\
  &=
  \frac{T_N(x)}{\log N}
  +
  \sum_{n=2}^{N-1}
  T_n(x)
  \left(
    \frac{1}{\log n}
    -
    \frac{1}{\log(n+1)}
  \right)\\
  &=
  \frac{N}{\log N}B_N(x)
  +
  \sum_{n=2}^{N-1}
  n
  \left(
    \frac{1}{\log n}
    -
    \frac{1}{\log(n+1)}
  \right)
  B_n(x).
\end{aligned}
\end{equation}

Dividing \eqref{eq:abel-unweighted-from-weighted} by $\pi(N)=|P_N|$ gives
\begin{equation}\label{eq:AN-positive-combination-Bn}
  A_N(x)
  =
  \alpha_{N,N}B_N(x)
  +
  \sum_{n=2}^{N-1}\alpha_{N,n}B_n(x),
\end{equation}
where
\begin{equation*}
  \alpha_{N,N}
  :=
  \frac{N}{\pi(N)\log N}
\end{equation*}
and, for $2\le n<N$,
\begin{equation*}
  \alpha_{N,n}
  :=
  \frac{n}{\pi(N)}
  \left(
    \frac{1}{\log n}
    -
    \frac{1}{\log(n+1)}
  \right).
\end{equation*}
Since $t\mapsto1/\log t$ is decreasing on $(1,\infty)$, all the
coefficients in \eqref{eq:AN-positive-combination-Bn} are nonnegative.

We next show that their sum is bounded uniformly in $N$. A direct calculation gives
\begin{equation}\label{eq:abel-coefficient-sum-telescoping}
\begin{aligned}
  \frac{N}{\log N}
  +
  \sum_{n=2}^{N-1}
  n\left(
    \frac{1}{\log n}
    -
    \frac{1}{\log(n+1)}
  \right)=
  \frac{2}{\log2}
  +
  \sum_{n=3}^N\frac{1}{\log n}.
\end{aligned}
\end{equation}
Combining \eqref{eq:abel-coefficient-sum-telescoping} with the first
estimate in \eqref{eq:reciprocal-log-estimates} and the lower Chebyshev bound
in \eqref{eq:chebyshev-pi-bounds}, we obtain
\begin{equation}\label{eq:abel-coefficients-uniform-sum}
  \alpha_{N,N}
  +
  \sum_{n=2}^{N-1}\alpha_{N,n}
  \le
  C,
  \qquad N\ge2.
\end{equation}

Since $x\ge0$, $B_n(x)\ge0$ for every
$n\ge2$. By \Cref{thm:weighted-prime-maximal-Lp},
we have
\begin{equation*}
  \bigl\|(B_n(x))_{n\ge2}\bigr\|_{
    L_r(\Ncal;\ell_\infty)}
  \le
  C_r\norm{x}_{L_r(\Ncal)}.
\end{equation*}
Let $\varepsilon>0$. By the positive-majorant characterization of
$L_r(\Ncal;\ell_\infty)$, there exists
$y_\varepsilon\in L_r(\Ncal)_+$ such that
\begin{equation}\label{eq:BN-positive-majorant}
  B_n(x)\le y_\varepsilon,
  \qquad n\ge2,
\end{equation}
and
\begin{equation*}
  \norm{y_\varepsilon}_{L_r(\Ncal)}
  \le
  \bigl\|(B_n(x))_{n\ge2}\bigr\|_{
    L_r(\Ncal;\ell_\infty)}
  +\varepsilon.
\end{equation*}
Using the positivity of the coefficients in
\eqref{eq:AN-positive-combination-Bn}, together with
\eqref{eq:abel-coefficients-uniform-sum} and
\eqref{eq:BN-positive-majorant}, we obtain
\begin{equation*}
  0\le A_N(x)
  \le
  \left(
    \alpha_{N,N}
    +
    \sum_{n=2}^{N-1}\alpha_{N,n}
  \right)y_\varepsilon
  \le
  Cy_\varepsilon,
  \qquad N\ge2.
\end{equation*}
Therefore,
\begin{equation*}
\begin{aligned}
  \bigl\|(A_N(x))_{N\ge2}\bigr\|_{
    L_r(\Ncal;\ell_\infty)}
  &\le
  C_r\norm{y_\varepsilon}_{L_r(\Ncal)}\\
  &\le
  C_r\bigl\|(B_n(x))_{n\ge2}\bigr\|_{
    L_r(\Ncal;\ell_\infty)}
  +C_r\varepsilon\\
  &\le
  C_r\norm{x}_{L_r(\Ncal)}+C_r\varepsilon.
\end{aligned}
\end{equation*}
Letting $\varepsilon\to0$ proves
\begin{equation}\label{eq:AN-maximal-positive-final}
  \bigl\|(A_N(x))_{N\ge2}\bigr\|_{
    L_r(\Ncal;\ell_\infty)}
  \le
  C_r\norm{x}_{L_r(\Ncal)}
\end{equation}
for every $x\in L_r(\Ncal)_+$.

We finally consider an arbitrary $x\in L_r(\Ncal)$. Write
\begin{equation*}
  x
  =
  (\Re x)_+-(\Re x)_-
  +i(\Im x)_+-i(\Im x)_-.
\end{equation*}
Applying \eqref{eq:AN-maximal-positive-final} to each of these four
positive elements and using the triangle inequality in
$L_r(\Ncal;\ell_\infty)$, we obtain
\begin{equation*}
\begin{aligned}
  \bigl\|(A_N(x))_{N\ge2}\bigr\|_{
    L_r(\Ncal;\ell_\infty)}
  &\le
  C_r\Bigl(
    \norm{(\Re x)_+}_{L_r(\Ncal)}
    +\norm{(\Re x)_-}_{L_r(\Ncal)}\\
  &\hspace{3.7em}
    +\norm{(\Im x)_+}_{L_r(\Ncal)}
    +\norm{(\Im x)_-}_{L_r(\Ncal)}
  \Bigr)\\
  &\le
  C_r\norm{x}_{L_r(\Ncal)}.
\end{aligned}
\end{equation*}

\subsection{Equivalence of the weighted and unweighted limits}
\label{sec:proof-weighted-unweighted-equivalence}

In this subsection, we prove that for $x\in L_r(\mathcal N)$, the unweighted prime averages $(A_N(x))$ and the logarithmically weighted prime averages $(B_N(x))$
have the same bilaterally almost uniform limit. 
More precisely, the convergence part of of
\Cref{thm:prime-bau-convergence} follows from 
\Cref{thm:weighted-prime-maximal-Lp} and the following lemma.

\begin{lemma}
\label{thm:weighted-unweighted-equivalence}
Let $1<p<\infty$ and $x\in L_p(\mathcal{N})$. Then $A_N(x)$ converges b.a.u. to some $y\in L_p(\mathcal{N})$ if and only if $B_N(x)$ converges b.a.u. to the same element $y$.
\end{lemma}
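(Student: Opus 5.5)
Both implications will follow from a b.a.u. Toeplitz-type argument, the \Cref{lem:bau-toeplitz} mentioned in the introduction, applied to an explicit Abel-summation relation between $(A_N)$ and $(B_N)$. For the direction ``$B_N(x)\to y$ implies $A_N(x)\to y$'' I use \eqref{eq:AN-positive-combination-Bn},
\[
  A_N(x)=\sum_{n=2}^N\alpha_{N,n}B_n(x),\qquad \alpha_{N,n}\ge0 .
\]
For the converse I apply Abel summation the other way, with $a_m=\1_{\{m\ \mathrm{prime}\}}\gamma^m(x)$, $b_m=\log m$ and $\sum_{m\le n}a_m=\pi(n)A_n(x)$. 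This gives
\[
  B_N(x)=\frac{\pi(N)\log N}{N}A_N(x)-\frac1N\sum_{n=2}^{N-1}\pi(n)\bigl(\log(n+1)-\log n\bigr)A_n(x)
  =:\sum_{n=2}^N\beta_{N,n}A_n(x).
\]

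\textbf{Step 1: Toeplitz conditions on the coefficients.} For a triangular array $(c_{N,n})$ I need
\[
  \text{(a) } \lim_{N\to\infty} c_{N,n}=0 \text{ for each fixed } n,\qquad
  \text{(b) } \sup_N\sum_n|c_{N,n}|<\infty,\qquad
  \text{(c) } \lim_{N\to\infty}\sum_n c_{N,n}=1 .
\]

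For $\alpha$: condition (a) holds because $\pi(N)\to\infty$. Condition (b) is \eqref{eq:abel-coefficients-uniform-sum}. For (c), the telescoping identity \eqref{eq:abel-coefficient-sum-telescoping} gives
\[
  \sum_n\alpha_{N,n}=\frac{1}{\pi(N)}\Bigl(\frac2{\log2}+\sum_{n=3}^N\frac1{\log n}\Bigr),
\]
which tends to $1$ by \eqref{eq:reciprocal-log-estimates} together with $\pi(N)\sim N/\log N$ from \eqref{eq:PNT-pi-form}.

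For $\beta$: the diagonal coefficient $\beta_{N,N}=\pi(N)\log N/N\to1$ by the prime number theorem. For the off-diagonal coefficients, $\log(n+1)-\log n\le 1/n$ and \eqref{eq:chebyshev-pi-bounds} give
\[
  \sum_{n<N}|\beta_{N,n}|\le \frac1N\sum_{n<N}\frac{\pi(n)}{n}\lesssim \frac1N\sum_{n=2}^{N}\frac1{\log n}\lesssim\frac1{\log N}\to0 .
\]
Hence (a), (b) and (c) hold for $\beta$ as well. Here the coefficients are signed, so the Toeplitz argument must allow signs, which (b) in absolute value handles.

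\textbf{Step 2: the b.a.u. Toeplitz argument.} Suppose $z_n\to y$ b.a.u., with $z_n,y\in L_p(\Ncal)$, and set $w_N=\sum_n c_{N,n}z_n$. Fix $\varepsilon>0$ and choose the projection $e$ as follows.

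1. Take $e_0$ with $\tau(e_0^\perp)<\varepsilon/3$ and $\|e_0(z_n-y)e_0\|_\infty\to0$.
2. Take $e_1=\1_{[0,\lambda]}(|y|)\wedge\1_{[0,\lambda]}(|y^*|)$ with $\lambda$ large so that $\tau(e_1^\perp)<\varepsilon/3$. This is possible by Chebyshev's inequality since $y\in L_p$. Then $\|e_1ye_1\|_\infty\le\lambda$.
3. For each $n$, take $f_n$ analogously for $z_n-y$ with $\tau(f_n^\perp)<\varepsilon 2^{-n}/3$.

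Put $e=e_0\wedge e_1\wedge\bigwedge_n f_n$. Then $\tau(e^\perp)<\varepsilon$, every $e(z_n-y)e$ is bounded, and $\sup_{n}\|e(z_n-y)e\|_\infty<\infty$.

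Now write
\[
  e(w_N-y)e=\sum_{n<n_0}c_{N,n}\,e(z_n-y)e+\sum_{n\ge n_0}c_{N,n}\,e(z_n-y)e+\Bigl(\sum_nc_{N,n}-1\Bigr)eye .
\]
I first choose $n_0$ so that $\sup_{n\ge n_0}\|e(z_n-y)e\|_\infty$ is small; by (b), the middle term is then small uniformly in $N$. For that fixed $n_0$, the first term tends to $0$ by (a), since it is a finite sum of bounded operators. The last term tends to $0$ by (c) and the bound $\|eye\|_\infty\le\lambda$. Hence $w_N\to y$ b.a.u.

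Applying this with $c=\alpha$ and $z_n=B_n(x)$ gives one direction of the lemma. Applying it with $c=\beta$ and $z_n=A_n(x)$ gives the other.

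\textbf{Main obstacle.} The point needing care is the construction of a single projection $e$ in Step 2. It must simultaneously make $eye$ bounded and bound all the finitely many early differences $e(z_n-y)e$, since these are not bounded a priori in the noncommutative setting. The countable intersection with geometrically decreasing traces handles this. The only arithmetic input is the prime number theorem \eqref{eq:PNT-pi-form}, which is needed to get limit exactly $1$ in (c) rather than mere boundedness.
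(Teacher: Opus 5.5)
Your proposal is correct and follows essentially the same route as the paper: you use the same Abel-summation identities with coefficients $\alpha_{N,n}$ and $\beta_{N,n}$ and check them against the hypotheses of the b.a.u.\ Toeplitz lemma, whose proof you reproduce (a single projection built from countably many projections with summable traces, then the three-term splitting). The only cosmetic difference is that you get the row-sum limit for $\beta$ from $\beta_{N,N}\to1$ together with $\sum_{n<N}|\beta_{N,n}|\lesssim 1/\log N$, whereas the paper computes $\sum_n\beta_{N,n}=\vartheta(N)/N$ directly.
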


In the commutative setting, Wierdl proved the equivalence of the weighted
and unweighted limits by a pointwise summation-by-parts argument; see
\cite[Lemma~1]{Wierdl}.  This argument cannot be used directly for b.a.u.
convergence.  We instead use the following b.a.u. version of the classical
Silverman--Toeplitz theorem.  It gives a simple and unified proof in both
directions.

\begin{lemma}\label{lem:bau-toeplitz}
Let $(x_n)_{n\ge1}\subset L_0(\Ncal)$ and
$x\in L_0(\Ncal)$. Suppose that
\[
  x_n\longrightarrow x
  \qquad\text{b.a.u.}
\]
Let $(a_{N,n})_{N,n\ge1}$ be a scalar triangular matrix; that is,
$a_{N,n}=0$ whenever $n>N$. Assume that
\begin{equation}\label{eq:toeplitz-column-condition}
  \lim_{N\to\infty}a_{N,n}=0
  \qquad\text{for every fixed }n,
\end{equation}
\begin{equation}\label{eq:toeplitz-row-sum-condition}
  \lim_{N\to\infty}\sum_{n=1}^N a_{N,n}=1,
\end{equation}
and
\begin{equation}\label{eq:toeplitz-absolute-row-bound}
  M
  :=
  \sup_{N\ge1}\sum_{n=1}^N|a_{N,n}|
  <\infty.
\end{equation}
Then
\begin{equation}\label{eq:toeplitz-bau-conclusion}
  \sum_{n=1}^N a_{N,n}x_n
  \longrightarrow x
  \qquad\text{b.a.u..}
\end{equation}
\end{lemma}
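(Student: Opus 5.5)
We prove \Cref{lem:bau-toeplitz} by mimicking the classical Silverman--Toeplitz argument, working inside a single projection chosen from the definition of b.a.u.\ convergence. Fix $\varepsilon>0$. The hypothesis $x_n\to x$ b.a.u.\ gives a projection $e\in\Ncal$ with $\tau(e^\perp)<\varepsilon$ and $\|e(x_n-x)e\|_\infty\to0$. The conclusion \eqref{eq:toeplitz-bau-conclusion} requires the same kind of projection for the averaged sequence. We will show that this same $e$ works, so no new projection needs to be constructed.

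The first step is a finiteness observation. For each $n$, the operator $e(x_n-x)e$ is bounded, because $\|e(x_n-x)e\|_\infty$ is finite. The definition only provides $\|e(x_n-x)e\|_\infty\to0$ as $n\to\infty$, so individual terms could a priori be infinite. If that happened, we would modify $e$: since each $x_n-x$ is $\tau$-measurable, for every $n$ there is a projection $f_n$ with $\tau(f_n^\perp)<\varepsilon 2^{-n}$ and $(x_n-x)f_n$ bounded. Replacing $e$ by $e\wedge\bigwedge_{n\le n_0}f_n$ handles the finitely many indices $n\le n_0$ that precede the range where $\|e(x_n-x)e\|_\infty\le1$. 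This costs at most $2\varepsilon$ in trace, and convergence is preserved because compressing by a smaller projection does not increase norms. After this modification, $c_n:=\|e(x_n-x)e\|_\infty$ is finite for all $n$ and $c_n\to0$.

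Next we write, for each $N$,
\[
  e\Bigl(\sum_{n=1}^N a_{N,n}x_n-x\Bigr)e
  =\sum_{n=1}^N a_{N,n}\,e(x_n-x)e+\Bigl(\sum_{n=1}^N a_{N,n}-1\Bigr)exe .
\]
For the second term we also need $exe$ to be bounded. This again follows by shrinking $e$ by a projection $f$ with $\tau(f^\perp)<\varepsilon$ and $xf$ bounded (so that $fx^*xf$ is bounded); $exe$ is then bounded whenever $e\le f$. Once $exe$ is bounded, \eqref{eq:toeplitz-row-sum-condition} shows that the second term tends to $0$ in norm. For the first term, given $\delta>0$ we choose $n_1$ with $c_n<\delta$ for $n>n_1$. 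The first term is then bounded in norm by
\[
  \sum_{n\le n_1}|a_{N,n}|c_n+M\delta .
\]
By \eqref{eq:toeplitz-column-condition} the finite sum tends to $0$ as $N\to\infty$, so the $\limsup$ is at most $M\delta$, where $M$ is the constant in \eqref{eq:toeplitz-absolute-row-bound}. Since $\delta$ is arbitrary, the first term also tends to $0$. Since $\varepsilon$ was arbitrary, this proves \eqref{eq:toeplitz-bau-conclusion}.

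The only delicate point is the bookkeeping in the first step. We must make sure that the finitely many early terms $e(x_n-x)e$ and the operator $exe$ are bounded after compression. This uses only the $\tau$-measurability of each operator together with the fact that shrinking the projection preserves the convergence $\|e(x_n-x)e\|_\infty\to0$. Everything else is the scalar Toeplitz argument carried out in operator norm.
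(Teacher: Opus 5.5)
Your proposal is correct and follows essentially the same route as the paper: you shrink the b.a.u.\ projection so that $exe$ and every $e(x_n-x)e$ are bounded, then run the scalar Silverman--Toeplitz estimate in operator norm. The paper intersects with countably many projections $f_n$, whereas you need only finitely many because the tail is already bounded; your opening claim that the original $e$ works unchanged is superseded by the modification you carry out, and the total trace cost is $3\varepsilon$ rather than $\varepsilon$, which is harmless.
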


\begin{proof}
Fix $\varepsilon>0$. Since $x_n\to x$ b.a.u., there exists a
projection $e_0\in\Ncal$ such that
\begin{equation*}
  \tau(e_0^\perp)<\frac{\varepsilon}{2}
  \qquad\text{and}\qquad
  \lim_{n\to\infty}\|e_0(x_n-x)e_0\|_\infty=0.
\end{equation*}
On the other hand, we may choose
projections $f_0,f_1,f_2,\ldots\in\Ncal$ such that
\begin{equation*}
  \tau(f_0^\perp)<\frac{\varepsilon}{8},
  \qquad
  \tau(f_n^\perp)<\frac{\varepsilon}{2^{n+3}}
  \quad(n\ge1),
\end{equation*}
and
\begin{equation*}
  \|f_0xf_0\|_\infty<\infty,
  \qquad
  \|f_n(x_n-x)f_n\|_\infty<\infty
  \quad(n\ge1).
\end{equation*}
Set
\begin{equation*}
  e
  :=
  e_0\wedge\bigwedge_{n=0}^{\infty}f_n.
\end{equation*}
Then,
\begin{equation}\label{eq:toeplitz-common-projection-trace}
\begin{aligned}
  \tau(e^\perp)
  \le
  \tau(e_0^\perp)
  +
  \sum_{n=0}^{\infty}\tau(f_n^\perp)
  <\varepsilon;
\end{aligned}
\end{equation}
moreover
\begin{equation}\label{eq:toeplitz-common-boundedness}
  \|exe\|_\infty<\infty,
  \qquad
  \|e(x_n-x)e\|_\infty<\infty
  \quad(n\ge1),
\end{equation}
and, because $e\le e_0$,
\begin{equation}\label{eq:toeplitz-common-tail-limit}
  \lim_{n\to\infty}\|e(x_n-x)e\|_\infty=0.
\end{equation}

Set
\[
  y_N:=\sum_{n=1}^N a_{N,n}x_n
  \qquad\text{and}\qquad
  r_N:=\sum_{n=1}^N a_{N,n}.
\]
Then
\begin{equation}\label{eq:toeplitz-error-decomposition}
  y_N-x
  =
  \sum_{n=1}^N a_{N,n}(x_n-x)
  +(r_N-1)x.
\end{equation}
Let $\delta>0$. By \eqref{eq:toeplitz-common-tail-limit}, choose $m$
so large that
\begin{equation}\label{eq:toeplitz-small-tail}
  \sup_{n>m}\|e(x_n-x)e\|_\infty
  <
  \frac{\delta}{3M}.
\end{equation}
For this fixed $m$, compressing
\eqref{eq:toeplitz-error-decomposition} by $e$ gives
\begin{equation*}
\begin{aligned}
  \|e(y_N-x)e\|_\infty
  &\le
  \sum_{n=1}^m
  |a_{N,n}|\,
  \|e(x_n-x)e\|_\infty\\
  &\quad+
  \sum_{n=m+1}^N
  |a_{N,n}|\,
  \|e(x_n-x)e\|_\infty\\
  &\quad+
  |r_N-1|\,\|exe\|_\infty.
\end{aligned}
\end{equation*}
By \eqref{eq:toeplitz-column-condition} and
\eqref{eq:toeplitz-common-boundedness}, the first term on the
right-hand side tends to zero as $N\to\infty$, since it is a finite
sum. By \eqref{eq:toeplitz-absolute-row-bound} and
\eqref{eq:toeplitz-small-tail}, the second term is bounded by
$\delta/3$. The third term tends to zero by
\eqref{eq:toeplitz-row-sum-condition} and
\eqref{eq:toeplitz-common-boundedness}. Hence, for all sufficiently
large $N$,
\[
  \|e(y_N-x)e\|_\infty<\delta.
\]
Since $\delta>0$ is arbitrary and the projection $e$ is independent of
$\delta$, we conclude that
\[
  \lim_{N\to\infty}\|e(y_N-x)e\|_\infty=0.
\]
Together with \eqref{eq:toeplitz-common-projection-trace}, this proves
\eqref{eq:toeplitz-bau-conclusion}.
\end{proof}

Now, we are ready to show \Cref{thm:weighted-unweighted-equivalence}.
\begin{proof}[Proof of
\Cref{thm:weighted-unweighted-equivalence}]
We first prove that b.a.u. convergence of $(B_N(x))_{N\ge2}$ implies
b.a.u. convergence of $(A_N(x))_{N\ge2}$ to the same limit. As in showing the maximal inequalities,
we start with the identity \eqref{eq:AN-positive-combination-Bn}, that is,
\begin{equation}\label{eq:AN-as-Toeplitz-transform-of-BN}
  A_N(x)
  =
  \sum_{n=2}^N\alpha_{N,n}B_n(x),
\end{equation}
where
\begin{equation*}
  \alpha_{N,n}
  :=
  \begin{cases}
    \displaystyle
    \frac{n}{\pi(N)}
    \left(
      \frac{1}{\log n}
      -
      \frac{1}{\log(n+1)}
    \right),
      & 2\le n<N,\\[1.2em]
    \displaystyle
    \frac{N}{\pi(N)\log N},
      & n=N,\\[0.8em]
    0,
      & n>N.
  \end{cases}
\end{equation*}
To get the desired assertion, it suffices to verify the hypotheses of \Cref{lem:bau-toeplitz}.

First, \eqref{eq:toeplitz-column-condition} is trivial, and
\eqref{eq:toeplitz-absolute-row-bound} is just \eqref{eq:abel-coefficients-uniform-sum} since $\alpha_{N,n}\geq0$. It remains to justify \eqref{eq:toeplitz-row-sum-condition}.
A direct calculation yields
\begin{equation}\label{eq:alpha-row-sum-exact}
\begin{aligned}
  \sum_{n=2}^N\alpha_{N,n}
  =
  \frac{1}{\pi(N)}
  \left(
    \frac{2}{\log2}
    +
    \sum_{n=3}^N\frac{1}{\log n}
  \right).
\end{aligned}
\end{equation}
Set
\[
  C_N:=\sum_{n=3}^N\frac1{\log n}.
\]
The two precise asymptotics
\[
  \frac{C_N}{N/\log N}\longrightarrow1
  \quad\text{and}\quad
  \frac{\pi(N)}{N/\log N}\longrightarrow1
\]
follow from \eqref{eq:reciprocal-log-estimates} and
\eqref{eq:PNT-pi-form}, respectively.  Hence the leading constants
cancel, while the fixed boundary term in
\eqref{eq:alpha-row-sum-exact} is negligible.  More explicitly,
\begin{equation*}
\begin{aligned}
  \lim_{N\to\infty}\sum_{n=2}^N\alpha_{N,n}
  &=
  \lim_{N\to\infty}
  \left[
    \frac{2}{\pi(N)\log2}
    +
    \frac{C_N}{N/\log N}
    \frac{N/\log N}{\pi(N)}
  \right]\\
  &=0+1\cdot1=1,
\end{aligned}
\end{equation*}
which is \eqref{eq:toeplitz-row-sum-condition}.

\medskip

We now prove the converse implication. The argument is similar. For $n\ge2$, set
\begin{equation*}
  S_n(x)
  :=
  \sum_{p\le n}\gamma^p(x)
  =
  \pi(n)A_n(x).
\end{equation*}
Applying discrete Abel summation with the weight $\log n$ gives
\begin{equation}\label{eq:BN-from-AN-Abel}
\begin{aligned}
  \sum_{p\le N}(\log p)\gamma^p(x)
  &=
  (\log N)S_N(x)
  -
  \sum_{n=2}^{N-1}
  S_n(x)
  \bigl(\log(n+1)-\log n\bigr)\\
  &=
  \pi(N)(\log N)A_N(x)
  -
  \sum_{n=2}^{N-1}
  \pi(n)
  \bigl(\log(n+1)-\log n\bigr)A_n(x).
\end{aligned}
\end{equation}
After dividing by $N$, we obtain
\begin{equation}\label{eq:BN-as-Toeplitz-transform-of-AN}
  B_N(x)
  =
  \sum_{n=2}^N\beta_{N,n}A_n(x),
\end{equation}
where
\begin{equation}\label{eq:beta-coefficients}
  \beta_{N,n}
  :=
  \begin{cases}
    \displaystyle
    -\frac{\pi(n)}{N}
    \bigl(\log(n+1)-\log n\bigr),
      & 2\le n<N,\\[1.2em]
    \displaystyle
    \frac{\pi(N)\log N}{N},
      & n=N,\\[0.8em]
    0,
      & n>N.
  \end{cases}
\end{equation}
Again, we verify the hypotheses of \Cref{lem:bau-toeplitz}.

For every fixed $n\ge2$, once $N>n$,
\[
  |\beta_{N,n}|
  =
  \frac{\pi(n)}{N}
  \bigl(\log(n+1)-\log n\bigr),
\]
so
\begin{equation}\label{eq:beta-fixed-column-limit}
  \lim_{N\to\infty}\beta_{N,n}=0.
\end{equation}

Applying \eqref{eq:BN-from-AN-Abel} to the constant scalar sequence
$A_n=1$, or simply repeating the same calculation, gives
\begin{equation*}
  \sum_{n=2}^N\beta_{N,n}
  =
  \frac{1}{N}
  \sum_{p\le N}\log p
  =
  \frac{\vartheta(N)}{N}.
\end{equation*}
 Therefore, \eqref{eq:PNT-pi-form} gives
\begin{equation}\label{eq:beta-row-sum-limit}
  \lim_{N\to\infty}\sum_{n=2}^N\beta_{N,n}
  =
  \lim_{N\to\infty}\frac{\vartheta(N)}{N}
  =1.
\end{equation}

It remains to verify the uniform bound for the absolute row sums. By
\eqref{eq:beta-coefficients},
\begin{equation*}
\begin{aligned}
  \sum_{n=2}^N|\beta_{N,n}|
  &=
  \frac{\pi(N)\log N}{N}
  +
  \frac{1}{N}
  \sum_{n=2}^{N-1}
  \pi(n)\bigl(\log(n+1)-\log n\bigr).
\end{aligned}
\end{equation*}
Moreover, by the upper bound in \eqref{eq:chebyshev-pi-bounds},
\begin{equation*}
  \pi(n)\le C\frac{n}{\log n},
  \qquad n\ge2,
\end{equation*}
and
\begin{equation*}
  \log(n+1)-\log n
  =
  \log\left(1+\frac{1}{n}\right)
  \le
  \frac{1}{n}.
\end{equation*}
Consequently,
\begin{equation*}
\begin{aligned}
  \sum_{n=2}^N|\beta_{N,n}|
  &\le
  C
  +
  \frac{C}{N}
  \sum_{n=2}^{N-1}\frac{1}{\log n}\\
  &\le
  C,
\end{aligned}
\end{equation*}
where in the last step we used \eqref{eq:reciprocal-log-estimates}. Hence
\begin{equation}\label{eq:beta-absolute-row-bound}
  \sup_{N\ge2}
  \sum_{n=2}^N|\beta_{N,n}|
  <\infty.
\end{equation}

Suppose that
\[
  A_N(x)\longrightarrow y
  \qquad\text{b.a.u.}
\]
Then \eqref{eq:BN-as-Toeplitz-transform-of-AN}, together with
\eqref{eq:beta-fixed-column-limit},
\eqref{eq:beta-row-sum-limit},
\eqref{eq:beta-absolute-row-bound}, and
\Cref{lem:bau-toeplitz}, gives
\begin{equation*}
  B_N(x)\longrightarrow y
  \qquad\text{b.a.u.}
\end{equation*}
This proves both implications and completes the proof.
\end{proof}

{\it Declaration on the use of generative AI}: During the preparation of this manuscript, the authors used 
GPT-5.6 Pro  only to assist with English-language
editing, the organization and presentation of the manuscript,
\LaTeX{} formatting, and preliminary checks of mathematical
consistency. All mathematical ideas, results, arguments, and proofs
were developed and independently verified by the authors. The authors
carefully reviewed and edited all AI-assisted suggestions and take full
responsibility for the accuracy, originality, and content of the
manuscript.

\end{document}